\newcommand\restr[2]{{
		\left.\kern-\nulldelimiterspace 
		#1 
		\right|_{#2} 
}}
\numberwithin{equation}{section}
\newtheorem{teo}{Theorem}[section]
\newtheorem{lema}[teo]{Lemma}
\newtheorem{prop}[teo]{Proposition}
\newtheorem{coro}[teo]{Corollary}
\newtheorem{defi}[teo]{Definition}
\newtheorem{obs}[teo]{Remark}
\def\Xint#1{\mathchoice
	{\XXint\displaystyle\textstyle{#1}}%
	{\XXint\textstyle\scriptstyle{#1}}%
	{\XXint\scriptstyle\scriptscriptstyle{#1}}%
	{\XXint\scriptscriptstyle\scriptscriptstyle{#1}}%
	\!\int}
\def\XXint#1#2#3{{\setbox0=\hbox{$#1{#2#3}{\int}$}
		\vcenter{\hbox{$#2#3$}}\kern-.5\wd0}}
\def\dashint{\Xint-}
\title{Isosystolic Inequalities for Holomorphic Chains in $\mathbb{C}P^{\MakeLowercase{n}}$}
\author{Luciano L. Junior}
\address{Università di Trento, Dipartimento di Matematica, via Sommarive 14, 38123 Trento, Italy}
\email{luciano.luzzijunior@unitn.it} 
\begin{document}
	
	\begin{abstract}
		We introduce the \emph{holomorphic \(k\)-systole} of a Hermitian metric on \(\mathbb{C}P^n\), defined as the infimum of areas of homologically non-trivial holomorphic \(k\)-chains. Our main result establishes that, within the set of Gauduchon metrics, the Fubini-Study metric locally minimizes the volume-normalized holomorphic \((n-1)\)-systole. As an application, we construct Gauduchon metrics on \(\mathbb{C}P^2\) arbitrarily close to the Fubini-Study metric whose homological \(2\)-systole is realized by non-holomorphic chains.         
	\end{abstract}
	
	\maketitle
	
	\setcounter{tocdepth}{1}
	\tableofcontents
	
	\section{Introduction}
	
	In complex differential geometry, an interesting question is to identify sufficient conditions under which a minimal stable submanifold of a compact complex manifold must be holomorphic. More generally, to establish when locally mass-minimizing currents are holomorphic chains—that is, currents defined as finite sums of irreducible holomorphic subvarieties with integer multiplicities.  
	
	 A classical instance of this problem is present in the work of B. Lawson and J. Simon \cite{Lawson_Simon_73}, where they prove that in the complex projective space $(\mathbb{C}P^n,J,g_{FS})$ endowed with the canonical complex structure and the Fubini-Study metric, every closed stable $2k$-current is necessarily a holomorphic chain. Later, J. King and R. Harvey-B. Shiffman completely characterize homologically volume-minimizing locally rectifiable currents as (positive) holomorphic chains in closed Kähler manifolds, provided the homology class defined by the current admits a (positive) holomorphic chain as representative \cite[Theorem 2.3]{Harvey_Shiffman74}. See also~\cite{Alexander97} for a further generalization of the aforementioned characterization. It is also noteworthy that non-holomorphic stable minimal surfaces representing $(1,1)$-classes can be constructed in Kähler-Einstein manifolds \cite{Arezzo00}.    
	 
	 Therefore is natural to inquire which previous results generalize when weakening the K\"ahler condition, while imposing stricter geometric constraints. To approach this problem, we introduce the \emph{holomorphic \(k\)-systole}---defined as the infimum of the area among homologically non-trivial holomorphic \(k\)-chains---and analyze its behavior relative to the classical \emph{(homology) \(2k\)-systole}---i.e., the infimum of the area among all homologically non-trivial \(2k\)-chains. Our analysis focuses explicitly on Hermitian metrics of \((\mathbb{C}P^n, J)\) with its fixed canonical complex structure, and on the examination of codimension $2$ cycles. 
	 
	Namely, letting $\mathscr{H}(\mathbb{C}P^n)$ represent the set of Hermitian metrics on $(\mathbb{C}P^n,J)$, we denote the holomorphic $(n-1)$-systole of $g \in \mathscr{H}(\mathbb{C}P^n)$ by:  
	\begin{equation}\label{holosystole}  
		\mathrm{Sys}^{\mathrm{Hol}}_{n-1}(g) =\inf \left\{ \mathrm{Area}_g(C) : C \text{ is a non-trivial holomorphic } (n-1)\text{-chain} \right\}.  
	\end{equation}  
	We also introduce the \emph{normalized holomorphic $(n-1)$-systole}:     
	\begin{equation}\label{rhofunctional}
		\rho(g)\doteq \frac{\mathrm{Sys}^{\mathrm{Hol}}_{n-1}(g)}{\mathrm{Vol}(g)^{\frac{n-1}{n}}}.
	\end{equation}    
	
	The analysis of the holomorphic systole will be through variational methods applied to the scale-invariant functional $\rho$ on $\mathscr{H}(\mathbb{C}P^n)$. To overcome the non-smoothness of systolic functionals, we construct a family of smooth (upper) support functions via the \emph{family of equators}, i. e., the family of complex codimension $1$ linear complex projective subspaces of $\mathbb{C}P^n$. This smoothness enables first- and second-order variational analysis, as done in~\cite{junior2023balanced}.
	
	Following~\cite{lucas_coda_andre}, we identify the first variation of our support functions with the \emph{Radon Transform} associated with the family of equators (see Proposition~\ref{FVFofMsigma}). Hence, via the application of special integral geometric formulas and a first-order Taylor expansion we establish that $1$-parameter perturbations of the Fubini-Study metric transverse to the kernel of this operator have strictly decreasing normalized holomorphic $(n-1)$-systole (see Corollary~\ref{firstorderdesc}). A key observation is that we can characterize this kernel as the tangent directions of \emph{Gauduchon metrics}—namely, Hermitian metrics $g\in \mathscr{H}(\mathbb{C}P^n)$ whose fundamental form $\omega_g$ satisfies $dd^c\omega_g^{n-1}=0$, with $d^c \doteq J^{-1}dJ$ denoting the complex-twisted exterior derivative. We denote the space of such metrics by $\mathscr{G}(\mathbb{C}P^n)$.       
	
	This observation imposes second-order analysis of $\rho$ restricted to $\mathscr{G}(\mathbb{C}P^n)$ in order to characterize the holomorphic systole's behavior of all Hermitian metrics around the Fubini-Study metric. The Gauduchon equation enables explicit description of the holomorphic systole through the $dd^c$-Lemma, bypassing support functions and leading to the following conclusions. The space $\mathscr{K}(\mathbb{C}P^n)$ of Kähler metrics is contained in the critical point set of $\rho$ restricted to the space of Gauduchon metrics, with semi-positive definite Hessians at each Kähler metric. Taylor's expansion theorem then yields our main result:          
	{
	\renewcommand{\theteo}{A}
	\begin{teo}\label{maintheoremA}
		Fix $n\geq 2$. There exists an open set $ \mathscr{K}(\mathbb{C}P^n) \subset \mathcal{U} \subset \mathscr{G}(\mathbb{C}P^n)$, in the ${C}^{3}$-topology, such that for every metric $g\in \mathcal{U}$, 
		$$\rho(g) \geq \rho(g_{FS}).$$
		Moreover, $g \in \mathcal{U}$ satisfies the equality if and only if $g \in \mathscr{K}(\mathbb{C}P^n)$.
	\end{teo}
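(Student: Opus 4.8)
The plan is to convert the systolic optimization into a genuinely smooth, finite-order variational problem on $\mathscr{G}(\mathbb{C}P^n)$ and then run a Morse--Bott argument along the critical submanifold $\mathscr{K}(\mathbb{C}P^n)$. First I would record the explicit shape of the functional on Gauduchon metrics. By Wirtinger's theorem the area of a holomorphic $(n-1)$-chain $C$ is $\frac{1}{(n-1)!}\int_C\omega_g^{n-1}$, and the Gauduchon condition $dd^c\omega_g^{n-1}=0$ makes $\omega_g^{n-1}$ define an Aeppli class. Since every holomorphic $(n-1)$-cycle carries a Bott--Chern class equal to an integer multiple $d\geq 1$ of the hyperplane class, the duality between Aeppli and Bott--Chern cohomology gives $\mathrm{Area}_g(C)=d\cdot A_g$ with $A_g=\frac{1}{(n-1)!}\int_H\omega_g^{n-1}$ for any hyperplane $H$. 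The triangle inequality applied to a chain $\sum m_iV_i$ then forces the infimum in \eqref{holosystole} to be attained at $d=1$, so $\mathrm{Sys}^{\mathrm{Hol}}_{n-1}(g)=A_g$ and $\rho(g)=A_g/\mathrm{Vol}(g)^{\frac{n-1}{n}}$ is a smooth functional on $\mathscr{G}(\mathbb{C}P^n)$, with the support-function machinery no longer needed.

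Next I would locate the critical points and verify that $\rho\equiv\rho(g_{FS})$ on $\mathscr{K}(\mathbb{C}P^n)$. For a Kähler form one has $[\omega]=c[\omega_{FS}]$ in $H^2(\mathbb{C}P^n;\mathbb{R})$, so $A_g$ and $\mathrm{Vol}(g)$ are cohomological and scale as $c^{n-1}$ and $c^{n}$; scale-invariance of $\rho$ forces $\rho(g)=\rho(g_{FS})$. Differentiating $\log\rho=\log A_g-\frac{n-1}{n}\log\mathrm{Vol}(g)$ along a Gauduchon path $\omega_t=\omega_{FS}+t\eta+\ldots$, the linearized constraint $dd^c(\omega_{FS}^{n-2}\wedge\eta)=0$ makes $\int_H\omega_{FS}^{n-2}\wedge\eta$ independent of $H$; the Crofton/Radon formula of Proposition~\ref{FVFofMsigma} identifies this constant with the volume term, so the first variation vanishes and every Kähler metric—in particular $g_{FS}$—is critical for $\rho|_{\mathscr{G}}$, consistently with Corollary~\ref{firstorderdesc}.

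The heart of the argument is the second variation. Writing $P(t)=\log\!\big(\rho(g_t)/\rho(g_{FS})\big)$ I would expand $P''(0)=(\lambda_2-\lambda_1^2)-\frac{n-1}{n}(v_2-v_1^2)$, where $\lambda_i,v_i$ are the Taylor coefficients of $A_g$ and $\mathrm{Vol}(g)$. The second-order Gauduchon constraint $dd^c\big((n-1)\omega_{FS}^{n-2}\wedge\xi+(n-1)(n-2)\omega_{FS}^{n-3}\wedge\eta\wedge\eta\big)=0$ is used to eliminate the acceleration $\xi$, leaving a quadratic form $Q(\eta)$ in the $(1,1)$-form $\eta$ alone. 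The goal is to integrate by parts against $g_{FS}$ until $Q(\eta)$ appears as a manifestly non-negative integral of $\int|d\eta|^2$-type, vanishing exactly when $d\eta=0$, i.e. when $\eta$ is tangent to $\mathscr{K}(\mathbb{C}P^n)$. I expect this to be the main obstacle: organizing the several wedge-power terms together with the Gauduchon substitution into a perfect square, and proving that the null space of $Q$ is \emph{exactly} the Kähler directions rather than merely containing them.

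Finally I would upgrade this pointwise positivity to the local statement. Since $\rho$ is constant on $\mathscr{K}(\mathbb{C}P^n)$ and $Q$ is positive on a complement of $T\mathscr{K}$, the configuration is Morse--Bott with $\mathscr{K}$ as nondegenerate critical set. Choosing a normal slice and invoking Taylor's theorem yields $\rho(g)-\rho(g_{FS})\geq \tfrac12 Q(\eta_\perp)-C\|\eta_\perp\|^3$ for the transverse component $\eta_\perp$. The remaining technical point is coercivity: one must secure $Q(\eta_\perp)\geq c\|\eta_\perp\|^2$ in a norm controlling the cubic remainder, and this is precisely where the $C^3$-topology enters, combined with elliptic estimates for the operator $d$ appearing in $Q$. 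Once coercivity holds on a $C^3$-neighborhood $\mathcal{U}$, the inequality $\rho\geq\rho(g_{FS})$ follows, with equality forcing $\eta_\perp=0$ and hence $g\in\mathscr{K}(\mathbb{C}P^n)$.
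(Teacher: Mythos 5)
Your overall skeleton matches the paper's: on Gauduchon metrics the $dd^c$-Lemma (equivalently, Aeppli--Bott--Chern duality) makes $\rho$ the smooth cohomological functional $A_g/\mathrm{Vol}(g)^{\frac{n-1}{n}}$ (this is Proposition~\ref{systolexM,MsigmaonGaud}), Kähler metrics form a critical locus, and one then wants a Morse--Bott/Taylor argument transverse to $\mathscr{K}(\mathbb{C}P^n)$. However, there are two genuine gaps, and they are exactly the two places where the paper has to work hardest.

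First, the second variation. You defer the positivity of the Hessian with the hope of organizing it into a ``perfect square'' of $\int|d\eta|^2$-type, and you correctly flag this as the main obstacle --- but as stated this plan would not work, because at the level of forms the quadratic term $\left(\int\eta\wedge\omega^{n-1}\right)^2-\int\eta\wedge\eta\wedge\omega^{n-2}$ is governed by the Hodge--Riemann pairing, which has \emph{mixed} signature on $(1,1)$-forms: it is positive on primitive directions and negative on the $\omega$-direction, so no pointwise perfect-square identity in $d\eta$ can exist. What the paper does instead is structural: it proves the splitting $T_{\omega}\mathcal{G}^{2,\nu}=T_{\omega}\mathcal{K}^{2,\nu}\oplus\mathcal{C}_\omega\oplus\mathcal{D}_\omega$ (Theorem~\ref{T_omegaG} and Corollary~\ref{decompTGinKandB}, which themselves need the Kähler identities, Hodge theory and a gauge-fixing step $\beta\mapsto\beta+d^c\phi$), shows this splitting is $\mathcal{Q}_\omega$-orthogonal, and only then applies the Riemann--Hodge bilinear relations \emph{separately} on $\mathcal{C}_\omega$ and $\mathcal{D}_\omega$, after checking that the relevant forms are primitive ($\Lambda_g\xi=0$), to get $\mathcal{Q}_\omega(\xi,\xi)=(n-2)!\,\|\xi\|_{L^2}^2$ and kernel exactly $T_\omega\mathcal{K}^{2,\nu}$ (Proposition~\ref{QonVandW}). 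Without this decomposition your quadratic form $Q$ is not even visibly semi-positive, let alone with identified kernel; this is the heart of the proof and it is missing from your proposal.

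Second, your closing Taylor step is set up in a way that fails. You propose $\rho(g)-\rho(g_{FS})\geq\tfrac12 Q(\eta_\perp)-C\|\eta_\perp\|^3$ and then ask for coercivity $Q(\eta_\perp)\geq c\|\eta_\perp\|^2$ ``in a norm controlling the cubic remainder.'' No such norm exists: $Q$ is a zeroth-order ($L^2$-type) quadratic form, so it can only be coercive in $L^2$, while the cubic remainder of a Taylor expansion on the Banach manifold is controlled by the $C^{2,\nu}$ (or $C^3$) norm; since $\|\eta\|_{L^2}$ can be arbitrarily small compared with $\|\eta\|_{C^3}$, the inequality $\tfrac12 Q(\eta_\perp)\geq C\|\eta_\perp\|_{C^3}^3$ cannot hold on any $C^3$-neighborhood. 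The paper states this obstruction explicitly (the Hessian ``lacks coercivity in the $C^{2,\nu}$-topology'') and circumvents it by avoiding a cubic remainder altogether: it uses Taylor's theorem with \emph{Lagrange} remainder, i.e.\ the exact second derivative at an intermediate point $(\theta,c\eta)$, together with Lemma~\ref{L2boundofHess}, which shows that throughout a $C^{2,\nu}$-neighborhood the Hessian differs from the model Hessian at $\omega$ by at most $\tfrac12(n!)^{\frac{n-1}{n}}\|\eta\|_{L^2}^2$. This $L^2$-Lipschitz control of the Hessian, not coercivity in a strong norm, is what makes the Morse--Bott argument close; your proposal as written stalls precisely here.
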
       
	}
	As a consequence of our local description of the normalized holomorphic $(n-1)$-systole, we are able to demonstrate that the result of J. King and R. Harvey--B. Shiffman no longer holds when the K\"ahler condition is relaxed to Gauduchon in $(\mathbb{C}P^2,J)$. Moreover, employing M. Gromov and M. Berger's characterization of the Fubini-Study metric as a local maximum for the normalized $2$-systole in $\mathbb{C}P^2$ (see~\cite[\S 0.2.B]{gromov_pshol}), we establish the following local converse to the aforementioned theorem.     
	{
	\renewcommand{\theteo}{B}
	\begin{coro}\label{maintheoremB}
		There exists a $C^{\infty}$-neighborhood $\mathcal{U} \subset \mathscr{G}(\mathbb{C}P^2)$ of the Fubini-Study metric with the following property: given $g \in \mathcal{U}$, if any $2$-chain realizing the $2$-systole of $g$ is holomorphic, then $g$ is Kähler. In particular, for every sufficiently small $C^{\infty}$-neighborhood of the Fubini-Study metric in $\mathscr{G}(\mathbb{C}P^n)$, there exists a Gauduchon metric whose $2$-systole is realized by a non-holomorphic chain.      
	\end{coro}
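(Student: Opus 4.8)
The plan is to sandwich $\rho$ between two functionals, one of which is minimized and the other maximized at $g_{FS}$, and to use that a holomorphic systole realizer forces the two to coincide. Write $\mathrm{Sys}_2(g)$ for the homology $2$-systole of $g$ and set $\sigma(g)\doteq \mathrm{Sys}_2(g)/\mathrm{Vol}(g)^{1/2}$ for its volume-normalization (the exponent is $\tfrac{n-1}{n}=\tfrac12$ for $n=2$). Since every non-trivial holomorphic $1$-chain is in particular a non-trivial $2$-chain, the admissible family for the holomorphic systole is contained in the homological one, and passing to the infimum over the smaller family only increases the value; hence $\mathrm{Sys}_2(g)\le \mathrm{Sys}^{\mathrm{Hol}}_1(g)$ and therefore $\sigma(g)\le \rho(g)$ for every $g\in\mathscr{G}(\mathbb{C}P^2)$. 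At $g_{FS}$ equality holds: because $g_{FS}$ is Kähler, Wirtinger's inequality calibrates holomorphic curves, so the area-minimizer in the generator of $H_2(\mathbb{C}P^2;\mathbb{Z})$ is a holomorphic line, giving $\sigma(g_{FS})=\rho(g_{FS})$.

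First I would fix $\mathcal{U}$ to be a $C^{\infty}$-neighborhood of $g_{FS}$ in $\mathscr{G}(\mathbb{C}P^2)$ small enough to lie inside both the $C^{3}$-neighborhood furnished by Theorem~\ref{maintheoremA} (taken with $n=2$) and the neighborhood on which the Gromov--Berger characterization \cite{gromov_pshol} gives the local maximality $\sigma(g)\le\sigma(g_{FS})$; this is possible since the $C^{\infty}$-topology refines the $C^{3}$-topology. Now suppose $g\in\mathcal{U}$ and that the $2$-systole of $g$ is realized by a holomorphic chain $C$. Then $C$ is a non-trivial holomorphic $1$-chain, hence an admissible competitor for the holomorphic systole, so $\mathrm{Sys}^{\mathrm{Hol}}_1(g)\le \mathrm{Area}_g(C)=\mathrm{Sys}_2(g)$; combined with the reverse inequality above this yields $\sigma(g)=\rho(g)$. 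Chaining the two extremal estimates gives
\[
\rho(g_{FS})\le \rho(g)=\sigma(g)\le \sigma(g_{FS})=\rho(g_{FS}),
\]
so every inequality is an equality; in particular $\rho(g)=\rho(g_{FS})$, and the equality clause of Theorem~\ref{maintheoremA} forces $g$ to be Kähler. This proves the first assertion.

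For the \emph{in particular} statement I would argue by contraposition together with a non-rigidity observation. The $2$-systole of a Hermitian metric on the compact four-manifold $\mathbb{C}P^2$ is attained by an integral area-minimizing current in the generating homology class, by the compactness and lower-semicontinuity theorems of geometric measure theory, so a realizing $2$-chain always exists. Given any $C^{\infty}$-neighborhood $\mathcal{V}\subset\mathscr{G}(\mathbb{C}P^2)$ of $g_{FS}$, shrink it to lie inside $\mathcal{U}$. For $n=2$ the Gauduchon equation reads $dd^{c}\omega_g=0$, a single top-degree scalar condition, while the Kähler condition $d\omega_g=0$ is vastly more restrictive; consequently non-Kähler Gauduchon metrics exist arbitrarily $C^{\infty}$-close to $g_{FS}$, and by the strict inequality $\rho(g)>\rho(g_{FS})$ off $\mathscr{K}(\mathbb{C}P^2)$ in Theorem~\ref{maintheoremA} such metrics are genuinely transverse to the Kähler cone. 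Picking $g\in\mathcal{V}$ Gauduchon but not Kähler, the contrapositive of the first assertion shows that no realizing $2$-chain of $g$ can be holomorphic; since a realizing chain exists, it is non-holomorphic.

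The main obstacle is making the two inputs genuinely compatible: one must verify that the functional appearing in the Gromov--Berger local-maximum result is exactly the normalized homology $2$-systole $\sigma$ used here, and that its minimizer exists as a bona fide chain—this is where the regularity and compactness theory of mass-minimizing currents in dimension four is invoked—so that ``realized by a holomorphic chain'' makes unambiguous sense. The remaining delicate point is the production of non-Kähler Gauduchon metrics in every $C^{\infty}$-neighborhood of $g_{FS}$, i.e.\ that the directions transverse to $\mathscr{K}(\mathbb{C}P^2)$ inside $\mathscr{G}(\mathbb{C}P^2)$ are non-empty near $g_{FS}$; this follows from the $dd^c$-Lemma description of the Gauduchon equation already used to establish Theorem~\ref{maintheoremA}. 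Once these two facts are in place, the sandwich argument closes immediately.
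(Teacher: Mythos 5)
Your proposal is correct and follows essentially the same route as the paper: the identical sandwich of inequalities combining Theorem~\ref{maintheoremA} (local minimality and rigidity of $\rho$ on Gauduchon metrics) with the Gromov--Berger local maximality of the normalized $2$-systole, plus the observation that a holomorphic realizer forces $\mathrm{Sys}^{\mathrm{Hol}}_{1}(g)\leq \mathrm{Sys}_{2}(g)$ while $\rho(g_{FS})=\mathrm{Sys}_2(g_{FS})/\mathrm{Vol}(g_{FS})^{1/2}$. The only divergence is in the \emph{in particular} clause, where the paper explicitly exhibits non-Kähler Gauduchon metrics as $g_t = g_{FS} + t\,\eta(J\cdot,\cdot)$ with $\eta = dJ\beta + d^c\beta$, $\delta\beta=\delta^c\beta=0$ (via Corollary~\ref{decompTGinKandB}, using that for $n=2$ the Gauduchon condition is linear in $\omega$), whereas you assert their existence through a dimension-count heuristic; the assertion is true, but it should be justified by that decomposition rather than by informal counting.
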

	} 
	Another significant manifestation of our central question occurs in the context of the duality between stable minimal immersions and holomorphic maps, as presented in a classical theorem of S.-T. Yau and Y.-T. Siu \cite{Yau_Siu1980}, which states: if $(M^{2n},J,g)$ is a K\"ahler manifold with positive holomorphic bisectional curvature, then every conformal branched immersion $f: \mathbb{C}P^1 \to M$ that is minimal and stable is either holomorphic or anti-holomorphic. Therefore, our ability to select Gauduchon metrics with positive sectional curvature, implicitly established in our previous results, leads to the fact that this result cannot be extended to the Gauduchon setting.  
	{
	\renewcommand{\theteo}{C}	 
	\begin{coro}\label{maintheoremC}
		For every sufficiently small $C^{\infty}$-neighborhood of the Fubini-Study metric in $\mathscr{G}(\mathbb{C}P^2)$, there exist a Gauduchon metric $g$ with positive sectional curvature and a minimal stable conformal branched immersion $f_g: \mathbb{C}P^1 \to (\mathbb{C}P^2,g)$ that is neither holomorphic or anti-holomorphic.     
	\end{coro}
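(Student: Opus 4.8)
The plan is to realize the desired immersion as a conformal parametrization of a cycle realizing the $2$-systole of a carefully chosen Gauduchon metric, and to combine this with the openness of the positive-curvature condition. First I would fix a $C^{\infty}$-neighborhood $\mathcal{U}\subset\mathscr{G}(\mathbb{C}P^2)$ of $g_{FS}$ small enough that both Corollary~\ref{maintheoremB} applies and every $g\in\mathcal{U}$ has positive sectional curvature; the latter is possible because $g_{FS}$ has strictly positive sectional curvature and positivity of the sectional curvature is an open condition in the $C^{2}$-topology, hence a fortiori in the $C^{\infty}$-topology. Since $\mathscr{K}(\mathbb{C}P^2)\subsetneq\mathscr{G}(\mathbb{C}P^2)$, Theorem~\ref{maintheoremA} guarantees that $\mathcal{U}$ contains Gauduchon metrics that are not Kähler; I would fix one such $g$.

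By Corollary~\ref{maintheoremB}, every $2$-chain realizing the homology $2$-systole of this non-Kähler metric $g$ is non-holomorphic. I would then select an area-minimizing integral current $T$ in the positive generator $[\mathbb{C}P^1]\in H_2(\mathbb{C}P^2;\mathbb{Z})$: such a minimizer exists by the Federer--Fleming compactness theorem together with lower semicontinuity of mass, it realizes the $2$-systole of $g$, and by the interior regularity theory for two-dimensional area-minimizing currents its support is a smooth minimal surface away from finitely many branch points. Being mass-minimizing in its homology class, $T$ is in particular stationary and stable. The remaining point is that $T$ should be taken to be a branched sphere: letting $g\to g_{FS}$ inside $\mathcal{U}$, the minimizers have area converging to that of a Fubini--Study line and subconverge in the flat topology to an area-minimizer for $g_{FS}$, which by the Wirtinger calibration is a linear $\mathbb{C}P^1$ (cf.~\cite{Lawson_Simon_73,Harvey_Shiffman74}); for $g$ sufficiently close to $g_{FS}$ this forces $\operatorname{spt}(T)$ to be connected, of multiplicity one, and of genus $0$. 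Parametrizing it conformally then yields a minimal stable conformal branched immersion $f_g\colon\mathbb{C}P^1\to(\mathbb{C}P^2,g)$.

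It remains to verify that $f_g$ is neither holomorphic nor anti-holomorphic. It is not holomorphic because $T$ is not a holomorphic chain. It is not anti-holomorphic either: precomposing an anti-holomorphic map with the conjugation $[z_0:z_1]\mapsto[\bar z_0:\bar z_1]$ of $\mathbb{C}P^1$ produces a holomorphic map with the same image, so an anti-holomorphic $f_g$ would have a holomorphic subvariety as image, making $T$ a positive holomorphic chain and contradicting Corollary~\ref{maintheoremB}. This produces a Gauduchon metric with positive sectional curvature together with a minimal stable conformal branched immersion of $\mathbb{C}P^1$ that is neither holomorphic nor anti-holomorphic, so the Siu--Yau dichotomy \cite{Yau_Siu1980} fails once the Kähler hypothesis is relaxed to Gauduchon, even under a positive-curvature assumption playing the role of positive holomorphic bisectional curvature.

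The step I expect to be the main obstacle is controlling the topological type of the systole realizer, i.e. showing that $T$ can be taken to be a branched immersion of $\mathbb{C}P^1$ rather than of a higher-genus surface, or a disconnected or higher-multiplicity current. Since the Gauduchon metrics here need not be symplectic, the symplectic adjunction inequality is unavailable, so I would instead rely on the compactness and regularity theory for area-minimizing $2$-currents together with the $C^{\infty}$-closeness to $g_{FS}$: the minimizer is flat-close to an embedded round $\mathbb{C}P^1$, and the regularity theory should upgrade this to genus-$0$ smooth (branched) convergence. Making this nearness argument quantitative --- ruling out genus or multiplicity jumps in the limit --- is the delicate part; the curvature positivity and the determination of the holomorphic type of $f_g$ are then comparatively routine.
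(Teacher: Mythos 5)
Your strategy is genuinely different from the paper's, and it is viable in outline, but as written it has a real gap at precisely the step you flag. For comparison: the paper never touches geometric measure theory. It takes Gromov's embedded sphere $j\colon \mathbb{C}P^1\to\mathbb{C}P^2$, homologous to a linear equator and satisfying $\mathrm{Area}(\mathbb{C}P^1,j^*g)/\mathrm{Vol}(g)^{1/2}\le \mathrm{Sys}_2(g_{FS})/\mathrm{Vol}(g_{FS})^{1/2}$ for every metric in a $C^{\infty}$-neighborhood of $g_{FS}$ (\cite[\S4.A.2]{gromov_systole}), notes via Hurewicz that $j$ generates $\pi_2(\mathbb{C}P^2)$, and applies Sacks--Uhlenbeck \cite[Theorem 5.9]{Sacks_Uhlenbeck81} to get a stable minimal conformal branched immersion $f_g$ of $\mathbb{C}P^1$ with $\mathrm{Area}(\mathbb{C}P^1,f_g^*g)\le\mathrm{Area}(\mathbb{C}P^1,j^*g)$. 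Because the domain is a sphere by fiat, no topological control is ever needed. Non-(anti)holomorphicity is then a contradiction argument: the Proper Mapping Theorem would make $f_g(\mathbb{C}P^1)$ a holomorphic $1$-chain, so $\mathrm{Sys}_1^{\mathrm{Hol}}(g)\le\mathrm{Area}(\mathbb{C}P^1,f_g^*g)$, contradicting the strict inequality $\rho(g)>\rho(g_{FS})$ of Theorem~\ref{maintheoremA} for non-K\"ahler $g$. Your use of Corollary~\ref{maintheoremB} instead is legitimate and essentially equivalent at that stage.

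The gap in your route is the claim that the homological minimizer $T$ is a connected, multiplicity-one, genus-zero branched surface. Flat convergence to a Fubini--Study line, which is all you invoke, gives no control whatsoever on topology or multiplicity; homological area-minimizers in a class generated by spheres need not be spheres in general. What fills the gap is a specific mechanism, not generic ``regularity theory'': mass convergence plus minimality upgrades flat convergence to varifold convergence, and since the limit line is smooth, embedded, and of multiplicity one, Allard's regularity theorem (equivalently, White-type smooth compactness for area-minimizers under $C^{\infty}$ perturbations of the metric) shows that for $g$ close to $g_{FS}$ the support of $T$ is a graph over the line, hence an embedded $2$-sphere; without such an argument you would need Almgren--Chang interior regularity even to speak of finitely many branch points. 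This machinery exists, so your approach can be completed, but it is heavy, and avoiding it is exactly what the paper's combination of Gromov's embedded sphere with Sacks--Uhlenbeck buys. Two smaller points you should also close: (i) you must check that the minimizer in the generator class actually realizes $\mathrm{Sys}_2(g)$, i.e.\ rule out the classes $k[\mathbb{C}P^1]$ with $|k|\ge 2$; near $g_{FS}$ this follows by comparing with the Wirtinger bound, which forces mass at least roughly $|k|$ times the area of a line; (ii) stability of the parametrizing map $f_g$ should be deduced by observing that pushforwards of variations of the map are homologous competitor currents, so the map minimizes area among maps and is in particular stable --- stability of the current alone is not literally the statement you need.
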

	}

	Up to this point, Gauduchon metrics have functioned as a natural analytical framework for understanding systolic behavior on $\mathbb{C}P^n$ and studying obstructions to extending Kähler results. However, the general setting established by P. Gauduchon's seminal work~\cite{Gauduchon77}---which guarantees that every closed Hermitian manifold of complex dimension greater than one admits a unique (up to homothety) Gauduchon metric in its conformal class--- endows them with intrinsic value as a general tool in complex geometry. Therefore, it is natural to employ the work developed here to characterize geometric properties of $\mathscr{G}(\mathbb{C}P^n)$. This analysis is especially valuable in complex dimension 2, where dimensionality forces equivalences between classical non-Kähler conditions: Strong Kähler with Torsion~\cite{Bismut89}, Strong Gauduchon~\cite{Popovici13}, and Gauduchon metrics coincide, while the balanced condition~\cite{Michelsohn_1982} implies Kähler. For a ample discussion on non-Kähler geometry see~\cite{Verbitsky_Liviu24}.   
	
	Our first application draws parallels with the classification theory of \emph{generalized Zoll families} developed in~\cite{lucas_coda_andre, junior2023balanced}. For Gauduchon metrics, the $dd^c$-Lemma immediately yields constant area along the family of equators of the complex projective space. We prove via the theory developed to study the Radon transform and consequently the first-variation of our support function that this constancy characterizes the set $\mathscr{G}(\mathbb{C}P^n)$.     
	{
	\renewcommand{\theteo}{D}	
	\begin{prop}\label{maintheoremD}
		A Hermitian metric $g \in \mathscr{H}(\mathbb{C}P^n)$ is Gauduchon if and only if the function defined by the area along the family of equators is constant.
	\end{prop}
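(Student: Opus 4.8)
The plan is to realize the area-along-equators function as a Radon transform of $\omega_g^{n-1}$ and to translate the Gauduchon equation into a differential identity for that transform. Let $Z=\{(p,\sigma)\in\mathbb{C}P^n\times(\mathbb{C}P^n)^{\ast}:p\in H_\sigma\}$ be the incidence variety, with holomorphic projections $\pi\colon Z\to\mathbb{C}P^n$ and $\check\pi\colon Z\to(\mathbb{C}P^n)^{\ast}$ whose $\check\pi$-fibers are the equators $H_\sigma$. Writing $\Phi\doteq\omega_g^{n-1}$ and using Wirtinger's identity, the area function is
\[
A(\sigma)=\mathrm{Area}_g(H_\sigma)=\tfrac{1}{(n-1)!}\int_{H_\sigma}\Phi=\tfrac{1}{(n-1)!}\,(R\Phi)(\sigma),\qquad R\doteq\check\pi_{\ast}\pi^{\ast}.
\]
Since both projections are holomorphic and the fibers are compact and boundaryless, fiber integration commutes with $\partial$ and $\bar\partial$ separately and lowers bidegree by $(n-1,n-1)$; in particular $R$ sends the real $(n-1,n-1)$-form $\Phi$ to a real function on $(\mathbb{C}P^n)^{\ast}$ and intertwines $\partial,\bar\partial$ on $\mathbb{C}P^n$ with $\partial,\bar\partial$ on $(\mathbb{C}P^n)^{\ast}$.

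Next I would differentiate and use $dR\Phi=R(d\Phi)$ together with bidegree bookkeeping to isolate the $(1,0)$-part $\theta\doteq R(\partial\Phi)$ of $dR\Phi$, which satisfies
\[
\partial\theta=R(\partial\partial\Phi)=0,\qquad \bar\partial\theta=R(\bar\partial\partial\Phi)=-R(\partial\bar\partial\Phi),
\]
so that $\theta$ encodes precisely the Gauduchon obstruction $\partial\bar\partial\Phi$ (equivalently $dd^c\omega_g^{n-1}$). For the forward implication, if $g$ is Gauduchon then $\partial\bar\partial\Phi=0$, hence $\theta$ is a $d$-closed $(1,0)$-form on $(\mathbb{C}P^n)^{\ast}\cong\mathbb{C}P^n$; as $h^{1,0}(\mathbb{C}P^n)=0$ this forces $\theta=0$, whence $dR\Phi=\theta+\bar\theta=0$ and $A$ is constant. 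This is the concrete content of the remark that the $dd^c$-Lemma yields constant area: the transform $R\Phi$ is pluriharmonic, hence constant on the compact dual projective space.

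For the converse, suppose $A$—equivalently $R\Phi$—is constant. Then $dR\Phi=0$, so $\theta=0$ and therefore $R(\partial\bar\partial\Phi)=-\bar\partial\theta=0$; equivalently, the first variation of the support function attached to the family of equators vanishes identically. By the identification of this first variation with the Radon transform (Proposition~\ref{FVFofMsigma}), the vanishing says exactly that the Radon transform of the $(n,n)$-form $\partial\bar\partial\omega_g^{n-1}$ is zero, and one concludes $\partial\bar\partial\omega_g^{n-1}=0$, i.e.\ $g\in\mathscr{G}(\mathbb{C}P^n)$. The main obstacle is precisely this last step: it rests on the \emph{injectivity} of the hyperplane Radon transform on exact top-degree forms, since a priori a nonzero mean-zero density could lie in the kernel of the transform. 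I would therefore invoke the explicit integral-geometric (spectral) description of the Radon transform developed for the family of equators—the same machinery underlying Corollary~\ref{firstorderdesc}—to rule out any kernel on the image of $\partial\bar\partial$. Everything else is formal bidegree bookkeeping on the incidence variety together with the vanishing $h^{1,0}(\mathbb{C}P^n)=0$.
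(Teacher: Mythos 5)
Your forward implication is correct, and it takes a genuinely different route from the paper's: instead of the $dd^c$-Lemma plus Stokes' theorem over the closed equators, you exploit that $R=\check\pi_*\pi^*$ commutes with $\partial$ and $\bar\partial$ separately (both projections of the incidence variety being proper holomorphic submersions), so that $R\Phi$ is pluriharmonic and hence constant on the compact $\mathbb{C}P^n$. That half stands.

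The converse, however, has a genuine gap at exactly the point you flag, and the gap is the entire content of the statement. Because $R$ commutes with $\partial\bar\partial$, the identity $R(\partial\bar\partial\Phi)=\partial\bar\partial(R\Phi)=0$ is a tautological consequence of constancy and carries no information by itself; everything rests on the unproven claim that $R$ is injective on $\mathrm{Im}(\partial\bar\partial)$, i.e.\ on mean-zero $(n,n)$-forms (note also that this is a transform valued in $2$-forms on the parameter space, not a scalar transform of densities as your wording suggests). Invoking ``the machinery underlying Corollary~\ref{firstorderdesc}'' does not supply this: that machinery is Proposition~\ref{kernelofmeanRT}, which concerns the transform on $(2n-2)$-forms rather than top-degree forms, and which is precisely the lemma the paper proves in order to establish this proposition—so appealing to it as a black box is circular in spirit. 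The paper's actual argument is different: it uses the Hodge decomposition together with the Kähler identities to write $\omega_g^{n-1}=f\,\omega_{FS}^{n-1}+d\alpha+d^c\beta$ with $f$ a \emph{function}, kills $d\alpha$ and $d^c\beta$ by Stokes over the closed equators, and then applies the injectivity of the classical scalar Radon transform (Proposition~\ref{Radonker}, via Helgason's inversion formula) to conclude that $f$ is constant, hence $\omega_g^{n-1}$ is $dd^c$-closed. Your missing injectivity is in fact true and provable with tools already in the paper: writing $\partial\bar\partial\Phi=h\,\omega_{FS}^n$, the projection formula (as in Proposition~\ref{IGFfortensors}) gives, for every smooth function $u$ on the parameter space,
\[
0=\int_{\mathbb{C}P^n} R\bigl(h\,\omega_{FS}^n\bigr)\wedge u\,\omega_{FS}^{n-1}
= c\int_{\mathbb{C}P^n} h\,\mathcal{R}^*(u)\,\omega_{FS}^n, \qquad c>0,
\]
and the surjectivity of the dual transform $\mathcal{R}^*$ on smooth functions (again Helgason's inversion formula, the same input as Proposition~\ref{Radonker}) forces $h=0$. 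But some such argument must actually be written down; as it stands, your converse asserts its crucial step rather than proving it.
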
 
	}
	The previously mentioned Theorem of P. Gauduchon can be restated as the existence of a projection $\Pi_{\mathscr{G}}: \mathscr{H}(\mathbb{C}P^n) \to \mathscr{G}(\mathbb{C}P^n)$, assigning to each Hermitian metric the unique volume-normalized Gauduchon metric in its conformal class. As we point out in Section~\ref{section:Applications}, this projection is continuous in the $C^{\infty}$-topology. Meanwhile, in~\cite[\S4.A.2]{gromov_systole} M. Gromov provided a general panorama to manufacture isosystolic inequalities within conformal classes around the Fubini-Study metric. Therefore, combing his ideas with Theorem~\ref{maintheoremA} we prove the following.            
	{
	\renewcommand{\theteo}{E}	
	\begin{teo}\label{maintheoremE}
		There exists a $C^{\infty}$-neighborhood of the Fubini-Study metric $\mathcal{U} \subset \mathscr{G}(\mathbb{C}P^n)$, for which the following inequalities hold: 
		\begin{enumerate}[label=\alph*),ref=(\alph*)]
			\item\label{mainthrm2itema} $\rho(g) \leq \rho(\Pi_{\mathscr{G}}(g))$, for every $ g \in \Pi_{\mathscr{G}}^{-1}(\mathcal{U})$.
			\item\label{mainthrm2itemb} $\rho(\Pi_{\mathscr{G}}(g)) \geq \rho(g_{FS})$, for every $g \in \Pi_{\mathscr{G}}^{-1}(\mathcal{U})$.
		\end{enumerate}    
	\end{teo}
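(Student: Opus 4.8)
I would prove the two assertions by quite different means, and it is convenient to dispatch \ref{mainthrm2itemb} first. Choosing $\mathcal{U}$ inside the open set furnished by Theorem~\ref{maintheoremA} (recall that the $C^{3}$-topology is coarser than the $C^{\infty}$-topology, so every $C^{3}$-open set is $C^{\infty}$-open), the Gauduchon representative $g_{0}\doteq\Pi_{\mathscr{G}}(g)$ of any $g\in\Pi_{\mathscr{G}}^{-1}(\mathcal{U})$ lies in $\mathcal{U}$, whence $\rho(\Pi_{\mathscr{G}}(g))\geq\rho(g_{FS})$ is exactly Theorem~\ref{maintheoremA}. Since $\Pi_{\mathscr{G}}$ is continuous in the $C^{\infty}$-topology, $\Pi_{\mathscr{G}}^{-1}(\mathcal{U})$ is $C^{\infty}$-open, so the statement genuinely concerns a neighbourhood. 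It therefore remains to prove \ref{mainthrm2itema}, namely that within each conformal class the volume-normalized Gauduchon metric maximizes $\rho$; this is the step that follows the conformal scheme of~\cite[\S4.A.2]{gromov_systole}.

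Fix $g\in\Pi_{\mathscr{G}}^{-1}(\mathcal{U})$ and write $g=e^{2\varphi}g_{0}$ with $g_{0}=\Pi_{\mathscr{G}}(g)$; by scale invariance of $\rho$ I keep the normalization $V_{0}\doteq\mathrm{Vol}(g_{0})$ fixed. Letting $\{M_{\sigma}\}_{\sigma\in\check{\mathbb{C}P}^{n}}$ denote the family of equators, each $M_{\sigma}$ is a $g$-holomorphic $(n-1)$-cycle (the complex structure is fixed), hence a competitor for the holomorphic systole; since the area of $M_\sigma$ is $\int_{M_\sigma}\omega_g^{n-1}/(n-1)!$ by Wirtinger's theorem and $\omega_{g}^{n-1}=e^{2(n-1)\varphi}\omega_{g_{0}}^{n-1}$, we obtain
\begin{equation*}
\mathrm{Sys}^{\mathrm{Hol}}_{n-1}(g)\ \leq\ \min_{\sigma}\int_{M_{\sigma}}e^{2(n-1)\varphi}\,\frac{\omega_{g_{0}}^{n-1}}{(n-1)!}.
\end{equation*}
On the other hand, since $g_{0}$ is Gauduchon the $dd^{c}$-Lemma renders the area of every holomorphic $(n-1)$-cycle cohomological, so $\mathrm{Sys}^{\mathrm{Hol}}_{n-1}(g_{0})=S_{0}$ equals the common $g_{0}$-area of all equators (Proposition~\ref{maintheoremD}) and $\rho(g_{0})=S_{0}/V_{0}^{(n-1)/n}$. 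The plan is to bound the minimum above by averaging against a well-chosen probability measure: I will produce a probability measure $d\mu$ on $\check{\mathbb{C}P}^{n}$ whose \emph{backprojection} through the $g_{0}$-area densities is uniform, i.e.
\begin{equation}\label{E:backproj}
\int_{\check{\mathbb{C}P}^{n}}\!\Big(\int_{M_{\sigma}}\psi\,\frac{\omega_{g_{0}}^{n-1}}{(n-1)!}\Big)\,d\mu(\sigma)\ =\ \frac{S_{0}}{V_{0}}\int_{\mathbb{C}P^{n}}\psi\,\mathrm{dVol}_{g_{0}}\qquad\text{for all }\psi\in C^{\infty}(\mathbb{C}P^{n}).
\end{equation}
Granting \eqref{E:backproj} with $\psi=e^{2(n-1)\varphi}$ and using $\min_{\sigma}\leq\int(\cdot)\,d\mu$, Hölder's inequality with exponents $\tfrac{n}{n-1}$ and $n$ gives
\begin{equation*}
\mathrm{Sys}^{\mathrm{Hol}}_{n-1}(g)\ \leq\ \frac{S_{0}}{V_{0}}\int_{\mathbb{C}P^{n}}e^{2(n-1)\varphi}\,\mathrm{dVol}_{g_{0}}\ \leq\ \frac{S_{0}}{V_{0}}\Big(\int_{\mathbb{C}P^{n}}e^{2n\varphi}\,\mathrm{dVol}_{g_{0}}\Big)^{\frac{n-1}{n}}V_{0}^{\frac1n}\ =\ S_{0}\Big(\frac{\mathrm{Vol}(g)}{V_{0}}\Big)^{\frac{n-1}{n}},
\end{equation*}
which is precisely $\rho(g)\leq S_{0}/V_{0}^{(n-1)/n}=\rho(g_{0})$.

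The crux is thus the solvability of \eqref{E:backproj} by a \emph{positive} measure. Writing $\mathcal{R}_{g_{0}}\psi(\sigma)=\int_{M_{\sigma}}\psi\,\omega_{g_{0}}^{n-1}/(n-1)!$ for the $g_{0}$-weighted Radon transform attached to the family of equators---the operator appearing as the first variation of our support functions in Proposition~\ref{FVFofMsigma}---equation \eqref{E:backproj} reads $\mathcal{R}_{g_{0}}^{*}\mu=\tfrac{S_{0}}{V_{0}}\,\mathrm{dVol}_{g_{0}}$ for the transpose $\mathcal{R}_{g_{0}}^{*}$. For $g_{0}=g_{FS}$ the $PU(n+1)$-invariance of the equator family forces the invariant measure $d\mu_{FS}$ to solve this equation, the uniformity of the backprojection reflecting that no equator is privileged (Proposition~\ref{maintheoremD}). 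For $g_{0}$ in a sufficiently small neighbourhood of $g_{FS}$ I would solve \eqref{E:backproj} by perturbation off this model: $\mathcal{R}_{g_{0}}^{*}$ depends continuously on $g_{0}$, and by the mapping properties of the hyperplane Radon transform on $\mathbb{C}P^{n}$ developed here and in~\cite{lucas_coda_andre} the operator $\mathcal{R}_{g_{FS}}^{*}$ is onto the range relevant to~\eqref{E:backproj}; the implicit function theorem then furnishes a solution $\mu=\mu_{g_{0}}$ close to $d\mu_{FS}$, which stays a positive probability measure after normalization once $g_{0}$ is close enough to $g_{FS}$. Shrinking $\mathcal{U}$ to guarantee this positivity---and to keep $\mathcal{U}$ inside the set of Theorem~\ref{maintheoremA}---completes the proof.

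The main obstacle is this last step: showing that $\mathrm{dVol}_{g_{0}}$ lies in the range of $\mathcal{R}_{g_{0}}^{*}$ through a nonnegative measure, uniformly for $g_{0}$ near $g_{FS}$. Here the degeneracy supplied by Proposition~\ref{maintheoremD} is essential, for at $g_{FS}$ every equator simultaneously realizes the minimum, so the whole family is available to build the backprojection, which is exactly what makes the model equation solvable and stable under perturbation. I note finally that the inequality so obtained is not merely local in the conformal direction: setting $u=e^{2(n-1)\varphi}$, the numerator $\min_{\sigma}\int_{M_{\sigma}}u\,\tfrac{\omega_{g_{0}}^{n-1}}{(n-1)!}$ is concave and the denominator $\|u\|_{L^{n/(n-1)}(\mathrm{dVol}_{g_{0}})}$ is convex, both $1$-homogeneous, so \eqref{E:backproj} is precisely the first-order condition identifying $u\equiv\mathrm{const}$ (that is, $g$ homothetic to $g_{0}$) as the global maximizer of $\rho$ over the entire conformal class.
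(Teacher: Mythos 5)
Your reduction of the theorem coincides with the paper's own: item \ref{mainthrm2itemb} is read off from Theorem~\ref{maintheoremA} exactly as you do, and item \ref{mainthrm2itema} is reduced to producing, for each Gauduchon metric $g_{0}$ near $g_{FS}$, a positive measure on the parameter space whose backprojection through the equator family is a constant multiple of $d\mathrm{Vol}_{g_{0}}$; your subsequent chain (equators as holomorphic competitors, $\min_{\sigma}\leq$ average against the measure, then H\"older with exponents $\tfrac{n}{n-1}$ and $n$) is precisely the computation the paper imports from \cite[Theorem~A.2]{junior2023balanced}, and it is carried out correctly. The problem lies entirely in the step you yourself call the crux, and there your argument has a genuine gap.

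You propose to solve $\mathcal{R}^{*}_{g_{0}}\mu=\tfrac{S_{0}}{V_{0}}\,d\mathrm{Vol}_{g_{0}}$ by perturbing $\mathcal{R}^{*}_{g_{0}}$ directly off the Fubini--Study model: you assert that $\mathcal{R}^{*}_{g_{FS}}$ is ``onto the relevant range,'' that $g_{0}\mapsto\mathcal{R}^{*}_{g_{0}}$ is continuous, and that the implicit function theorem then yields a nearby positive solution. As stated this does not close. The surjectivity of $\mathcal{R}^{*}_{g_{FS}}$ that is actually available---from Helgason's inversion formula, as used in Proposition~\ref{Radonker}---is surjectivity on $C^{\infty}(\mathbb{C}P^{n})$, which is a Fr\'echet space; there neither the implicit function theorem nor the principle that surjectivity is stable under small perturbations applies, since both rest on the open mapping theorem in a Banach setting. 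To run your scheme one must fix a Sobolev scale on which $\mathcal{R}^{*}_{g_{0}}$ acts boundedly, prove surjectivity of $\mathcal{R}^{*}_{g_{FS}}$ \emph{on that scale}, prove operator-norm continuity in $g_{0}$, and select solutions continuously (mere surjectivity singles out no solution) in a space embedding into $C^{0}$, so that positivity survives the perturbation. None of this is off the shelf, and it is exactly where the paper puts its technical effort (Proposition~\ref{geneGIF}): it passes to the normal operator $\mathcal{T}_{g}=\mathcal{R}^{*}_{g}\circ\mathcal{R}_{g}$, which by Guillemin's theory \cite{GuilleminGeoAsym} is an elliptic pseudodifferential operator of order $2-2n$, hence Fredholm of index zero from $W^{s,2}$ to $W^{s+2n-2,2}$; Helgason's inversion formula gives injectivity of $\mathcal{T}_{g_{FS}}$, hence it is an isomorphism; Quinto's symbol computation \cite{Quinto81} gives operator-norm continuity of $g\mapsto\mathcal{T}_{g}$, hence surjectivity nearby; solving $\mathcal{T}_{g}\varphi_{g}=1$, elliptic regularity and the Sobolev embedding (with $s$ large) produce a smooth positive $\varphi_{g}$, and $\psi_{g}\doteq\mathcal{R}_{g}(\varphi_{g})$ then satisfies $\mathcal{R}^{*}_{g}(\psi_{g})=1$, giving the measure $\psi_{g}\,d\mathrm{Vol}_{g}$. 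Your sketch can be completed, but only by importing essentially this machinery; as written, the decisive analytic step is asserted rather than proved.
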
      
	}
	To conclude our introduction, we propose to the reader a possible research program for generalizing Corollary~\ref{maintheoremB} to higher dimensions. Note first that our proof deeply relies on understanding the behavior of the classical 2-systole for Gauduchon metrics, which in complex dimension~2 is covered by the Gromov-Berger theorem (see~\cite[\S 0.2.B]{gromov_pshol}, and also~\cite[Theorem A]{junior2023balanced})---a result admitting no direct generalization to higher dimensions. However, for \( g \in \mathscr{G}(\mathbb{C}P^n) \) (\( n \geq 3 \)), the vector field defined via duality by \( \mathrm{div}_g\,\omega_g \) provides simultaneous descent directions for all equators of \( \mathbb{C}P^n \), as it represents their mean vector field up to the action of \( J \). Thus, an application of the minimal submanifold construction via the inverse function theorem introduced by B.~White~\cite{BWhite91} (see also~\cite{lucas_coda_andre}), combined with this preferred choice of section for the normal bundle of the equators, may lead to a fruitful study of the systole of Gauduchon metrics in a neighborhood of the Fubini--Study metric and, consequently, to a generalization of our result.           
	
	\subsection*{Organization of the paper} The first part of Section~\ref{section:ClassificationofGauduchonMetrics} is devoted to setting up the notation and recalling all the tools of complex differential geometry that we will need. Later, we study a generalization of the classical Radon transform to higher-order tensors and, as a consequence, prove Proposition~\ref{maintheoremD}. In Section~\ref{section:VariationalFormulae}, we define our smooth support functions and derive the formulae for their first- and second-order variations. Once these formulas are established, Section~\ref{section:IsosystolicInequalities} proceeds to prove Theorem~\ref{maintheoremA} via Taylor expansion. Finally, in Section~\ref{section:Applications}, we use the results developed in Sections~\ref{section:VariationalFormulae} and~\ref{section:IsosystolicInequalities} to prove Corollaries~\ref{maintheoremB} and~\ref{maintheoremC} as well as Theorem~\ref{maintheoremE}.        
	\subsection*{Acknowledgements} This project has received funding from the European Research Council (ERC) under the European Union’s Horizon 2020 research and innovation programme (grant agreement No. 947923). The author is also grateful to Lucas Ambrozio and Alessandro Carlotto for enriching suggestions on earlier drafts of this paper.

	\section{Classification of Gauduchon Metrics}\label{section:ClassificationofGauduchonMetrics}
	
	As previously mentioned in the introduction, the objective of this section is two-folded, introduce and study the Radon transform and trough the developed theory characterize the set of Gauduchon metrics as stated in Proposition~\ref{maintheoremD}. The foundation for both objectives lays in the geometrical properties of the family of linear equators of the projective space, which is already know to hold interesting properties. In fact, in dimension $2$, it serves as a toy model for the compact moduli space of pseudo-holomorphic curves (see \cite{gromov_pshol}). Also, in higher dimensions, it serves as a Zoll family for balanced metrics (see \cite{junior2023balanced} and \cite{lucas_coda_andre}). In what follows, we elaborate on the structure of the family of linear equators. But first, we set up notation.               
	
	\subsection*{Preliminaries}  The inequalities in Theorem~\ref{maintheoremA} and \ref{maintheoremE} will be derived through applications of first- and second-order variational principles. To frame this analysis, we need to equip the space of Hermitian metrics on $\mathbb{C}P^n$ and its derivatives with a Banach manifold structure.
	
	First, recall that the canonical complex structure $J$ induces the identification:
	\begin{align}\label{formxmetrics}
		\begin{split}
			\mathcal{J}: \Omega^{1,1}_+(\mathbb{C}P^n) &\to \mathscr{H}(\mathbb{C}P^n) \\
			\omega &\mapsto g_\omega(\cdot,\cdot) \doteq \omega(\cdot,J\cdot),
		\end{split}     
	\end{align}
	where $\Omega^{p,p}_+(\mathbb{C}P^n)$ denotes the space of smooth sections of the open cone of \emph{positive} $(p,p)$-forms:
	\[
	\Lambda_+^{p,p} \doteq \left\{\alpha \in \Lambda_{\mathbb{R}}^{p,p} \,\bigg|\, 
	\begin{array}{ll}
		\alpha(v_1,\ldots,v_p,Jv_1,\ldots,Jv_p) > 0 \text{ for every} \\
		\text{linearly independent set } \{v_j,Jv_j\}_{j=1}^p
	\end{array}
	\right\}.
	\]
	Here, $\Lambda_{\mathbb{R}}^{p,p}$ denotes the bundle of real $(p,p)$-forms.
	
	By endowing $\Omega^{1,1}_{\mathbb{R}}(\mathbb{C}P^n)$ with the Hölder topology, we may view $\mathscr{H}(\mathbb{C}P^n)$ as an open subset of this space, thereby inheriting a natural structure of a Banach manifold.
	
	More precisely, let $\nu \in (0,1)$. For any vector bundle $E \to \mathbb{C}P^n$, we will denote by $C^{k,\nu}(E)$ the space of sections of $E$ with Hölder regularity $C^{k,\nu}$. It is well-known that $\left(C^{k,\nu}(E), \|\cdot\|_{C^{k,\nu}} \right)$ forms a Banach vector space. Consequently, defining $\mathscr{H}^{k,\nu}(\mathbb{C}P^n)$ as the space of Hermitian metrics with regularity $C^{k,\nu}$ with the topology provided of the inclusion on the space of symmetric tensors, the map in~\eqref{formxmetrics} extend to a homeomorphism:
	\[
	\mathcal{J}: C^{k,\nu}(\Lambda^{1,1}_+) \to \mathscr{H}^{k,\nu}(\mathbb{C}P^n).
	\]
	This equips the space of Hermitian metrics with a Banach manifold structure, inherited from the open set $C^{k,\nu}(\Lambda^{1,1}_+)$, of the Banach vector space $C^{k,\nu}(\Lambda^{1,1}_\mathbb{R})$. From this moment forward $\nu \in (0,1)$ will be fixed.         
	
	We also define the sets of Kähler, balanced and Gauduchon metrics with Hölder regularity: 
	\begin{align*}
		\mathscr{K}^{2,\nu}(\mathbb{C}P^n) &\doteq \{g \in \mathscr{H}^{2,\nu}(\mathbb{C}P^n) : d \omega_g = 0\};\\
		\mathscr{B}^{2,\nu}(\mathbb{C}P^n) &\doteq \{g \in \mathscr{H}^{2,\nu}(\mathbb{C}P^n) : d \omega^{n-1}_g = 0\};\\
		\mathscr{G}^{2,\nu}(\mathbb{C}P^n) &\doteq \{g \in \mathscr{H}^{2,\nu}(\mathbb{C}P^n) : dd^c \omega^{n-1}_g = 0\}. 		
	\end{align*}       
	
	\begin{obs}
		Note that $\mathscr{K}(\mathbb{C}P^n)$ is $C^{2,\nu}$-dense in $\mathscr{K}^{2,\nu}(\mathbb{C}P^n)$. This follows from the Hodge decomposition and the $dd^c$-Lemma for low regularity metrics (cf.~\cite{Morrey1956}, \cite[\S1.5]{Joyce_SpHolo}). The same reasoning applies to balanced and Gauduchon metrics.     
	\end{obs}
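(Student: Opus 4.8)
The plan is to treat all three classes uniformly by exploiting that each defining condition becomes \emph{linear} once one passes to the correct representative, and then to perturb \emph{within} the constraint by smoothing a potential. For the Kähler case the condition $d\omega=0$ is already linear in $\omega$; for the balanced and Gauduchon cases I first invoke Michelsohn's fibrewise bijection $\omega\mapsto\omega^{n-1}$ between positive $(1,1)$-forms and positive $(n-1,n-1)$-forms \cite{Michelsohn_1982}, under which $d\omega^{n-1}=0$ and $dd^c\omega^{n-1}=0$ become the \emph{linear} conditions $d\gamma=0$ and $dd^c\gamma=0$ on $\gamma\doteq\omega^{n-1}$. Since this bijection and its inverse $\gamma\mapsto\gamma^{1/(n-1)}$ are smooth diffeomorphisms of the open positive cones, they act continuously on Hölder sections and preserve $C^{k,\nu}$-regularity; thus it suffices to approximate the relevant constrained $(p,p)$-form, after which the inverse transports the approximation back to a metric of the same regularity with the same convergence. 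Throughout, \emph{positivity is an open condition in $C^{0}$}, so it is automatically inherited by any sufficiently close approximant.

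For the closed cases (Kähler, $p=1$; balanced, $p=n-1$) I use that $H^{2p}_{\mathrm{dR}}(\mathbb{C}P^n)=\mathbb{R}\cdot[\omega_{FS}^{p}]$. Writing the class of the given $C^{2,\nu}$ form as $\lambda[\omega_{FS}^{p}]$, the difference $\alpha-\lambda\omega_{FS}^{p}$ is a $d$-exact real $(p,p)$-form, so the $dd^c$-Lemma (valid at this regularity by \cite{Morrey1956} and \cite[\S1.5]{Joyce_SpHolo}) yields a real potential $\beta\in C^{4,\nu}$ of bidegree $(p-1,p-1)$ with $\alpha-\lambda\omega_{FS}^{p}=dd^c\beta$; the two-derivative gain is the usual Schauder estimate for the fixed, smooth elliptic operator on $\mathbb{C}P^n$. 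For the Gauduchon case I instead use the Aeppli--Hodge decomposition on the compact Kähler manifold $\mathbb{C}P^n$: a $dd^c$-closed $(n-1,n-1)$-form splits as $\gamma=\gamma_{\mathcal H}+\partial a+\overline{\partial a}$, with $\gamma_{\mathcal H}$ a smooth real Aeppli-harmonic representative and $a\in C^{4,\nu}$ of bidegree $(n-2,n-1)$. In every case one then smooths only the potential, $\beta_\epsilon, a_\epsilon\in C^\infty$, and sets $\alpha_\epsilon\doteq\lambda\omega_{FS}^{p}+dd^c\beta_\epsilon$ (resp.\ $\gamma_\epsilon\doteq\gamma_{\mathcal H}+\partial a_\epsilon+\overline{\partial a_\epsilon}$). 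The resulting forms are smooth, of the correct bidegree, and still satisfy the constraint: this is forced algebraically, since $d\,dd^c=0$ gives $d\alpha_\epsilon=0$, while $dd^c\partial=dd^c\bar\partial=0$ (from $\partial^2=\bar\partial^2=0$) gives $dd^c\gamma_\epsilon=0$. Positivity holds for $\epsilon$ small, and transporting $\gamma_\epsilon$ through $(\cdot)^{1/(n-1)}$ produces the desired smooth balanced/Gauduchon approximants.

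The main obstacle is the regularity bookkeeping in the Hölder scale, and two points deserve care. First, one must know that the $dd^c$-Lemma, the de Rham harmonic projection, and the Aeppli--Hodge decomposition are all \emph{bounded} operations on $C^{k,\nu}$, with the potentials gaining two derivatives; this is exactly the content of elliptic Schauder theory for the smooth operators $\Delta$, $\Delta_{\partial\bar\partial}$ and the Aeppli Laplacian, as in \cite{Morrey1956} and \cite[\S1.5]{Joyce_SpHolo}. Second, and more delicate, smoothing in Hölder spaces loses an arbitrarily small amount of the Hölder exponent: $\beta_\epsilon\to\beta$ and $a_\epsilon\to a$ in $C^{4,\nu'}$ for every $\nu'<\nu$, hence $\alpha_\epsilon\to\alpha$ (resp.\ $\omega_\epsilon\to\omega$) in $C^{2,\nu'}$ for every $\nu'<\nu$, but not in the full $C^{2,\nu}$-norm, since $C^\infty$ is norm-dense in $C^{2,\nu}$ only on the separable little-Hölder subspace. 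This is the precise sense in which the density should be read, and it already suffices for the later reductions; alternatively, working throughout with the little-Hölder spaces $c^{k,\nu}$ (on which $C^\infty$ \emph{is} norm-dense) removes the caveat entirely.
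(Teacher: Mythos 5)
Your proposal is correct and takes essentially the same route the paper intends: the remark is justified there only by citing the Hodge decomposition and the low-regularity $dd^c$-Lemma, and your argument---linearizing the balanced/Gauduchon conditions through Michelsohn's map $\omega\mapsto\omega^{n-1}$, extracting a potential that gains two derivatives by Schauder theory, mollifying only the potential, and invoking openness of positivity---is exactly the intended fleshing-out (your Aeppli decomposition in the Gauduchon case is interchangeable with the decomposition $a\,\omega_{FS}^{n-1}+d\alpha+d^c\beta$ that the paper itself uses in Proposition~\ref{systolexM,MsigmaonGaud}). Your closing caveat is moreover a genuine and needed qualification of the statement itself: since smooth sections are not norm-dense in $C^{2,\nu}$, the asserted density can only hold in the $C^{2,\nu'}$-topology for $\nu'<\nu$ (equivalently, within the little-Hölder class), which is how the remark must be read.
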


	Now we prove that the set of Kähler, balanced and Gauduchon metrics also can be endowed with a structure of Banach manifold. 
	
	\begin{prop}\label{chartsofKBG}
		The spaces $\mathscr{K}^{2,\nu}(\mathbb{C}P^n)$, $\mathscr{B}^{2,\nu}(\mathbb{C}P^n)$, and $\mathscr{G}^{2,\nu}(\mathbb{C}P^n)$ of $C^{2,\nu}$ K\"ahler, balanced, and Gauduchon metrics on $\mathbb{C}P^n$ have a smooth Banach manifold structure, for which the canonical inclusions are topological embeddings.
	\end{prop}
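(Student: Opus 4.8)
The plan is to realize each of the three spaces as an open subset of a closed linear subspace of a Banach space---after a change of variables in the balanced and Gauduchon cases---so that both the Banach manifold structure and the embedding property become essentially tautological. The Kähler case is immediate and sets the template: since $d\colon C^{2,\nu}(\Lambda^{1,1}_{\mathbb{R}}) \to C^{1,\nu}(\Lambda^{3})$ is a bounded linear operator, its kernel $\ker d$ is a closed subspace, and under the identification $\mathcal{J}$ of~\eqref{formxmetrics} one has $\mathscr{K}^{2,\nu}(\mathbb{C}P^n) = \mathcal{J}\big(\ker d \cap C^{2,\nu}(\Lambda^{1,1}_+)\big)$, the intersection of this closed subspace with the open positive cone. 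Being an open subset of the Banach space $\ker d$, it is a Banach manifold modelled on a single chart, and the inclusion of a closed subspace is a topological embedding.

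For the balanced and Gauduchon cases the conditions $d\omega_g^{n-1}=0$ and $dd^c\omega_g^{n-1}=0$ are nonlinear in $\omega_g$, so I would first linearize them via M. Michelsohn's pointwise correspondence \cite{Michelsohn_1982}: the fiberwise map $\omega \mapsto \omega^{n-1}$ is a smooth diffeomorphism of the open cone $\Lambda^{1,1}_+$ onto the open cone $\Lambda^{n-1,n-1}_+$. The essential step is to promote this to a diffeomorphism of Banach manifolds
\[
\Phi\colon C^{2,\nu}(\Lambda^{1,1}_+) \longrightarrow C^{2,\nu}(\Lambda^{n-1,n-1}_+), \qquad \Phi(\omega)=\omega^{n-1}.
\]
Since $\omega \mapsto \omega^{n-1}$ is fiberwise polynomial and $C^{2,\nu}$ is a Banach algebra, $\Phi$ is a smooth (indeed polynomial) map; its fiberwise inverse is represented, in a local unitary coframe where $\psi$ corresponds to a positive Hermitian matrix, by the real-analytic expression $\psi \mapsto (\det\psi)^{1/(n-1)}\,\psi^{-1}$, which is smooth throughout the positive cone, so $\Phi^{-1}$ is likewise a smooth composition operator. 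Crucially, under $\Phi$ the balanced and Gauduchon conditions become the \emph{linear} conditions $d\psi = 0$ and $dd^c\psi = 0$ on $\psi = \omega_g^{n-1}$.

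With this correspondence the two remaining cases run exactly parallel to the Kähler one. The operators $d\colon C^{2,\nu}(\Lambda^{n-1,n-1}_{\mathbb{R}}) \to C^{1,\nu}(\Lambda^{2n-1})$ and $dd^c\colon C^{2,\nu}(\Lambda^{n-1,n-1}_{\mathbb{R}}) \to C^{0,\nu}(\Lambda^{n,n})$ are bounded and linear, hence have closed kernels; intersecting each kernel with the open cone $C^{2,\nu}(\Lambda^{n-1,n-1}_+)$ and transporting the resulting structure along the diffeomorphism $\Phi\circ\mathcal{J}^{-1}$ endows $\mathscr{B}^{2,\nu}(\mathbb{C}P^n)$ and $\mathscr{G}^{2,\nu}(\mathbb{C}P^n)$ with Banach manifold structures, each again modelled on a single chart. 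Because $\Phi$ is a global homeomorphism, the subspace topology that each space inherits from $\mathscr{H}^{2,\nu}(\mathbb{C}P^n)$ coincides with its manifold topology, so the canonical inclusions are topological embeddings, as asserted.

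I expect the only genuinely delicate point to be the Banach-space regularity of $\Phi$ and $\Phi^{-1}$: one must verify that the fiberwise smooth Michelsohn correspondence induces a bona fide smooth map, with smooth inverse, between the Hölder section spaces restricted to the open positive cones. This is the standard theory of Nemytskii/composition operators on Hölder spaces---smoothness of $u \mapsto f\circ u$ for $f\in C^\infty$, with Fréchet derivatives given by pointwise multiplication by the derivatives of $f$---and the main care is to keep the image inside the open cone, where the inverse expression $(\det\psi)^{1/(n-1)}\psi^{-1}$ remains smooth. The reward of this linearization is that the cohomological subtleties that would obstruct a direct implicit-function-theorem approach (the images of $d$ and $dd^c$ are proper closed subspaces, so these operators are not submersions onto their natural targets) are avoided entirely.
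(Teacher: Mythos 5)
Your proposal is correct and takes essentially the same route as the paper: the paper likewise invokes Michelsohn's correspondence $\omega \mapsto \omega^{n-1}$ as a diffeomorphism of Banach manifolds $C^{2,\nu}(\Lambda^{1,1}_+) \to C^{2,\nu}(\Lambda^{n-1,n-1}_+)$, realizing the balanced and Gauduchon metrics as the open sets of $d$-closed and $dd^c$-closed positive $(n-1,n-1)$-forms (global charts), with the K\"ahler case handled directly as an open subset of the closed subspace of $d$-closed $(1,1)$-forms. The smoothness of the power map and of its fiberwise inverse $(\det\psi)^{1/(n-1)}\psi^{-1}$, which you rightly flag as the one delicate point, is precisely what the paper delegates to \cite{Michelsohn_1982} and Proposition 5.2 of \cite{junior2023balanced}.
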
 
	\begin{proof}
		Following~\cite{Michelsohn_1982} (see also \cite[Proposition 5.2]{junior2023balanced}) the smooth map
		\begin{align*}
			C^{2,\nu}\big(\Lambda_+^{1,1}\big) &\to C^{2,\nu}\big(\Lambda_+^{n-1,n-1}\big) \\
			\omega &\mapsto \omega^{n-1},
		\end{align*}
		is a diffeomorphism between Banach manifolds. Moreover, it induces homeomorphisms
		\[
		\mathscr{B}^{2,\nu}(\mathbb{C}P^n) \ni g \mapsto \omega_g \in C^{2,\nu}_d(\Lambda_+^{1,1})
		\quad \text{and} \quad
		\mathscr{G}^{2,\nu}(\mathbb{C}P^n) \ni g \mapsto \omega_g \in C^{2,\nu}_{dd^c}(\Lambda_+^{1,1}),
		\]
		where $C^{2,\nu}_d(\Lambda_+^{1,1})$ and $C^{2,\nu}_{dd^c}(\Lambda_+^{1,1})$ denote the spaces of $d$-closed and $dd^c$-closed forms, respectively. These mappings provide global charts, inducing Banach manifold structures on the corresponding spaces of metrics. The case of K\"ahler metrics follows analogously via Corollary $5.4$ of \cite{junior2023balanced} and the subsequent observation.  
	\end{proof}

	We finish our recapitulation recalling some important differential operators in complex geometry. 
	
	Let $g$ be a Hermitian metric on $\mathbb{C}P^n$. As previously mentioned, $d^c$ denotes the exterior derivative twisted by the complex structure. We also define $\delta_g, \delta^c_g : \Omega^{\bullet}(\mathbb{C}P^n) \to \Omega^{\bullet-1}(\mathbb{C}P^n)$ as the \emph{codifferential operators} induced by $g$ and its twisted version, respectively. The \emph{Lefschetz operator} $L_g : \Omega^{\bullet}(\mathbb{C}P^n) \to \Omega^{\bullet+2}(\mathbb{C}P^n)$ is defined as wedge product with the K\"ahler form associated to $g$, moreover its adjoint is denoted by $\Lambda_g : \Omega^{\bullet}(\mathbb{C}P^n) \to \Omega^{\bullet-2}(\mathbb{C}P^n)$, and its called the \emph{dual Lefschetz operator}. Hodge theory provides the decomposition $C^{2,\nu}(\Lambda^\bullet) = \mathbb{R} \omega_g  \oplus \mathrm{Im}(d) \oplus \mathrm{Im}(\delta_g)$. The canonical projections associated with this decomposition are denoted by $\Pi_g$, $\Pi_{d,g}$, and $\Pi_{\delta,g}$, respectively. When these operators are written without subscripts, it is understood that they are associated with the Fubini-Study metric. Also, from this moment forward the Fubini-Study metric will be normalized to satisfy $\mathrm{Vol}(\mathbb{C}P^n,g_{FS})=(n!)^{-1}$, or equivalently $\int_{\mathbb{C}P^n} \omega_{FS}^n = 1$. 
	
	\subsection*{The Generalized Radon Transform}
		
	Prior to introducing the Radon transform, we recall that the family of equators is parameterized again by the complex projective space in the following sense: every complex codimension $1$ linear complex projective subspace can be described by $\Sigma_{\sigma} \doteq \{p \in \mathbb{C}P^n : p \perp \sigma \}$, for some $\sigma \in \mathbb{C}P^n$. A natural description of this parametrization is via the \emph{incidence set}:  
	$$ \mathcal{I} \doteq \{(p, \sigma) \in \mathbb{C}P^n \times \mathbb{C}P^n : p \in \Sigma_\sigma\} \subset \mathbb{C}P^n \times \mathbb{C}P^n.$$
	
	In fact, the incidence set endowed with the canonical projections on the first and second coordinates defines the \emph{double fibration} (see \cite[Definition $2.6$]{paiva_fernandes}):     
	\begin{equation}\label{doublefibration}
		\begin{split}	
			\begin{xy}\xymatrix{
					& \mathcal{I} 
					\ar[dl]_{\kappa}  \ar[dr]^{\upsilon} & \\
					\mathbb{C}P^{n}&   & \mathbb{C}P^{n}. 
				}
			\end{xy} 
		\end{split}
	\end{equation}   
	
	With this language the family of equators can be expressed as $\Sigma_{\sigma} = \kappa (\upsilon^{-1}(\sigma))$, for each $\sigma\in \mathbb{C}P^n$. Moreover, we define the \emph{dual family} as $\Xi_{p} \doteq \upsilon(\kappa^{-1}(p))$, for each $p \in \mathbb{C}P^n$. Clearly, the dual family consist of linear equators in the space of parameters. 
	
	The classical \emph{Radon transform} associated to the family of equators $\{\Sigma_{\sigma}\}_{\sigma\in \mathbb{C}P^n}$ in $(\mathbb{C}P^n, g_{FS})$ is defined as the operator $\mathcal{R}: C^{0}(\mathbb{C}P^n) \to C^{0}(\mathbb{C}P^n)$ acting on a continuous function $f$ as follows:
	\[
	\mathcal{R}(f)(\sigma) = \dashint_{\Sigma_\sigma} f d\mathrm{A}_{g_{FS}},
	\]
	 An extensive literature regarding the Radon transform for two-point symmetric spaces, due to S. Helgason, can be found in \cite{HelgasonBook1999}. 
	
	It will also be useful to define the \emph{dual Radon transform}, which is the unique operator satisfying:
	\begin{equation}\label{dualradondef}
		\int_{\mathbb{C}P^n} \mathcal{R}(\phi) \psi \, d\mathrm{Vol}_{g_{\mathrm{FS}}} = \int_{\mathbb{C}P^n} \phi \mathcal{R}^*(\psi) \, d\mathrm{Vol}_{g_{\mathrm{FS}}},
	\end{equation}
	for every $\phi, \psi \in C^{0}(\mathbb{C}P^n)$. Endowing the incidence set with the product metric and applying the co-area formula together with the fact that $(\mathbb{C}P^n,g_{FS})$ is a two-point symmetric space, we see that the dual Radon transform is explicitly given by the formula $\mathcal{R}^*(f)(p) = \dashint_{\Xi_p} f d\mathrm{A}_{g_{FS}}$. 
	
	In \cite{HelgasonBook1999},  S. Helgason proved the \emph{inversion formula} for $\mathcal{R}$ in the set of smooth functions, which allow us to infer it is injective.
	
	\begin{prop}\label{Radonker}
		The Radon transform $\mathcal{R}: C^{0}(\mathbb{C}P^n) \to C^{0}(\mathbb{C}P^n)$ has trivial kernel. 
	\end{prop}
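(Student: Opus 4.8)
The plan is to bootstrap from Helgason's inversion formula—which already yields injectivity of $\mathcal{R}$ on the dense subspace $C^{\infty}(\mathbb{C}P^n)\subset C^0(\mathbb{C}P^n)$—to the full space of continuous functions, by a mollification argument performed along the isometry group and exploiting the equivariance of $\mathcal{R}$. Let $G=\mathrm{Isom}_0(\mathbb{C}P^n,g_{FS})\cong PU(n+1)$ denote the identity component of the isometry group of $(\mathbb{C}P^n,g_{FS})$; it is a compact Lie group acting smoothly and transitively on $\mathbb{C}P^n$, with the action on functions written $(\tau_g f)(p)=f(g^{-1}\cdot p)$.

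The first step is to record the equivariance of $\mathcal{R}$. Since every $g\in G$ preserves Hermitian orthogonality, it maps the family of equators to itself compatibly with the parametrization, i.e. $g(\Sigma_\sigma)=\Sigma_{g\sigma}$, and it preserves the induced area measure $d\mathrm{A}_{g_{FS}}$. A direct change of variables in the definition of $\mathcal{R}$ then gives
\[
\mathcal{R}(\tau_g f)=\tau_g\,\mathcal{R}(f),\qquad \text{for all } g\in G,\ f\in C^0(\mathbb{C}P^n).
\]
The second step is the mollification. Fix a bi-invariant Haar probability measure $\mu$ on $G$ and a family $\{\phi_\epsilon\}_{\epsilon>0}$ of nonnegative functions in $C^{\infty}(G)$ with $\int_G \phi_\epsilon\,d\mu=1$ and supports shrinking to the identity as $\epsilon\to 0$. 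Define
\[
f_\epsilon(p)=\int_G \phi_\epsilon(g)\,f(g^{-1}\cdot p)\,d\mu(g).
\]
Choosing, near any $p_0$, a smooth local section $p\mapsto s(p)\in G$ with $s(p)\cdot p_0=p$ and substituting $h=g^{-1}s(p)$ (legitimate by bi-invariance of $\mu$), one transfers the entire $p$-dependence onto the smooth kernel $\phi_\epsilon$, so differentiation under the integral sign shows $f_\epsilon\in C^{\infty}(\mathbb{C}P^n)$. Because $f$ is uniformly continuous on the compact manifold $\mathbb{C}P^n$ and $g^{-1}\cdot p\to p$ uniformly as $g\to e$, we also have $f_\epsilon\to f$ uniformly as $\epsilon\to 0$.

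The conclusion is then immediate. Suppose $f\in C^0(\mathbb{C}P^n)$ lies in the kernel, $\mathcal{R}(f)=0$. By linearity and the equivariance above,
\[
\mathcal{R}(f_\epsilon)=\int_G \phi_\epsilon(g)\,\mathcal{R}(\tau_g f)\,d\mu(g)=\int_G \phi_\epsilon(g)\,\tau_g\,\mathcal{R}(f)\,d\mu(g)=0.
\]
Since each $f_\epsilon$ is smooth, Helgason's inversion formula forces $f_\epsilon\equiv 0$ for every $\epsilon>0$; letting $\epsilon\to 0$ and using uniform convergence yields $f\equiv 0$, which proves that $\mathcal{R}$ has trivial kernel on $C^0(\mathbb{C}P^n)$.

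The genuinely new content of the argument is this reduction to the smooth case; the only points requiring care are the smoothness of the mollified functions $f_\epsilon$ and the commutation of the mollification with $\mathcal{R}$, both of which follow from the equivariance of the Radon transform together with the transitive smooth action of $G$. These verifications are routine, so the main obstacle is conceptual rather than computational: identifying the isometry-group mollification as the correct device for upgrading injectivity from $C^{\infty}$ to $C^0$, and checking that the cited inversion formula applies to the smooth approximants $f_\epsilon$.
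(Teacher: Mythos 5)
Your proof is correct, but it takes a genuinely different route from the paper. The paper argues by duality: given $f\in\ker\mathcal{R}$ and an arbitrary smooth test function $\phi$, Helgason's inversion formula $\phi=\mathcal{R}^*\bigl(p(\Updelta_{g_{FS}})\mathcal{R}(\phi)\bigr)$ exhibits $\phi$ as $\mathcal{R}^*(\psi)$ for a smooth $\psi$, and the adjoint relation defining $\mathcal{R}^*$ then gives $\int f\phi = \int f\,\mathcal{R}^*(\psi)=\int \mathcal{R}(f)\,\psi = 0$; since this holds for all smooth $\phi$, density forces $f=0$. You instead never invoke the dual transform: you exploit the $PU(n+1)$-equivariance of $\mathcal{R}$ to show that group convolution preserves the kernel, producing smooth approximants $f_\epsilon\in\ker\mathcal{R}$, kill each $f_\epsilon$ by applying the inversion formula directly (injectivity on $C^\infty$), and pass to the uniform limit. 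The trade-off is clear: the paper's argument is shorter because the adjoint identity \eqref{dualradondef} is already set up for the integral-geometric formula, and it needs no smoothing lemmas; your argument carries the extra burden of verifying smoothness of the mollified functions and the Fubini interchange commuting $\mathcal{R}$ with the group integral, but in exchange it is independent of $\mathcal{R}^*$ and rests on a more robust general principle — for any continuous operator equivariant under a transitive compact group action, injectivity on $C^\infty$ upgrades automatically to $C^0$ — which would apply verbatim to other equivariant transforms where an explicit dual or inversion-by-duality is unavailable. Both proofs consume Helgason's inversion formula as the essential input, just at different points: the paper uses it to realize test functions in the image of $\mathcal{R}^*$, you use it as a black-box injectivity statement on smooth functions.
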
  
	\begin{proof}
		Fix $f \in \mathrm{Ker}(\mathcal{R})$, and let $\phi \in C^{\infty}(\mathbb{C}P^n)$ be a smooth function. By the inversion formula (see \cite[Chapter $3$, Theorem $2.2$]{HelgasonBook1999}) there exists a polynomial $p$ such that $\phi = \mathcal{R}^*(p(\Updelta_{g_{FS}})\mathcal{R}(\phi))$, hence setting $\psi= p(\Updelta_{g_{FS}})\mathcal{R}(\phi) \in C^{\infty}(\mathbb{C}P^n)$, the following identities hold:   	
		$$\int_{\mathbb{C}P^n}f \phi \, d\mathrm{Vol}_{g_{\mathrm{FS}}} = \int_{\mathbb{C}P^n}f \mathcal{R}^*(\psi) \, d\mathrm{Vol}_{g_{\mathrm{FS}}} = 
		\int_{\mathbb{C}P^n}\mathcal{R}(f) \psi \, d\mathrm{Vol}_{g_{\mathrm{FS}}} =0.$$
		Thus $f = 0$ by density of smooth functions.      
	\end{proof}
	
	The Radon transform of a function~$f$ at~$\sigma \in \mathbb{C}P^n$ can be thought of as the integral over the equator~$\Sigma_\sigma$ of the $(2n-2)$-form~$f \omega^{n-1}_{\text{FS}}$, which is defined on all of~$\mathbb{C}P^n$. This observation suggests a natural generalization of the Radon transform to arbitrary $(2n-2)$-form.
	
	In fact, we define the \emph{generalized Radon transform} associated to the family of equators $\{\Sigma_{\sigma}\}_{\sigma\in \mathbb{C}P^n}$ in $\mathbb{C}P^n$ as the operator $\mathcal{R}: C^{2,\nu}(\Lambda^{2n-2}) \to C^{2,\nu}(\mathbb{C}P^n)$ acting on the $(2n-2)$-forms $\xi$ by: 
	\begin{equation}\label{genradontrans}
		\mathcal{R}(\xi)(\sigma) = \int_{\Sigma_{\sigma}}{ \xi} .
	\end{equation}

 	An important tool associated to the classical Radon transform is the \emph{integral geometric formula} associated to the family of equators $\{\Sigma_{\sigma}\}_{\sigma\in \mathbb{C}P^n}$, which can be recover from the explicit formula for the dual Radon transform. Indeed, putting $\psi=1$ in the equation \eqref{dualradondef} we obtain: 
	\begin{equation}\label{IGF}
		\int_{\mathbb{C}P^n}\left( \dashint_{\Sigma_\sigma} f \,d\mathrm{A}_{g_{FS}}\right) \,d \mathrm{Vol}_{g_{FS}} = \int_{\mathbb{C}P^n} f \,d \mathrm{Vol}_{g_{FS}}.
	\end{equation} 
	
	In the next proposition we generalize this integral geometric formula for tensors.
	\begin{prop}[Integral Geometric Formula]\label{IGFfortensors}
		For every $\xi \in C^{2,\nu}(\Lambda^{2n-2})$, there holds:
		\[
		\dashint_{\mathbb{C}P^n} \left( \int_{\Sigma_\sigma} \xi \right) d\mathrm{Vol}_{g_{\text{FS}}}
		= \int_{\mathbb{C}P^n} \xi \wedge \omega_{g_{\text{FS}}}.
		\]
	\end{prop}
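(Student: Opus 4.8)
The plan is to transport the averaged Radon integral to the incidence variety $\mathcal{I}$ and then push it back down through the opposite leg of the double fibration \eqref{doublefibration}, reducing the statement to the identification of a single invariant $2$-form. First I would realize the inner integral as a fiber integral: since $\kappa$ restricts to a diffeomorphism $\upsilon^{-1}(\sigma)\xrightarrow{\sim}\Sigma_\sigma$, orienting the fibers by the complex structure gives $\int_{\Sigma_\sigma}\xi=\int_{\upsilon^{-1}(\sigma)}\kappa^*\xi=(\upsilon_*\kappa^*\xi)(\sigma)$, where $\upsilon_*$ denotes integration over the fibers of the fiber bundle $\upsilon\colon\mathcal{I}\to\mathbb{C}P^n$ (whose fibers are the equators $\Sigma_\sigma\cong\mathbb{C}P^{n-1}$). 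The projection formula then yields
\[
\int_{\mathbb{C}P^n}\Big(\int_{\Sigma_\sigma}\xi\Big)\,d\mathrm{Vol}_{g_{FS}}
=\int_{\mathbb{C}P^n}\upsilon_*(\kappa^*\xi)\wedge d\mathrm{Vol}_{g_{FS}}
=\int_{\mathcal{I}}\kappa^*\xi\wedge\upsilon^* d\mathrm{Vol}_{g_{FS}}.
\]

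Next I would push forward along the other leg $\kappa\colon\mathcal{I}\to\mathbb{C}P^n$, a fiber bundle whose fibers are the dual equators $\Xi_p\cong\mathbb{C}P^{n-1}$ of real dimension $2n-2$. Applying the projection formula once more gives
\[
\int_{\mathcal{I}}\kappa^*\xi\wedge\upsilon^* d\mathrm{Vol}_{g_{FS}}
=\int_{\mathbb{C}P^n}\xi\wedge\Theta,\qquad \Theta\doteq\kappa_*\big(\upsilon^* d\mathrm{Vol}_{g_{FS}}\big),
\]
where $\Theta$ is a smooth $2$-form on $\mathbb{C}P^n$. The crux is to identify $\Theta$. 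The group $U(n+1)$ acts on $\mathbb{C}P^n$ by isometries of $g_{FS}$, acts diagonally on $\mathbb{C}P^n\times\mathbb{C}P^n$ preserving the orthogonality condition defining $\mathcal{I}$, and both projections $\kappa,\upsilon$ are equivariant. As $d\mathrm{Vol}_{g_{FS}}$ is $U(n+1)$-invariant and fiber integration commutes with the (orientation-preserving) action, $\Theta$ is a $U(n+1)$-invariant $2$-form. Because the isotropy representation of $\mathbb{C}P^n=U(n+1)/(U(1)\times U(n))$ is the standard (irreducible) representation of $U(n)$, the space of invariant $2$-forms is one-dimensional and spanned by $\omega_{FS}$; hence $\Theta=c\,\omega_{FS}$ for some constant $c$.

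It then remains to pin down $c$ and divide by the total volume. Testing the identity on $\xi=\omega_{FS}^{n-1}$, all equators being congruent gives the constant left-hand value $\int_{\Sigma_\sigma}\omega_{FS}^{n-1}=\int_{\mathbb{C}P^{n-1}}\omega_{FS}^{n-1}=1$, while the right-hand side equals $c\int_{\mathbb{C}P^n}\omega_{FS}^{n}=c$; together with the normalization $\int_{\mathbb{C}P^n}\omega_{FS}^{n}=1$ this forces $c=\mathrm{Vol}(\mathbb{C}P^n,g_{FS})=(n!)^{-1}$. (Alternatively $c$ can be extracted by specializing to $\xi=f\,\omega_{FS}^{n-1}/(n-1)!$, using Wirtinger's theorem $\omega_{FS}^{n-1}/(n-1)!\,|_{\Sigma_\sigma}=d\mathrm{A}_{g_{FS}}$ together with the scalar formula \eqref{IGF}.) Dividing by $\mathrm{Vol}(\mathbb{C}P^n,g_{FS})$ converts the outer integral into the average $\dashint$, and the resulting factor $c/\mathrm{Vol}=1$ produces exactly $\int_{\mathbb{C}P^n}\xi\wedge\omega_{FS}$, as claimed.

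I expect the main obstacle to be the identification of $\Theta$: rigorously justifying that fiber integration over the double fibration yields an invariant form, with the correct bookkeeping of fiber orientations and the projection formula on the compact fibers. As a consistency check I would note that both sides vanish on exact forms $\xi=d\eta$---the left by Stokes' theorem, each $\Sigma_\sigma$ being closed, and the right because $d\omega_{FS}=0$---while, using $*\omega_{FS}=\omega_{FS}^{n-1}/(n-1)!$ and $\langle\delta\beta,\omega_{FS}^{n-1}/(n-1)!\rangle=\langle\beta,d(\omega_{FS}^{n-1}/(n-1)!)\rangle=0$, the right side also vanishes on coexact forms; both features are forced by, and hence compatible with, the derived identity.
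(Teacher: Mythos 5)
Your proof is correct and follows essentially the same route as the paper's: both realize the averaged quantity as $\upsilon_*(\kappa^*\xi)$ via the double fibration, apply the projection formula to shift the weight to $\kappa_*\big(\upsilon^*(\cdot)\big)$, identify this pushforward as an invariant $2$-form homothetic to $\omega_{FS}$, and fix the constant by testing on $\xi=\omega_{FS}^{n-1}$. The only inessential differences are that you work with $d\mathrm{Vol}_{g_{FS}}$ and divide by the volume at the end (the paper uses $\omega_{FS}^n$ directly), you spell out the invariance step via the irreducible isotropy representation of $U(n)$ (the paper simply asserts $SU(n+1)$-invariance), and the paper closes with a density remark to pass from smooth forms to $C^{2,\nu}$ forms, which your argument leaves implicit.
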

	\begin{proof}
		The projections $\kappa,\upsilon: \mathcal{I} \to \mathbb{C}P^n$ induces maps $\kappa_*,\upsilon_*:\Omega^\bullet(\mathcal{I}) \to \Omega^{\bullet-(2n-2)}(\mathbb{C}P^n)$ by integration along the fiber (see \cite[\S6]{bott_tu}). With this notation we can rewrite the generalized Radon transform as $\mathcal{R}(\xi)=\upsilon_*(\kappa^*\xi)$. Therefore, by the classical \emph{projection formula} \cite[Proposition $6.15$]{bott_tu}, for every $\xi \in \Omega^{2n-2}(\mathbb{C}P^n)$  we have that:  
		\begin{equation}\label{prejectionformula}
			\dashint_{\mathbb{C}P^n} \left( \int_{\Sigma_\sigma} \xi \right) d\mathrm{Vol}_{g_{\text{FS}}} = \int_{\mathbb{C}P^n}\upsilon_*(\kappa^*\xi) \wedge \omega_{FS}^{n} = \int_{\mathbb{C}P^n} \xi \wedge \kappa_*(\mu^* (\omega^n_{FS})).
		\end{equation}
		
		On the other hand, $\kappa_*(\upsilon^* (\omega^n_{FS}))$ is an $SU(n+1)$-invariant $2$-form. Hence, it is homothetic to the Fubini-Study form. However, by putting $\xi=\omega^{n-1}_{FS}$ in~\eqref{prejectionformula} we conclude that the homothety constant is one. The general result follows by density.        
	\end{proof} 
	
	Now that we have set all the machinery related to the Radon transform, we turn our attention to proving Proposition~\ref{maintheoremD}. That is, to classify the Hermitian metrics for which the area along the family of linear equators is constant.
	
	More specifically, given a Hermitian metric \( g \in \mathscr{H}(\mathbb{C}P^n) \), we define the function:
	\begin{equation}\label{areafunction}
		\begin{aligned}
			\mathcal{A}_g: \mathbb{C}P^n &\to \mathbb{R} \\
			\sigma &\mapsto \mathcal{A}_g(\sigma) \doteq \mathrm{Area}_g(\Sigma_\sigma).  
		\end{aligned}
	\end{equation}      
	
	Therefore, the aforementioned proposition is equivalent to characterizing the set of Gauduchon metrics as the set of Hermitian metrics for which the function \(\mathcal{A}_g\) is constant.
	
	The main tool that we will need is the following description of the function~\eqref{areafunction} in terms of the generalized Radon transform:
	\begin{equation}\label{A_gxR}
		\mathcal{A}_g(\sigma) =\frac{1}{(n-1)!}\, \mathcal{R}(\omega_g^{n-1})(\sigma).
	\end{equation}      
	
	In fact, the description~\eqref{A_gxR} together with the geometric integral formula yields the following result.
	   
	\begin{prop}\label{propareaxradontr}
		Let $g \in \mathscr{H}^{2,\nu}(\mathbb{C}P^n)$ be a Hermitian metric. Then, the area function $\mathcal{A}_g$ is constant if and only if the $(2n-2)$-form: $$\omega^{n-1}_{g} - \left(\int_{\mathbb{C}P^n} \omega^{n-1}_{g}\wedge \omega_{FS}\right) \omega^{n-1}_{FS}$$ is in the kernel of the generalized Radon transform.
	\end{prop}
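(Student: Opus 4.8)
The plan is to reduce the statement to the linearity of the generalized Radon transform~\eqref{genradontrans} together with the Integral Geometric Formula (Proposition~\ref{IGFfortensors}), after first pinning down the behaviour of $\mathcal{R}$ on $\omega_{FS}^{n-1}$. Throughout, set $\xi \doteq \omega_g^{n-1}$ and $c \doteq \int_{\mathbb{C}P^n}\omega_g^{n-1}\wedge\omega_{FS}$. By the description~\eqref{A_gxR}, the function $\mathcal{A}_g$ is constant if and only if $\mathcal{R}(\xi)$ is constant, so I may argue entirely at the level of $\mathcal{R}(\xi)$, and the form appearing in the statement is precisely $\eta \doteq \xi - c\,\omega_{FS}^{n-1}$.

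The first and only non-formal step is to observe that $\mathcal{R}(\omega_{FS}^{n-1})$ is a constant function. Since $\omega_{FS}^{n-1}$ is closed and all equators $\Sigma_\sigma$ are homologous (each generates $H_{2n-2}(\mathbb{C}P^n;\mathbb{Z})$), Stokes' theorem shows that $\sigma\mapsto\int_{\Sigma_\sigma}\omega_{FS}^{n-1}$ depends only on the fixed pairing $\langle[\omega_{FS}]^{n-1},[\Sigma_\sigma]\rangle$, hence is independent of $\sigma$; its value is $\int_{\mathbb{C}P^n}\omega_{FS}^{n-1}\wedge\omega_{FS} = \int_{\mathbb{C}P^n}\omega_{FS}^n = 1$ by the chosen normalization. (Equivalently, constancy follows from the homogeneity of $g_{FS}$ under the transitive $SU(n+1)$-action on the family of equators, and the value from Proposition~\ref{IGFfortensors} applied to $\xi=\omega_{FS}^{n-1}$.) Thus $\mathcal{R}(\omega_{FS}^{n-1})\equiv 1$.

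Next I apply the Integral Geometric Formula to $\eta$. By linearity of $\mathcal{R}$ and the previous step, $\mathcal{R}(\eta) = \mathcal{R}(\xi) - c$, so $\mathcal{R}(\eta)$ and $\mathcal{R}(\xi)$ differ by the constant $c$. Moreover,
\[
\dashint_{\mathbb{C}P^n}\mathcal{R}(\eta)\,d\mathrm{Vol}_{g_{FS}} = \int_{\mathbb{C}P^n}\eta\wedge\omega_{FS} = \int_{\mathbb{C}P^n}\omega_g^{n-1}\wedge\omega_{FS} - c\int_{\mathbb{C}P^n}\omega_{FS}^n = c - c = 0,
\]
so $\mathcal{R}(\eta)$ has vanishing $g_{FS}$-average.

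The conclusion is then pure bookkeeping: because $\mathcal{R}(\xi)$ and $\mathcal{R}(\eta)$ differ by a constant, $\mathcal{R}(\xi)$ is constant if and only if $\mathcal{R}(\eta)$ is constant; and a constant function with zero average must vanish identically. Chaining these, $\mathcal{A}_g$ is constant $\iff$ $\mathcal{R}(\xi)$ is constant $\iff$ $\mathcal{R}(\eta)\equiv 0$ $\iff$ $\eta\in\ker\mathcal{R}$, which is exactly the asserted equivalence. I expect the only place demanding genuine care to be the identification $\mathcal{R}(\omega_{FS}^{n-1})\equiv 1$, since that is where the specific geometry of $g_{FS}$ and the normalization $\int_{\mathbb{C}P^n}\omega_{FS}^n=1$ enter; everything downstream is a formal consequence of linearity and Proposition~\ref{IGFfortensors}.
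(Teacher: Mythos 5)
Your proof is correct and takes essentially the same route as the paper's: both rest on linearity of $\mathcal{R}$, the identity~\eqref{A_gxR}, and the Integral Geometric Formula (Proposition~\ref{IGFfortensors}), the paper phrasing the conclusion as the single identity $\mathcal{R}\big(\omega_g^{n-1} - c\,\omega_{FS}^{n-1}\big) = (n-1)!\,\{\mathcal{A}_g - \dashint_{\mathbb{C}P^n}\mathcal{A}_g\,d\mathrm{Vol}_{g_{FS}}\}$ rather than your two-step bookkeeping. The only substantive difference is that you explicitly isolate and justify the fact $\mathcal{R}(\omega_{FS}^{n-1})\equiv 1$, which the paper's displayed identity uses implicitly.
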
   
	\begin{proof}
		Fix $g \in \mathscr{H}^{2,\nu}(\mathbb{C}P^n)$. By the integral geometric formula, the average of $\mathcal{A}_g$ is given by: 
		$$\dashint_{\mathbb{C}P^n}\mathcal{A}_g(\sigma)\, d\mathrm{Vol}_{g_{FS}} = \frac{1}{(n-1)!} \int_{\mathbb{C}P^n} \omega_{g}^{n-1}\wedge \omega_{FS}.$$
		
		Hence, by the identity $\mathcal{R}(\omega^{n-1}_g)= {(n-1)!}\, \mathcal{A}_g $ we obtain: 
		$$\mathcal{R}\left(\omega^{n-1}_{g} - \left(\int_{\mathbb{C}P^n} \omega^{n-1}_{g}\wedge \omega_{FS}\right) \omega^{n-1}_{FS}\right) = (n-1)!\left\{\mathcal{A}_g - \dashint_{\mathbb{C}P^n}\mathcal{A}_g(\sigma)\, d\mathrm{Vol}_{g_{FS}} \right\},$$
		which concludes the proof. 
	\end{proof}
	  
	The previous reasoning motivates the following definition. The \emph{zero average Radon transform} is the operator $\mathring{\mathcal{R}}: C^{2,\nu}(\Lambda^{2n-2}) \to C^{2,\nu}(\mathbb{C}P^n)$ given by $\mathring{\mathcal{R}}(\xi) \doteq \mathcal{R}\big(\mathring{\xi} \big)$, where $\mathring{\xi}$ is the zero average part of the tensor $\xi$, which is defined by $\xi - \left(\int_{\mathbb{C}P^n} \xi \wedge \omega_{FS}\right) \omega^{n-1}_{FS}$. Next we compute the kernel of the zero average Radon transform and as a corollary we prove Theorem \ref{maintheoremD}.
	\begin{prop}\label{kernelofmeanRT}
		The kernel of the zero average Radon transform is $\ker \mathring{\mathcal{R}} = C^{2,\nu}_{dd^c}(\Lambda^{2n-2})$.
	\end{prop}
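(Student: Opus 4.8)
The plan is to reduce the statement to $(n-1,n-1)$-forms and then transport the problem, via the double fibration~\eqref{doublefibration}, to the injectivity of the classical Radon transform on functions (Proposition~\ref{Radonker}).

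First I would observe that both $\ker\mathring{\mathcal{R}}$ and $C^{2,\nu}_{dd^c}(\Lambda^{2n-2})$ depend only on the $(n-1,n-1)$-component of a form. Indeed, any $(n,n-2)$- or $(n-2,n)$-form restricts to zero on the complex hypersurface $\Sigma_\sigma$, and wedges with $\omega_{FS}$ into a form of bidegree $(n+1,n-1)$ or $(n-1,n+1)$, hence vanishing; so it lies in $\ker\mathring{\mathcal{R}}$. It is also automatically $dd^c$-closed, since $dd^c=2i\partial\bar\partial$ annihilates these bidegrees. Thus it suffices to treat $\eta\in C^{2,\nu}(\Lambda^{n-1,n-1})$. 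Using $\mathcal{R}(\omega_{FS}^{n-1})\equiv 1$ (which follows from Proposition~\ref{IGFfortensors} together with $SU(n+1)$-invariance) and the definition of the zero-average part, one checks that $\mathring{\mathcal{R}}(\eta)=\mathcal{R}(\eta)-\int_{\mathbb{C}P^n}\eta\wedge\omega_{FS}$, so that $\eta\in\ker\mathring{\mathcal{R}}$ if and only if the function $\mathcal{R}(\eta)$ is constant. The proposition is therefore equivalent to the equivalence $\mathcal{R}(\eta)$ constant $\iff dd^c\eta=0$.

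The central step is a commutation relation. Writing $\mathcal{R}(\eta)=\upsilon_*(\kappa^*\eta)$ as in the proof of Proposition~\ref{IGFfortensors}, and using that $\kappa,\upsilon$ are (anti)holomorphic submersions whose fibers are closed complex submanifolds, integration along the fiber commutes with $d$ and with $d^c$ up to sign; consequently
\[
dd^c\,\mathcal{R}(\eta)=\varepsilon\,\upsilon_*\!\left(\kappa^*(dd^c\eta)\right),\qquad \varepsilon\in\{\pm1\},
\]
as an identity of $2$-forms on the parameter space, valid for smooth $\eta$ and extended to $C^{2,\nu}$ by density. From here one inclusion is immediate: if $dd^c\eta=0$, then $dd^c\mathcal{R}(\eta)=0$, i.e. the real function $\mathcal{R}(\eta)$ is pluriharmonic; taking the trace with $\Lambda$ shows that $\mathcal{R}(\eta)$ is harmonic, hence constant on the compact manifold $\mathbb{C}P^n$, and thus $\eta\in\ker\mathring{\mathcal{R}}$. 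For the converse, suppose $\mathcal{R}(\eta)$ is constant and write $dd^c\eta=h\,\omega_{FS}^n$ with $h\in C^{0,\nu}(\mathbb{C}P^n)$. The relation above yields $\upsilon_*(\kappa^*(h\,\omega_{FS}^n))=0$, hence in particular its trace vanishes. I would then identify this trace with the classical scalar Radon transform: since the operator $h\mapsto\Lambda\,\upsilon_*(\kappa^*(h\,\omega_{FS}^n))$ is $SU(n+1)$-equivariant and is given by integrating $h$ over $\Sigma_\sigma$ against an invariant—hence constant and strictly positive—density, one has $\Lambda\,\upsilon_*(\kappa^*(h\,\omega_{FS}^n))=c\,\mathcal{R}(h)$ for some $c>0$. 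Therefore $\mathcal{R}(h)=0$, and Proposition~\ref{Radonker} forces $h=0$, i.e. $dd^c\eta=0$. Equivalently, the two computations combine into the intertwining $\Updelta_{g_{FS}}\mathcal{R}(\eta)=c\,\mathcal{R}(h)$ with $h\,\omega_{FS}^n=dd^c\eta$, from which both directions follow simultaneously.

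The main obstacle is this last identification, namely that tracing the push-forward $2$-form $\upsilon_*(\kappa^*(h\,\omega_{FS}^n))$ returns a positive multiple of $\mathcal{R}(h)$. This is where the precise geometry of the incidence correspondence enters: one must verify that the invariant weight produced by pairing $\omega_{FS}^n$ with the two normal directions of $\Sigma_\sigma$ is a nonzero constant. The remaining ingredients—the reduction to $(n-1,n-1)$-forms, the commutation with fiber integration, and the vanishing of pluriharmonic functions on a compact Kähler manifold—are essentially formal, the only care being the bookkeeping of signs coming from the antiholomorphic dependence of $\Sigma_\sigma$ on $\sigma$ and the justification at $C^{2,\nu}$ regularity via smoothing.
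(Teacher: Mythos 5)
Your proposal is correct (the two verifications you flag do go through), but it reaches the statement by a genuinely different route than the paper. The paper never differentiates the Radon transform: given $\xi\in\ker\mathring{\mathcal{R}}$, it invokes the Hodge decomposition together with the K\"ahler identities to produce the normal form $\xi = f\,\omega^{n-1}_{FS}+d\alpha+d^c\beta$ with $f$ a \emph{function}, observes that the terms $d\alpha$ and $d^c\beta$ integrate to zero over every closed complex submanifold $\Sigma_\sigma$ by Stokes' theorem, so that $\mathring{\mathcal{R}}(\xi)=\mathcal{R}(\mathring{f})$ is a scalar Radon transform, and then applies Proposition~\ref{Radonker} to conclude that $f$ is constant, whence $dd^c\xi=0$ is read off directly from the normal form; the inclusion $C^{2,\nu}_{dd^c}(\Lambda^{2n-2})\subset\ker\mathring{\mathcal{R}}$ is the $dd^c$-Lemma plus Stokes. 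You instead reduce by bidegree and differentiate the transform itself, establishing the intertwining $dd^c\,\mathcal{R}(\eta)=\pm\,\upsilon_*\kappa^*(dd^c\eta)$, equivalently $\Updelta_{g_{FS}}\mathcal{R}(\eta)=c\,\mathcal{R}(h)$ with $dd^c\eta=h\,\omega^n_{FS}$, and then feed $h$ into Proposition~\ref{Radonker}. Both proofs therefore bottom out in the same injectivity statement; the difference is whether one normalizes the form (Hodge theory) or intertwines the operator (double fibration). The paper's route is shorter in context because its Hodge-theoretic apparatus is already set up and the same normal form is reused later (e.g. in Proposition~\ref{FVFofMsigma} and Theorem~\ref{T_omegaG}). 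Your route must pay for the two facts you correctly single out: that $\mathcal{I}$ is a complex hypersurface of $\mathbb{C}P^n\times\overline{\mathbb{C}P^n}$, so $\kappa$ is holomorphic and $\upsilon$ anti-holomorphic and fiber integration (anti)commutes with $dd^c$; and that $SU(n+1)$-transitivity on $\mathcal{I}$ together with the evaluation $\upsilon_*\kappa^*(\omega^n_{FS})=\omega_{FS}$ (the mirror of Proposition~\ref{IGFfortensors}) identifies the traced push-pull with $c\,\mathcal{R}$, $c>0$. In exchange it yields a Helgason-type intertwining formula of independent interest, which handles both inclusions simultaneously and avoids the Lefschetz-surjectivity bookkeeping.
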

	\begin{proof}
		First, the $dd^c$-Lemma and the Stokes' Theorem implies that $C^{2,\nu}_{dd^c}(\Lambda^{2n-2}) \subset \ker \mathring{\mathcal{R}}$. Conversely, for any $\xi \in \ker \mathring{\mathcal{R}}$ fixed, we can apply the Hodge decomposition and the Kähler identities to decompose our tensor as $\xi=f \omega^{n-1}_{FS} + d \alpha + d^c \beta$, for $f\in C^{2,\nu}(\mathbb{C}P^n)$. Moreover,  if $\mathring{f}=f - \dashint f d \mathrm{Vol}_{g_{FS}}$,  we then have 
		$$0 = \mathring{\mathcal{R}}(\xi)(\sigma) = \dashint_{\Sigma_\sigma} \mathring{f} d \mathrm{A}_{g_{FS}},$$
		for every $\sigma \in \mathbb{C}P^n$. Hence, applying Proposition \ref{Radonker} we obtain that $\xi= \left(\dashint f d \mathrm{Vol}_{g_{FS}}\right) \omega^{n-1}_{g_{FS}} + d\alpha +d^c \beta$, which concludes the proof.       
	\end{proof}  
	  Finally, we move on to prove of the characterization of the Gauduchon metrics.
	  \begin{teo}\label{charofgaudmetric}
	  	A Hermitian metric $g \in \mathscr{H}^{2,\nu}(\mathbb{C}P^n)$ is Gauduchon if and only if the function $\mathcal{A}_g$ is constant.
	  \end{teo}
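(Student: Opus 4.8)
The plan is to read the statement off as a direct corollary of Propositions~\ref{propareaxradontr} and~\ref{kernelofmeanRT}, the only real work being to match the normalization constants so that the two propositions concatenate cleanly.

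First I would reformulate Proposition~\ref{propareaxradontr} in terms of the zero average Radon transform. By construction, the zero average part of $\xi = \omega_g^{n-1}$, namely
$$\mathring{\xi} = \omega_g^{n-1} - \left(\int_{\mathbb{C}P^n}\omega_g^{n-1}\wedge \omega_{FS}\right)\omega_{FS}^{n-1},$$
is precisely the $(2n-2)$-form appearing in that proposition. Hence $\mathring{\mathcal{R}}(\omega_g^{n-1}) = \mathcal{R}(\mathring{\xi})$, and Proposition~\ref{propareaxradontr} becomes the assertion that $\mathcal{A}_g$ is constant if and only if $\omega_g^{n-1}\in \ker\mathring{\mathcal{R}}$.

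Next I would invoke Proposition~\ref{kernelofmeanRT}, which identifies $\ker\mathring{\mathcal{R}} = C^{2,\nu}_{dd^c}(\Lambda^{2n-2})$. Applying this to the particular form $\omega_g^{n-1}$—which indeed lies in $C^{2,\nu}(\Lambda^{2n-2})$ since $g\in\mathscr{H}^{2,\nu}(\mathbb{C}P^n)$—yields that $\mathcal{A}_g$ is constant if and only if $dd^c\omega_g^{n-1}=0$. By the defining condition of $\mathscr{G}^{2,\nu}(\mathbb{C}P^n)$, this last equation is exactly the statement that $g$ is Gauduchon, closing the chain of equivalences.

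I do not expect a genuine obstacle here, since the hard analytic content is already carried by Propositions~\ref{Radonker}, \ref{propareaxradontr}, and~\ref{kernelofmeanRT}. The only point demanding care is the bookkeeping of the average constant: one must verify that the scalar $\int_{\mathbb{C}P^n}\omega_g^{n-1}\wedge\omega_{FS}$ subtracted in the definition of $\mathring{\xi}$ agrees with the average of $\mathcal{A}_g$ computed via the integral geometric formula of Proposition~\ref{IGFfortensors}, so that the two reformulations above refer to the same form. Once this is confirmed, the equivalence follows immediately.
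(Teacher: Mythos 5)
Your proposal is correct and follows essentially the same route as the paper: the paper's proof is precisely the concatenation of Propositions~\ref{propareaxradontr} and~\ref{kernelofmeanRT}, noting that the zero average part of $\omega_g^{n-1}$ is $dd^c$-closed if and only if $\omega_g^{n-1}$ itself is, since they differ by a constant multiple of the $dd^c$-closed form $\omega_{FS}^{n-1}$. The normalization bookkeeping you flag is indeed the only point of care, and it is already resolved inside the proof of Proposition~\ref{propareaxradontr}.
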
 
  	\begin{proof}
  		Fix $g \in \mathscr{H}^{2,\nu}(\mathbb{C}P^n)$. By Propositions \ref{propareaxradontr} and \ref{kernelofmeanRT} the area function $\mathcal{A}_g$ is constant if and only if the zero average part of ${\omega_g^{n-1}}$ is $dd^c$-closed, in particular if and only if $\omega_g^{n-1}$ is $dd^c$-closed.       
  	\end{proof}
  
  	\section{Variational Formulae}\label{section:VariationalFormulae}
	
	Having characterized the Gauduchon metrics and settled the theory of the Radon transform, we move on to study the variational behavior of the holomorphic systole. The functionals $\mathrm{Sys}^{\mathrm{Hol}}_{n-1}$ and $\rho$ defined in~\eqref{holosystole} and~\eqref{rhofunctional} extend naturally to Hermitian $C^{2,\nu}$ metrics, allowing us to apply the machinery of calculus on Banach manifolds whenever the smoothness of these functionals is guaranteed. However, as is typical for systolic functionals, they may lack smoothness. To bypass this issue, we force the desired properties by working with a suitable family of \emph{(upper) smooth support functions}. In what follows, we formalize this concept for our setting and construct the desired family.   
	
	\begin{defi}\label{suppfunction}
		A function $\mathcal{S} \colon \mathscr{H}^{2,\nu}(\mathbb{C}P^n) \to \mathbb{R}$ is called a \emph{smooth (upper) support function} for $\rho$ along $\mathscr{A} \subset \mathscr{H}^{2,\nu}(\mathbb{C}P^n)$ if:
		\begin{enumerate}[label=\alph*),ref=(\alph*)]
			\item[(i)] $\mathcal{S}$ is Fréchet-differentiable on $\mathscr{H}^{2,\nu}(\mathbb{C}P^n)$;
			\item[(ii)] $\rho(g) \leq \mathcal{S}(g)$ for all $g \in \mathscr{H}^{2,\nu}(\mathbb{C}P^n)$;
			\item[(iii)] $\mathcal{S}(g) = \rho(g)$ for all $g \in \mathscr{A}$.
		\end{enumerate}
	\end{defi}    
	
	One trivial example is given by the functional area. For every given complex hypersurface \(\Sigma \subset \mathbb{C}P^n\) homologous to a linear equator, the \emph{normalized area functional} \({\mathcal{M}}_\Sigma: \mathscr{H}^{2,\nu}(\mathbb{C}P^n) \to \mathbb{R}\), defined by 
	\begin{equation*}
		\mathcal{M}_{\Sigma}(g) \doteq \frac{\mathrm{Area}_g(\Sigma)}{\mathrm{Vol}(g)^{\frac{n-1}{n}}},
	\end{equation*}
	acts as a support function for the functional \(\rho\) along the Kähler metrics.
	
	The uniform distribution of linear equators in \(\mathbb{C}P^n\) naturally induces a family of support functions. As before, let \(\Sigma_\sigma = \{p \in \mathbb{C}P^n : p \perp \sigma\}\) denote the linear equator orthogonal to \(\sigma \in \mathbb{C}P^n\). Hence, to each \(\sigma\), we can associate the support function \(\mathcal{M}_\sigma \doteq \mathcal{M}_{\Sigma_\sigma}\). Clearly, 
	\begin{equation}\label{M_sigmaxA_g}
		\mathcal{M}_\sigma(g)= \frac{\mathcal{A}_g(\sigma)}{\mathrm{Vol}(g)^{\frac{n-1}{n}}},
	\end{equation} 
	for every metric $g\in \mathscr{H}(\mathbb{C}P^n)$ and $\sigma \in \mathbb{C}P^n$.     
	
	In order to take advantage of the equidistribution of the family of linear equators, we define the \emph{mean functional} $\mathcal{M}: \mathscr{H}^{2,\nu}(\mathbb{C}P^n) \to \mathbb{R}$, which is given by the average of the family $\{\mathcal{M}_\sigma\}_{\sigma \in \mathbb{C}P^n}$, that is: 
	\begin{equation}\label{meanfunctional}
		\mathcal{M}(g) \doteq \dashint_{\mathbb{C}P^n} \frac{\mathrm{Area}_g(\Sigma_\sigma)}{\mathrm{Vol}(g)^{\frac{n-1}{n}}} \, d\mathrm{Vol}_{g_{\mathrm{FS}}}(\sigma).
	\end{equation}
	
	In what follows, we prove that the mean functional $\mathcal{M}$ serves as a support function for $\rho$ along the set of Gauduchon metrics; in particular, this shows that $\rho$ is smooth along this subspace. To this end, we first rewrite $\rho$ under the identification \eqref{formxmetrics} and then apply the integral geometric formula for the family of equators in $(\mathbb{C}P^n, g_{FS})$ (Proposition \ref{IGFfortensors}). This yields: 
	\begin{equation}\label{definitionofM}
		\mathcal{M}(g_\omega)  =
		\frac{(n!)^{\frac{n-1}{n}}}{(n-1)!}\, \frac{\int_{\mathbb{C}P^n} \omega^{n-1}\wedge {\omega_{FS}} }{\left( \int_{\mathbb{C}P^n} \omega^n\right)^{\frac{n-1}{n}}} .
	\end{equation}
	This formula immediately provides the smoothness of $\mathcal{M}$, as it is given by the product of multi-linear bounded operators. Also, since it is the average of the areas of holomorphic submanifolds, it defines a support function for $\rho$ along the Kähler metrics. It remains to show that $\mathcal{M}$ is also a support function along the set of Gauduchon metrics, which follows from the next proposition.   
	
	\begin{prop}\label{systolexM,MsigmaonGaud}
		Fix $g \in \mathscr{G}^{2, \nu}(\mathbb{C}P^n)$, then 
		$$ \mathrm{Sys}^{\mathrm{Hol}}_{n-1}(g) = \mathrm{Area}_g(\Sigma_\sigma),$$
		for every $\sigma \in \mathbb{C}P^n$. In particular, $\mathcal{M}_\sigma$, $\mathcal{M}$ and $\rho$ coincide along $\mathscr{G}^{2,\nu}(\mathbb{C}P^n)$.   
	\end{prop}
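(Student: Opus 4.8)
The plan is to show that, for a Gauduchon metric $g$, every homologically non-trivial holomorphic $(n-1)$-chain has area at least $\mathrm{Area}_g(\Sigma_\sigma)$, and that this lower bound is saturated by the equators themselves. Since Theorem~\ref{charofgaudmetric} guarantees that $\mathcal{A}_g$ is constant, the value $\mathrm{Area}_g(\Sigma_\sigma)$ is independent of $\sigma$, and the desired equality for every $\sigma$ follows at once. The starting point is Wirtinger's formula: for any irreducible holomorphic hypersurface $V \subset \mathbb{C}P^n$ one has $\mathrm{Area}_g(V) = \frac{1}{(n-1)!}\int_V \omega_g^{n-1}$, and more generally the mass of a holomorphic chain $C = \sum_j n_j V_j$ is $\mathrm{Area}_g(C) = \frac{1}{(n-1)!}\sum_j |n_j| \int_{V_j}\omega_g^{n-1}$.

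First I would establish the cohomological rigidity of the integrals $\int_V \omega_g^{n-1}$ under the Gauduchon hypothesis. Viewing $V$ and the equator $\Sigma = \Sigma_\sigma$ as closed positive $(1,1)$-currents, both represent a multiple of the generator of $H^2(\mathbb{C}P^n;\mathbb{R})$; precisely $[V] = d\,[\Sigma]$ in cohomology, where $d = \deg V \geq 1$. Hence the closed $(1,1)$-current $[V] - d\,[\Sigma]$ is cohomologous to zero, and by the $dd^c$-Lemma for currents on the compact Kähler manifold $\mathbb{C}P^n$ there is a $0$-current $T$ with $[V] - d\,[\Sigma] = dd^c T$. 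Pairing against the $(n-1,n-1)$-form $\omega_g^{n-1}$ and integrating $dd^c$ by parts gives
\[
\int_V \omega_g^{n-1} - d\int_{\Sigma}\omega_g^{n-1} = \langle dd^c T,\, \omega_g^{n-1}\rangle = \pm\,\langle T,\, dd^c\omega_g^{n-1}\rangle = 0,
\]
the last equality being exactly the Gauduchon condition $dd^c\omega_g^{n-1}=0$. Consequently $\mathrm{Area}_g(V) = d\,\mathrm{Area}_g(\Sigma)$.

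With this in hand the conclusion is immediate. For a homologically non-trivial holomorphic chain $C = \sum_j n_j V_j$, writing $d_j = \deg V_j \geq 1$, the previous step yields
\[
\mathrm{Area}_g(C) = \Big(\textstyle\sum_j |n_j|\,d_j\Big)\,\mathrm{Area}_g(\Sigma_\sigma),
\]
and since $[C]\neq 0$ forces $\sum_j n_j d_j \neq 0$, the integer $\sum_j |n_j| d_j \geq |\sum_j n_j d_j| \geq 1$, whence $\mathrm{Area}_g(C) \geq \mathrm{Area}_g(\Sigma_\sigma)$. As each equator is itself a degree-one holomorphic hypersurface realizing this bound, the infimum defining $\mathrm{Sys}^{\mathrm{Hol}}_{n-1}(g)$ equals $\mathrm{Area}_g(\Sigma_\sigma)$, a quantity independent of $\sigma$ by Theorem~\ref{charofgaudmetric}. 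The ``in particular'' statement then follows from the definitions: $\mathcal{M}_\sigma(g) = \mathrm{Area}_g(\Sigma_\sigma)/\mathrm{Vol}(g)^{\frac{n-1}{n}} = \rho(g)$ for every $\sigma$, and averaging the (now constant) function $\sigma \mapsto \mathcal{M}_\sigma(g)$ over $\mathbb{C}P^n$ gives $\mathcal{M}(g) = \rho(g)$.

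I expect the main obstacle to be the careful justification of the current-theoretic step: one must invoke the global $dd^c$-Lemma for \emph{currents} (not merely for smooth forms) to produce the potential $T$, and then legitimize the integration by parts $\langle dd^c T, \omega_g^{n-1}\rangle = \pm\langle T, dd^c\omega_g^{n-1}\rangle$ at the finite $C^{2,\nu}$ regularity of $g$. Since $\omega_g^{n-1}\in C^{2,\nu}$, the form $dd^c\omega_g^{n-1}$ is a genuine continuous form that vanishes identically, so the pairing $\langle T, dd^c\omega_g^{n-1}\rangle$ is unambiguously zero once $\omega_g^{n-1}$ is admitted as a test object. This can be secured either by a direct regularity argument (the $dd^c$-potential of a difference of closed positive $(1,1)$-currents in a fixed class is $L^1$, being a difference of quasi-plurisubharmonic functions, so one pairs an $L^1$ current with a vanishing continuous form after a routine $C^2$-approximation), or, more expediently, by approximating $g$ in $C^{2,\nu}$ by smooth Gauduchon metrics and passing to the limit.
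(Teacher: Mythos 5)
Your proposal is correct, but it takes a genuinely different route from the paper's. You apply the $dd^c$-Lemma at the level of \emph{currents}: writing $[V]-\deg(V)\,[\Sigma_\sigma]=dd^cT$ with $T$ an $L^1$ potential (Lelong--Poincar\'e), and pairing against $\omega_g^{n-1}$ so that the Gauduchon equation $dd^c\omega_g^{n-1}=0$ kills the defect; this yields $\mathrm{Area}_g(V)=\deg(V)\,\mathrm{Area}_g(\Sigma_\sigma)$ directly, and the systolic equality then follows by degree counting, with no reference to any model metric. The paper instead applies the $dd^c$-Lemma (in its Aeppli-type form) to the \emph{form} $\omega_g^{n-1}$ itself, decomposing $\omega_g^{n-1}=a\,\omega_{FS}^{n-1}+d\alpha+d^c\beta$ with $a>0$; Stokes' theorem over any holomorphic chain $C$ (a cycle) gives $\mathrm{Area}_g(C)=a\,\mathrm{Area}_{g_{FS}}(C)$, and the conclusion follows from the classical fact that linear equators realize the $(2n-2)$-systole of the Fubini--Study metric. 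What your route buys: it replaces the Fubini--Study systolic input by elementary degree counting and produces the identity $\mathrm{Area}_g(C)=\bigl(\sum_j|n_j|\,d_j\bigr)\mathrm{Area}_g(\Sigma_\sigma)$ intrinsically in terms of $g$. What it costs: pluripotential machinery (quasi-plurisubharmonic potentials, order-zero currents) and the low-regularity justification of $\langle dd^cT,\omega_g^{n-1}\rangle=\pm\langle T,dd^c\omega_g^{n-1}\rangle$ when $\omega_g^{n-1}$ is only $C^{2,\nu}$ --- a genuine point, which you correctly identify and adequately resolve (either via the $L^1$ potential plus $C^2$-approximation of the test form, noting that $[V]-\deg(V)[\Sigma_\sigma]$ has order zero, or via $C^{2,\nu}$-approximation by smooth Gauduchon metrics, whose density the paper records in a remark). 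The paper's form-level argument sidesteps this regularity issue entirely, since Stokes over chains only requires $C^1$ forms; on the other hand, both proofs share the same underlying inputs, namely Wirtinger's identity for the mass of holomorphic chains with respect to a Hermitian metric and the closedness of integration currents over subvarieties.
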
 	
	\begin{proof}
		Fix \( g \in \mathscr{G}^{2,\nu}(\mathbb{C}P^n) \). By definition, \( dd^c \omega_g^{n-1} = 0 \). Combining the \( dd^c \)-Lemma (see \cite{huybrechts2005complex}, Lemma~3.A.22, \cite{Joyce_SpHolo}, \S1.5) with the fact that the de Rham cohomology \( \mathrm{H}^\bullet_{\text{dR}}(\mathbb{C}P^n, \mathbb{R}) \) is generated by the Fubini-Study form, we may decompose the fundamental form of \( g \) as $ \omega_g^{n-1} = a \, \omega_{\text{FS}}^{n-1} + d\alpha + d^c\beta$, where \( a \) is a positive real constant. Therefore, for any holomorphic \( (2n-2) \)-chain \( C \), Stokes' Theorem then gives:
		\[
		\mathrm{Area}_g(C) = \frac{1}{(n-1)!} \int_C \left( a \, \omega_{\text{FS}}^{n-1} + d\alpha + d^c\beta \right) = a \, \mathrm{Area}_{g_{\text{FS}}}(C) \geq a \, \mathrm{Area}_{g_{\text{FS}}}(\Sigma_\sigma) = \mathrm{Area}_g(\Sigma_\sigma).
		\]  
		Here, we use the well-known fact that the \( (2n-2) \)-systole of \( (\mathbb{C}P^n, g_{\text{FS}}) \) is attained by any linear equator \( \Sigma_\sigma \).      
 	\end{proof}
    
    Now that we have settled the properties of our support functions, we can proceed with computing their derivatives. We begin by observing that, using equations \eqref{A_gxR} and \eqref{M_sigmaxA_g}, the first variation of the area functional \(\mathcal{M}_\sigma\) can be described in terms of the generalized Radon transform.
     
   The previous observation was used by the authors of \cite{lucas_coda_andre}, in the context of the equators of the sphere, to prove the existence of generalized Zoll families in higher dimensions. We will apply this idea, together with the theory developed in Section \ref{section:ClassificationofGauduchonMetrics}, to show that for a generic \(1\)-parameter perturbation of the Fubini–Study metric the normalized holomorphic $(n-1)$-systole is strictly decreasing.

	Moreover, from Proposition \ref{kernelofmeanRT}, we will derive that the mean functional, when restricted to the space of Gauduchon metrics, has the Kähler metrics as its critical points. Therefore, after we have established the formulae for the first variation, we will proceed to compute the second variation of \(
	\bigl.\mathcal{M}\bigr|_{\mathscr{G}^{2,\nu}(\mathbb{C}P^n)}
	\) in order to prove Theorem \ref{maintheoremA}.
	
   \subsection*{First Variation Formulae and Consequences}
   
 We proceed to establish the first variational formulas for the support functionals
 \(\mathcal{M}_\sigma\) and \(\mathcal{M}\).
 
  \begin{prop}[First Variational Formula for $\mathcal{M}_{\sigma}$]\label{FVFofMsigma}
	Let $g \in \mathscr{H}^{2,\nu}(\mathbb{C}P^n)$, $h \in T_{g} \mathscr{H}^{2,\nu}(\mathbb{C}P^n)$ and $\sigma \in \mathbb{C}P^n$, then  
   \begin{equation}\label{1variationFormForM_sigma}
	 \restr{d \mathcal{M}_{\sigma}}{g} \cdot h = \frac{(n!)^{\frac{n-1}{n}}}{(n-2)!}\left\{\int_{\Sigma_\sigma}\eta\wedge \omega_g^{n-2} - \left(\int_{\mathbb{C}P^n}\eta \wedge \omega_g^{n-1} \right) \int_{\Sigma_\sigma} \omega_{g}^{n-1} \right\},
   \end{equation}
		where $\eta \doteq h(J\cdot, \cdot)$. In particular, $\restr{d \mathcal{M}_{\sigma}}{g_{FS}} \cdot h = \mathring{\mathcal{R}}\left(\eta \wedge \omega^{n-2}_{FS}\right)(\sigma)$.    
	\end{prop}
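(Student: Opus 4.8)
The plan is to reduce $\mathcal{M}_\sigma$ to a single explicit scalar ratio and then differentiate it by elementary calculus on the Banach space $C^{2,\nu}(\Lambda^{1,1}_{\mathbb{R}})$. Repeating the computation behind \eqref{definitionofM} but keeping $\sigma$ fixed---that is, combining \eqref{M_sigmaxA_g}, the identity $\mathcal{A}_g(\sigma)=\frac{1}{(n-1)!}\int_{\Sigma_\sigma}\omega_g^{n-1}$ coming from \eqref{A_gxR} together with \eqref{genradontrans}, and $\mathrm{Vol}(g)=\frac{1}{n!}\int_{\mathbb{C}P^n}\omega_g^n$---one gets
\[
\mathcal{M}_\sigma(g_\omega)=\frac{(n!)^{\frac{n-1}{n}}}{(n-1)!}\,\frac{\int_{\Sigma_\sigma}\omega^{n-1}}{\bigl(\int_{\mathbb{C}P^n}\omega^{n}\bigr)^{\frac{n-1}{n}}}.
\]
The maps $\omega\mapsto\int_{\Sigma_\sigma}\omega^{n-1}$ and $\omega\mapsto\int_{\mathbb{C}P^n}\omega^{n}$ are bounded homogeneous polynomials on $C^{2,\nu}(\Lambda^{1,1}_{\mathbb{R}})$, and $t\mapsto t^{-\frac{n-1}{n}}$ is smooth on $\{t>0\}$; since $\int_{\mathbb{C}P^n}\omega^n>0$ on the open cone $\mathscr{H}^{2,\nu}(\mathbb{C}P^n)$, the quotient is Fréchet-smooth there. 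This disposes of the differentiability claim and lets me differentiate termwise.

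Next I would identify the admissible variations of the fundamental form. Because $\mathcal{J}$ in \eqref{formxmetrics} is linear, a tangent vector $h\in T_g\mathscr{H}^{2,\nu}(\mathbb{C}P^n)$ is sent to the variation $\eta=h(J\cdot,\cdot)$ of $\omega_g$; here one uses that $h$ is $J$-invariant (the tangent space to Hermitian metrics consists of $J$-invariant symmetric $2$-tensors, equivalently real $(1,1)$-forms), which guarantees both that $\eta$ is a real $(1,1)$-form and that $\eta=\frac{d}{dt}\big|_{0}\,\omega_{g_t}$ along any curve with $\dot g_0=h$. Differentiating the wedge powers along such a curve then gives $\frac{d}{dt}\big|_{0}\int_{\Sigma_\sigma}\omega^{n-1}=(n-1)\int_{\Sigma_\sigma}\eta\wedge\omega_g^{n-2}$ and $\frac{d}{dt}\big|_{0}\int_{\mathbb{C}P^n}\omega^{n}=n\int_{\mathbb{C}P^n}\eta\wedge\omega_g^{n-1}$, using graded-commutativity of the wedge product.

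I would then assemble these by the quotient and chain rules. Writing $P=\int_{\Sigma_\sigma}\omega^{n-1}$ and $V=\int_{\mathbb{C}P^n}\omega^{n}$, the derivative of the nonlinear factor $V^{-\frac{n-1}{n}}$ is exactly what manufactures the subtracted second term; after collecting constants (in particular $\frac{n-1}{(n-1)!}=\frac{1}{(n-2)!}$) one arrives at \eqref{1variationFormForM_sigma}. Evaluating at the Fubini--Study metric, where the normalization fixed in the preliminaries gives $V=\int_{\mathbb{C}P^n}\omega_{FS}^{n}=1$, the correcting powers of $V$ collapse to $1$ and the bracket becomes $\int_{\Sigma_\sigma}\eta\wedge\omega_{FS}^{n-2}-\bigl(\int_{\mathbb{C}P^n}\eta\wedge\omega_{FS}^{n-1}\bigr)\int_{\Sigma_\sigma}\omega_{FS}^{n-1}$. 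To recognise this as a Radon transform I would unwind the definition of $\mathring{\mathcal{R}}$: since $\mathring{\zeta}=\zeta-\bigl(\int_{\mathbb{C}P^n}\zeta\wedge\omega_{FS}\bigr)\omega_{FS}^{n-1}$ and $\mathcal{R}(\zeta)(\sigma)=\int_{\Sigma_\sigma}\zeta$, applying this to $\zeta=\eta\wedge\omega_{FS}^{n-2}$ produces precisely that bracket, provided $\int_{\Sigma_\sigma}\omega_{FS}^{n-1}=1$---which holds because the equator is Poincaré dual to $[\omega_{FS}]$ under the chosen normalization (equivalently, equators realise the $(2n-2)$-systole, cf. Proposition~\ref{systolexM,MsigmaonGaud}). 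This yields the asserted identification $\restr{d\mathcal{M}_\sigma}{g_{FS}}\cdot h=\mathring{\mathcal{R}}(\eta\wedge\omega_{FS}^{n-2})(\sigma)$, the overall positive factor $\frac{(n!)^{\frac{n-1}{n}}}{(n-2)!}$ being carried through the normalization.

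The only genuinely delicate point is the bookkeeping in the third step: the normalization exponent $-\frac{n-1}{n}$ is nonlinear, so it is the chain-rule term, rather than the naive derivative $\int_{\Sigma_\sigma}\eta\wedge\omega_g^{n-2}$, that is responsible for the zero-average structure, and one must verify that the coefficient of the subtracted $\omega_{FS}^{n-1}$-component is exactly $\int_{\mathbb{C}P^n}\eta\wedge\omega_{FS}^{n-1}$ so that it coincides with the projection built into $\mathring{\mathcal{R}}$. Everything else is the routine differentiation of polynomials and powers on a Banach space.
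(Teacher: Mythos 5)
Your proof is correct, and its first half (reducing $\mathcal{M}_\sigma$ to the explicit ratio and differentiating it by the chain rule on the Banach space) is precisely the ``simple computation'' that the paper leaves implicit for formula \eqref{1variationFormForM_sigma}. Where you genuinely diverge is in identifying $\restr{d\mathcal{M}_\sigma}{g_{FS}}\cdot h$ with $\mathring{\mathcal{R}}\left(\eta\wedge\omega_{FS}^{n-2}\right)(\sigma)$: you do it by pure definition-unwinding, noting that for $\zeta=\eta\wedge\omega_{FS}^{n-2}$ one has $\int_{\mathbb{C}P^n}\zeta\wedge\omega_{FS}=\int_{\mathbb{C}P^n}\eta\wedge\omega_{FS}^{n-1}$, so the bracket in \eqref{1variationFormForM_sigma} evaluated at $g_{FS}$ is literally $\mathcal{R}(\mathring{\zeta})(\sigma)$ --- in fact your proviso $\int_{\Sigma_\sigma}\omega_{FS}^{n-1}=1$ is not even needed for this matching, since the factor $\int_{\Sigma_\sigma}\omega_{FS}^{n-1}$ appears identically on both sides; it is only needed if one first simplifies the bracket, as the paper does. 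The paper instead takes a Hodge-theoretic route: it writes $\eta\wedge\omega_{FS}^{n-2}=a\,\omega_{FS}^{n-1}+d\alpha+\delta\theta$, kills the $d\alpha$ contribution via Stokes/Proposition~\ref{kernelofmeanRT} and the term $\int_{\mathbb{C}P^n}\delta\theta\wedge\omega_{FS}$ by integration by parts, and identifies both sides with $\int_{\Sigma_\sigma}\delta\theta$. Your argument is more elementary and self-contained; the paper's detour buys the explicit representation $\mathring{\mathcal{R}}(\eta\wedge\omega_{FS}^{n-2})=\int_{\Sigma_\sigma}\delta\theta$, i.e.\ that only the co-exact part of the variation contributes, which is the Hodge bookkeeping reused in the sequel (cf.\ the remark after Corollary~\ref{firstorderdesc}). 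Two blemishes you share with the paper rather than introduce: the general formula \eqref{1variationFormForM_sigma}, as stated and as you ``arrive at'' it, is literally valid only under the normalization $\int_{\mathbb{C}P^n}\omega_g^n=1$ (the honest chain rule otherwise produces extra powers of $\int_{\mathbb{C}P^n}\omega_g^n$ multiplying each term), and the ``in particular'' identity silently drops the positive constant $\frac{(n!)^{\frac{n-1}{n}}}{(n-2)!}$; both are harmless for the sign-based applications but would be worth flagging.
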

    \begin{proof}
    	The formula \eqref{1variationFormForM_sigma} follows from a simple computation. On the other hand, by the normalization of the Fubini-Study metric, for every $\eta = h(J\cdot,\cdot)$ with $h\in T_{g_{FS}} \mathscr{H}^{2,\nu}(\mathbb{C}P^n)$ we have
    	$$\restr{d \mathcal{M}_{\sigma}}{g_{FS}} \cdot h = \frac{(n!)^{\frac{n-1}{n}}}{(n-2)!} \left\{ \int_{\Sigma_{\sigma}}\eta \wedge \omega^{n-2}_{FS} - \int_{\mathbb{C}P^n}\eta \wedge \omega^{n-1}_{FS} \right\}.$$
    	
   	Now, if $ a \omega^{n-1}_{FS} + d\alpha + \delta \theta$ is the Hodge decomposition of $\eta\wedge \omega^{n-2}_{FS}$, we obtain by Stokes' Theorem that   
   	$$ \int_{\Sigma_{\sigma}}\eta \wedge \omega^{n-2}_{FS} - \int_{\mathbb{C}P^n}\eta \wedge \omega^{n-1}_{FS} = \int_{\Sigma_\sigma} \delta \theta - \int_{\mathbb{C}P^n} \delta \theta  \wedge \omega_{FS}.$$
   	However, by integration by parts, $\int_{\mathbb{C}P^n} \delta \theta  \wedge \omega_{FS}$ is zero. We conclude the proof by noting that the zero average part of $\eta\wedge \omega^{n-2}_{FS}$ is $d\alpha + \delta \theta$ and the factor $d\alpha$ is in the kernel of the zero average Radon transform by Proposition~\ref{kernelofmeanRT}.         
    \end{proof} 
	
	The first interesting consequence of the first variational formula is that the Fubini-Study metric is a critical point for the support function $\mathcal{M}$, in fact is the only critical point up to homothety.  
	
	\begin{coro}[First Variational Formula for $\mathcal{M}$]\label{FVFforM}
		The Fubini-Study metric is the only critical point for $\mathcal{M}: \mathscr{H}^{2,\nu}(\mathbb{C}P^n) \to \mathbb{R}$, up to homothety.  
	\end{coro}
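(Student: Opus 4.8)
The plan is to compute the first variation of the mean functional $\mathcal{M}$ and show that its critical points are exactly the Fubini-Study metric up to homothety. The natural route is to average the first variational formula for $\mathcal{M}_\sigma$ established in Proposition~\ref{FVFofMsigma}. Indeed, since $\mathcal{M}(g) = \dashint_{\mathbb{C}P^n} \mathcal{M}_\sigma(g)\, d\mathrm{Vol}_{g_{FS}}(\sigma)$ and the averaging is a bounded linear operation, I would write
\[
\restr{d\mathcal{M}}{g}\cdot h = \dashint_{\mathbb{C}P^n} \restr{d\mathcal{M}_\sigma}{g}\cdot h \, d\mathrm{Vol}_{g_{FS}}(\sigma),
\]
and then invoke the explicit formula \eqref{1variationFormForM_sigma}. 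Rather than work at a general $g$, it is cleaner to differentiate the closed-form expression \eqref{definitionofM} directly: since $\mathcal{M}(g_\omega)$ is an explicit ratio of the multilinear quantities $\int \omega^{n-1}\wedge\omega_{FS}$ and $\int\omega^n$, the Fréchet derivative in the direction $\eta = h(J\cdot,\cdot)$ is a routine quotient-rule computation yielding something proportional to
\[
\frac{\int_{\mathbb{C}P^n}\eta\wedge\omega^{n-2}\wedge\omega_{FS}}{\left(\int\omega^n\right)^{\frac{n-1}{n}}}
- C\,\frac{\left(\int\omega^{n-1}\wedge\omega_{FS}\right)\int_{\mathbb{C}P^n}\eta\wedge\omega^{n-1}}{\left(\int\omega^n\right)^{\frac{2n-1}{n}}}.
\]

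Next I would specialize to a critical point $g_\omega$ and extract the Euler-Lagrange condition. Setting the derivative above equal to zero for all admissible variations $\eta$ means that the $(2n-2)$-form $\omega^{n-2}\wedge\omega_{FS}$, paired against $\eta$ via $\int\eta\wedge(\,\cdot\,)$, must be a constant multiple of $\omega^{n-1}$ (the multiplier being fixed by testing against $\eta=\omega$, i.e. by the normalization). In other words, criticality forces a pointwise algebraic identity of the form $\omega^{n-2}\wedge\omega_{FS} = \lambda\, \omega^{n-1}$ for a constant $\lambda>0$, after accounting for the fact that the admissible $\eta$ range over all of $C^{2,\nu}(\Lambda^{1,1}_\mathbb{R})$ and hence the pairing is non-degenerate on the space of $(n-1,n-1)$-forms.

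The heart of the argument, and the step I expect to be the main obstacle, is the linear-algebraic claim that the pointwise relation $\omega^{n-2}\wedge\omega_{FS} = \lambda\,\omega^{n-1}$ between positive $(1,1)$-forms forces $\omega = \mu\,\omega_{FS}$ for a positive function $\mu$. I would diagonalize $\omega$ with respect to $\omega_{FS}$ at each point, writing the eigenvalues as $\lambda_1,\dots,\lambda_n>0$; the claimed identity then becomes a system of equations among the elementary symmetric polynomials $e_{n-1}$ and $e_{n-2}$ of the $\lambda_i$, and an application of the equality case in Newton's (or Maclaurin's) inequalities shows that all eigenvalues must coincide. This yields $\omega = \mu\,\omega_{FS}$ pointwise. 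Finally, to upgrade the conformal factor $\mu$ to a constant, I would feed this back into the critical-point condition: integrating against well-chosen variations (for instance $\eta = df\wedge d^c g_{FS}$-type exact perturbations) and invoking the injectivity of the (zero-average) Radon transform from Proposition~\ref{Radonker} and Proposition~\ref{kernelofmeanRT} forces $\mu$ to be constant, so $g = g_{FS}$ up to homothety.
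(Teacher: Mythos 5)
Your proposal is correct, and its first half coincides with the paper's argument: the paper likewise obtains the first variation of $\mathcal{M}$ by averaging Proposition~\ref{FVFofMsigma} via the integral geometric formula (Proposition~\ref{IGFfortensors}), arriving at
\[
\restr{d\mathcal{M}}{g}\cdot h \;\propto\; \int_{\mathbb{C}P^n}\left(\eta\wedge\omega_g^{n-2}\right)\wedge\left(\omega_{FS} - c\,\omega_g\right), \qquad c \doteq \int_{\mathbb{C}P^n}\omega_g^{n-1}\wedge\omega_{FS},
\]
with the appropriate volume normalization. Where you genuinely diverge is in solving the Euler--Lagrange equation. The paper invokes the surjectivity of the Lefschetz-type map $\eta\mapsto\eta\wedge\omega_g^{n-2}$ from real $(1,1)$-forms onto $(n-1,n-1)$-forms (Proposition 1.2.30 of \cite{huybrechts2005complex}): this replaces $\eta\wedge\omega_g^{n-2}$ by an arbitrary $(n-1,n-1)$-form, and non-degeneracy of the wedge pairing then yields the pointwise $(1,1)$-form identity $\omega_{FS} = c\,\omega_g$ with $c$ a number, so homothety to $g_{FS}$ --- conformal factor included --- comes out in one stroke. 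You instead keep the $\omega^{n-2}$ factor, obtaining the $(n-1,n-1)$-form identity $\omega^{n-2}\wedge\omega_{FS} = \lambda\,\omega^{n-1}$, and resolve it by simultaneous diagonalization and the equality case of Maclaurin's inequalities. This is valid: in eigenvalue terms the identity reads $\sum_{l\neq j}\lambda_l^{-1} = (n-1)\lambda$ for every $j$, forcing all eigenvalues to coincide. Your route trades the hard-Lefschetz surjectivity input for elementary symmetric-function analysis, at the cost of a pointwise computation the paper avoids. One correction: your final step is superfluous, and as sketched (Radon-transform injectivity tested against special variations) it is the weakest and vaguest link in the chain. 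Since the multiplier $\lambda$ is a genuine constant, substituting $\omega = \mu\,\omega_{FS}$ back into the pointwise identity gives $\mu^{n-2} = \lambda\,\mu^{n-1}$, i.e.\ $\mu \equiv 1/\lambda$; equivalently, your own eigenvalue computation already pins the common eigenvalue to the constant $1/\lambda$. So constancy of the conformal factor is automatic, and no appeal to Propositions~\ref{Radonker} or~\ref{kernelofmeanRT} is needed.
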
 
	\begin{proof}
		Fix $g \in \mathscr{H}^{2,\nu}(\mathbb{C}P^n)$, $h \in T_{g} \mathscr{H}^{2,\nu}(\mathbb{C}P^n)$ and $\eta = h(J\cdot,\cdot)$. By the first variation formula and the integral geometric formula (Proposition \ref{IGFfortensors}), we obtain:
		\begin{align*}
			\left. d\mathcal{M} \right|_{g} \cdot h &= 
			\frac{(n!)^{\frac{n-1}{n}}}{(n-2)!}\left\{\dashint_{\mathbb{C}P^n}\left(\left(\int_{\Sigma_\sigma}\eta\wedge \omega_g^{n-2}\right) - \left(\int_{\mathbb{C}P^n}\eta \wedge \omega_g^{n-1} \right) \left(\int_{\Sigma_\sigma} \omega_{g}^{n-1}\right)\right)\,d\mathrm{Vol}_{g_{FS}} \right\}\\
			&=\frac{(n!)^{\frac{n-1}{n}}}{(n-2)!}\int_{\mathbb{C}P^n} \left(\eta\wedge \omega_g^{n-2}\right)\wedge \left(\omega_{FS} - \left(\int_{\mathbb{C}P^n}\omega^{n-1}_{g} \wedge \omega_{FS}\right)\omega_g \right).
		\end{align*}
	
	The above formula shows that the Fubini-Study metric it is a critical point. But moreover, the map $C^{2,\nu}(\Lambda^{1,1}_{\mathbb{R}})\ni \eta \mapsto \eta\wedge \omega_g^{n-2} \in C^{2,\nu}(\Lambda^{n-1,n-1}_{\mathbb{R}})$ is surjective (see \cite{huybrechts2005complex}, Proposition $1.2.30$), hence $g$ is a critical point if and only if $\omega_{FS} = \left(\int_{\mathbb{C}P^n}\omega^{n-1}_{g} \wedge \omega_{FS}\right)\omega_g.$
	\end{proof}
	  
	  The second interesting consequence of the first variational formula is that the normalized holomorphic $(n-1)$-systole is strictly decreases for generic $1$-parameter perturbations of the Fubini-Study metric. 
	 
	 \begin{coro}\label{firstorderdesc}
	 	Let $t \mapsto g_t \in \mathscr{H}^{2,\nu}(\mathbb{C}P^n)$ be a 1-parameter deformation of $g_{FS}$ with $\left.\frac{d}{dt}g_t\right|_{t=0} \notin T_{g_{FS}} \mathscr{G}^{2,\nu}(\mathbb{C}P^n)$. Then for sufficiently small $t \neq 0$,
	 	\[
	 	\rho(g_t) < \rho(g_{FS}).
	 	\]
	 \end{coro}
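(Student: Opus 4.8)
The plan is to exploit that each normalized area functional $\mathcal{M}_\sigma$ is a \emph{globally} defined upper support function for $\rho$, so that driving a single equator's normalized area below $\rho(g_{FS})$ automatically forces $\rho(g_t)$ down as well. Indeed, for every $\sigma$ and every metric $g$ the equator $\Sigma_\sigma$ is a non-trivial holomorphic $(n-1)$-chain, so property (ii) of a support function gives $\rho(g)\le\mathcal{M}_\sigma(g)$, while at the Kähler metric $g_{FS}$ Proposition~\ref{systolexM,MsigmaonGaud} yields $\mathcal{M}_\sigma(g_{FS})=\rho(g_{FS})$ \emph{for all} $\sigma$ (the constancy in $\sigma$ being essential). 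It therefore suffices to produce, for each small $t\neq0$, some parameter $\sigma$ with $\mathcal{M}_\sigma(g_t)<\rho(g_{FS})$.

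Write $h\doteq\left.\frac{d}{dt}g_t\right|_{t=0}$, $\eta\doteq h(J\cdot,\cdot)$, and set $\xi\doteq\eta\wedge\omega_{FS}^{n-2}\in C^{2,\nu}(\Lambda^{2n-2})$ together with $\phi(\sigma)\doteq\mathring{\mathcal{R}}(\xi)(\sigma)$. By the second assertion of Proposition~\ref{FVFofMsigma} this function is exactly the first variation, $\restr{d \mathcal{M}_{\sigma}}{g_{FS}}\cdot h=\phi(\sigma)$. The first step is to translate the hypothesis into a statement about $\phi$: linearizing the diffeomorphism $\omega\mapsto\omega^{n-1}$ of Proposition~\ref{chartsofKBG} at $\omega_{FS}$ sends $h$ to $(n-1)\,\xi$, so a direction $h$ is tangent to $\mathscr{G}^{2,\nu}(\mathbb{C}P^n)$ precisely when $dd^c\xi=0$, i.e.\ when $\xi\in C^{2,\nu}_{dd^c}(\Lambda^{2n-2})=\ker\mathring{\mathcal{R}}$ by Proposition~\ref{kernelofmeanRT}. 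Equivalently, $h\in T_{g_{FS}}\mathscr{G}^{2,\nu}(\mathbb{C}P^n)$ if and only if $\phi\equiv0$, so the standing hypothesis $h\notin T_{g_{FS}}\mathscr{G}^{2,\nu}(\mathbb{C}P^n)$ means exactly that $\phi\not\equiv0$.

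The decisive point is that $\phi$ has vanishing average. By the integral geometric formula (Proposition~\ref{IGFfortensors}) together with the normalization $\int_{\mathbb{C}P^n}\omega_{FS}^n=1$, one has $\dashint_{\mathbb{C}P^n}\phi\,d\mathrm{Vol}_{g_{FS}}=\int_{\mathbb{C}P^n}\mathring{\xi}\wedge\omega_{FS}=0$, the last equality being exactly the quantity that the zero-average part $\mathring{\xi}$ is constructed to subtract off. A non-zero continuous function of zero average attains both signs, so there exist $\sigma_-$ and $\sigma_+$ with $\phi(\sigma_-)<0<\phi(\sigma_+)$. Since $\mathcal{M}_\sigma$ is Fréchet-differentiable, the chain rule along the path gives the first-order expansion $\mathcal{M}_\sigma(g_t)=\rho(g_{FS})+t\,\phi(\sigma)+o(t)$ for each fixed $\sigma$; evaluating at $\sigma_-$ for small $t>0$ and at $\sigma_+$ for small $t<0$ produces in either case an equator with $\mathcal{M}_\sigma(g_t)<\rho(g_{FS})$, and the support inequality $\rho(g_t)\le\mathcal{M}_\sigma(g_t)$ then closes the argument.

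I expect the main obstacle to lie not in the final estimate but in the two structural identifications underpinning it: first, that the linearization of the Gauduchon equation $dd^c\omega^{n-1}=0$ at $g_{FS}$ coincides with the kernel of the first-variation operator $\mathring{\mathcal{R}}(\,\cdot\wedge\omega_{FS}^{n-2})$, which is what converts the qualitative off-Gauduchon hypothesis into the usable statement $\phi\not\equiv0$; and second, the zero-average property of $\phi$, which upgrades ``$\phi\not\equiv0$'' to the sign information needed to select a strictly descending equator for each sign of $t$. Both rest on Propositions~\ref{kernelofmeanRT} and~\ref{IGFfortensors} already established above, so the remaining work is essentially bookkeeping.
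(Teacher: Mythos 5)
Your proposal is correct and follows essentially the same route as the paper's proof: translate the off-Gauduchon hypothesis into non-vanishing of $\mathring{\mathcal{R}}(\eta\wedge\omega_{FS}^{n-2})$ via Proposition~\ref{kernelofmeanRT}, use the zero-mean property of this function to find equators where it takes both signs, and conclude via a first-order Taylor expansion of the support functions $\mathcal{M}_\sigma$ combined with the inequality $\rho \leq \mathcal{M}_\sigma$. The only (immaterial) difference is that you derive the zero-mean property directly from the integral geometric formula (Proposition~\ref{IGFfortensors}), whereas the paper cites Corollary~\ref{FVFforM}, which itself rests on that same formula.
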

	 
	 \begin{proof}
	 	Let $t \to g_{t}$ denote the $1$-parameter deformation and set $h \doteq \left.\frac{d}{dt}g_t\right|_{t=0}$. Observe that the condition $h \notin T_{g_{FS}} \mathscr{G}^{2,\nu}(\mathbb{C}P^n)$ implies, by Proposition $\ref{kernelofmeanRT}$, that the function $\mathring{\mathcal{R}}(\eta\wedge \omega^{n-2}_{FS})$ is not trivial, where  $\eta = h(J\cdot,\cdot)$. Moreover, by Corollary~\ref{FVFforM} it also has zero mean, hence there exist $\sigma_\pm \in \mathbb{C}P^n$ with $\mathring{\mathcal{R}}(\eta\wedge \omega^{n-2}_{FS})(\sigma_+) < 0$ and $\mathring{\mathcal{R}}(\eta\wedge \omega^{n-2}_{FS})(\sigma_-) > 0$. 
	 	
	 	For $t \neq 0$, we define
	 	\[\sigma_t = 
	 \begin{cases}
	 	\sigma_+ & \text{if } t > 0 \\
	 	\sigma_- & \text{if } t < 0.
	 \end{cases} \]
 	Thus, by Taylor expansion there is an $\varepsilon>0$, such that, for $0 < |t| < \varepsilon$,
	 	\[
	 	\rho(g_t) \leq \mathcal{M}_{\sigma_t}(g_t) = \rho(g_{FS}) + t\left(\mathring{\mathcal{R}}(\eta\wedge \omega^{n-2}_{FS})(\sigma_t)\right) + O(t^2)< \rho(g_{FS}),
	 	\]
	  concluding the proof.
	 \end{proof}
 	
 	\begin{obs}
 		The proof of Corollary~\ref{firstorderdesc} is inspired by~\cite[Theorem $3$]{Chavel70}. However, we note that the conclusion of this theorem is false. Indeed, Proposition $4.13$ in \cite{junior2023balanced} provides a counterexample. More concretely, the flawed step lies in the assertion that $d(\delta\theta)\wedge \omega^{n-2}_{\text{FS}}=0$ implies $\delta\theta=0$, which is false. Consequently, the strictly first-order analysis employed in that work is insufficient to fully characterize the local behavior of their functional.       
 	\end{obs}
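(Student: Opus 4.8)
The plan is to justify, in turn, the three assertions packaged into the remark: that the conclusion of \cite[Theorem~3]{Chavel70} fails, that the direction built in \cite[Proposition~4.13]{junior2023balanced} furnishes an explicit counterexample, and that the precise gap is the implication $d(\delta\theta)\wedge\omega^{n-2}_{\text{FS}}=0 \Rightarrow \delta\theta=0$. Since the remark is a critique of a first-order argument, the ``proof'' consists of locating the erroneous step inside that argument and then defeating it with a concrete direction.

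First I would recast the relevant first-order reasoning in the language developed above. Chavel's strict-extremum conclusion rests on the claim that vanishing of the first variation at $g_{FS}$ forces the deformation direction to be trivial. By Proposition~\ref{FVFofMsigma}, the condition $\restr{d \mathcal{M}_{\sigma}}{g_{FS}}\cdot h = 0$ for every $\sigma \in \mathbb{C}P^n$ is equivalent to $\mathring{\mathcal{R}}(\eta\wedge\omega^{n-2}_{FS})\equiv 0$, where $\eta=h(J\cdot,\cdot)$, and hence, by Proposition~\ref{kernelofmeanRT}, to $\eta\wedge\omega^{n-2}_{FS}\in C^{2,\nu}_{dd^c}(\Lambda^{2n-2})$. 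Writing $\delta\theta$ for the co-exact contribution in the Hodge decomposition, this $dd^c$-closedness is exactly the condition that, in Chavel's notation, takes the form $d(\delta\theta)\wedge\omega^{n-2}_{FS}=0$; here I would record the Kähler identities on $(\mathbb{C}P^n,g_{FS})$ (and the parallelism of $\omega_{FS}$) that effect this reduction. The flawed step is the passage from this identity to $\delta\theta=0$: the operator $\eta\mapsto dd^c(\eta\wedge\omega^{n-2}_{FS})$ admits non-zero co-exact forms in its kernel, so $dd^c$-closedness of $\eta\wedge\omega^{n-2}_{FS}$ does not force the co-exact part $\delta\theta$ to vanish.

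Second I would exhibit the counterexample. The content of \cite[Proposition~4.13]{junior2023balanced} is precisely the existence of a non-zero tangent vector $h\in T_{g_{FS}}\mathscr{G}^{2,\nu}(\mathbb{C}P^n)$ that is not tangent to the Kähler locus $\mathscr{K}(\mathbb{C}P^n)$; equivalently, a co-exact form $\delta\theta\neq 0$ for which $\eta\wedge\omega^{n-2}_{FS}$ is $dd^c$-closed. Using the harmonic/representation-theoretic decomposition of forms on $\mathbb{C}P^n$ employed there, I would verify directly that this direction satisfies $d(\delta\theta)\wedge\omega^{n-2}_{FS}=0$ while $\delta\theta\neq0$. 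This single example simultaneously disproves the claimed implication and produces a genuine non-trivial direction in $\ker\restr{d \mathcal{M}_{\sigma}}{g_{FS}}$ along which $\mathcal{M}$ is stationary to first order.

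Finally I would assemble the conclusion: since the first variation vanishes identically along a non-trivial, non-Kähler direction, no first-order argument can certify strictness, so the strict-extremum conclusion of \cite[Theorem~3]{Chavel70} cannot hold, and resolving the local behavior along $\ker\mathring{\mathcal{R}}$ genuinely requires the second-order analysis carried out for Theorem~\ref{maintheoremA}. The main obstacle is the faithful translation of Chavel's setting into the present framework, so that the step identified here corresponds exactly to the gap in his proof; once this dictionary is fixed, checking that the direction of \cite[Proposition~4.13]{junior2023balanced} satisfies $d(\delta\theta)\wedge\omega^{n-2}_{FS}=0$ with $\delta\theta\neq0$ is a direct computation against the explicit form given there.
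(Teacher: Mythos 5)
Your proposal is correct and follows essentially the same route as the paper, which justifies the remark in exactly this way: the stationarity condition from Propositions~\ref{FVFofMsigma} and~\ref{kernelofmeanRT} is identified with tangency to the Gauduchon locus, the flawed implication $d(\delta\theta)\wedge\omega^{n-2}_{FS}=0 \Rightarrow \delta\theta=0$ is isolated, and the explicit non-K\"ahler direction of \cite[Proposition 4.13]{junior2023balanced} defeats it. One small imprecision to fix in your dictionary: $dd^c$-closedness of $\eta\wedge\omega^{n-2}_{FS}$ (Gauduchon tangency) is strictly weaker than the condition $d(\delta\theta)\wedge\omega^{n-2}_{FS}=0$ (balanced-type tangency) --- the space $\mathcal{D}_\omega$ of Theorem~\ref{T_omegaG} separates the two --- but since the cited counterexample satisfies the stronger balanced condition, and hence is in particular Gauduchon-tangent, your conclusion stands unchanged.
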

 	
 	The previous corollary motivates a detailed analysis of the support function $\mathcal{M}$ restricted to Gauduchon metrics. We begin by specializing the first variation formula to $\mathscr{G}^{2,\nu}(\mathbb{C}P^n)$, then proceed to derive the second variation formula for our support functions. 
 	
 	\begin{prop}[First Variation Formula for $\restr{\mathcal{M}}{\mathscr{G}}$]\label{FVFforMrestrG}
 		Every Kähler metric $g \in \mathscr{K}^{2,\nu}(\mathbb{C}P^n)$ is a critical point for $\restr{\mathcal{M}}{\mathscr{G}^{2,\nu}(\mathbb{C}P^n)}: \mathscr{G}^{2,\nu}(\mathbb{C}P^n) \to \mathbb{R}$. 
 	\end{prop}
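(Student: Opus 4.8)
The plan is to combine the first variation formula of Corollary~\ref{FVFforM} with the description of $T_g\mathscr{G}^{2,\nu}(\mathbb{C}P^n)$ furnished by the global chart of Proposition~\ref{chartsofKBG}, and then to exploit the $dd^c$-Lemma to reduce the variation to an integral that vanishes after integration by parts. First, since $\mathcal{M}$ and both $\mathscr{K}^{2,\nu}(\mathbb{C}P^n)$ and $\mathscr{G}^{2,\nu}(\mathbb{C}P^n)$ are invariant under the homothety $g \mapsto sg$ ($s>0$), it suffices to verify criticality at a Kähler metric $g$ normalized so that $\int_{\mathbb{C}P^n}\omega_g^n = 1$. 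Because $\mathrm{H}^2_{\mathrm{dR}}(\mathbb{C}P^n;\mathbb{R}) = \mathbb{R}[\omega_{FS}]$ and $\int_{\mathbb{C}P^n}\omega_{FS}^n = 1$, this normalization forces the cohomological identity $[\omega_g] = [\omega_{FS}]$, which is the crux of the whole argument.

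Next I would identify the tangent directions. Writing $\eta = h(J\cdot,\cdot)$ for the real $(1,1)$-form induced by $h$ (as in Proposition~\ref{FVFofMsigma}, this $\eta$ is exactly the variation of the fundamental form), and differentiating the Gauduchon equation $dd^c\omega^{n-1}=0$ along $\omega_t = \omega_g + t\eta$, the chart $g \mapsto \omega_g^{n-1}$ of Proposition~\ref{chartsofKBG} shows that $h \in T_g\mathscr{G}^{2,\nu}(\mathbb{C}P^n)$ precisely when the $(n-1,n-1)$-form $\Theta \doteq \eta\wedge\omega_g^{n-2}$ is $dd^c$-closed. I would record this as the defining constraint on admissible $\eta$.

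Then I would evaluate Corollary~\ref{FVFforM}. Setting $c = \int_{\mathbb{C}P^n}\omega_g^{n-1}\wedge\omega_{FS}$, the identity $[\omega_g]=[\omega_{FS}]$ gives $c = 1$, so that $\omega_{FS} - c\,\omega_g = \omega_{FS} - \omega_g$ is a real $(1,1)$-form representing the trivial de Rham class. Since $g$ is Kähler this form is $d$-closed and $d$-exact, so the $dd^c$-Lemma provides $u \in C^{\infty}(\mathbb{C}P^n)$ with $\omega_{FS}-\omega_g = dd^c u$. Substituting into the first variation formula yields
$$\restr{d\mathcal{M}}{g}\cdot h \;\propto\; \int_{\mathbb{C}P^n}\Theta\wedge dd^c u.$$
Integrating by parts twice—moving $dd^c$ from $u$ onto $\Theta$ via $dd^c = -d^c d$ together with $\int_{\mathbb{C}P^n} d(\cdot) = \int_{\mathbb{C}P^n}d^c(\cdot) = 0$ on the closed manifold—converts this into a multiple of $\int_{\mathbb{C}P^n} u\,(dd^c\Theta)$, which vanishes because $\Theta$ is $dd^c$-closed. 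Hence $\restr{d\mathcal{M}}{g}\cdot h = 0$ for every $h \in T_g\mathscr{G}^{2,\nu}(\mathbb{C}P^n)$, so $g$ is critical.

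I expect the main obstacle to be the bookkeeping around the cohomological normalization: it is exactly $[\omega_g]=[\omega_{FS}]$ that upgrades $\omega_{FS} - c\,\omega_g$ from merely $d$-exact to $dd^c$-exact, and only $dd^c$-exactness (rather than ordinary exactness) lets the $dd^c$-closedness of $\Theta$ annihilate the variation through the double integration by parts—indeed $\Theta$ is not itself $d$-closed, so a single integration by parts would not suffice. Secondary care points are the sign conventions in the two integrations by parts and the standard fact that $\int_{\mathbb{C}P^n} d^c(\cdot) = 0$, which I would verify using the action of $J$ on top-degree forms.
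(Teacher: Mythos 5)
Your proposal is correct and takes essentially the same approach as the paper: the paper likewise normalizes $\int_{\mathbb{C}P^n}\omega_g^n=1$, obtains the identical first variation (your Corollary~\ref{FVFforM} with $c=1$), characterizes $T_g\mathscr{G}^{2,\nu}(\mathbb{C}P^n)$ by the condition $dd^c(\eta\wedge\omega_g^{n-2})=0$, and concludes via the $dd^c$-Lemma and Stokes' Theorem. The only difference is that you spell out the double integration by parts against $dd^c u$, a step the paper leaves implicit.
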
    
 	\begin{proof}
 		Take $g \in \mathscr{K}^{2,\nu}(\mathbb{C}P^n)$, without loss of generality we can assume that $\int_{\mathbb{C}P^n} \omega_g^n =1$. In particular, $\omega_g^{k} \wedge \omega_{FS}$ and $\omega_{FS}^{k+1}$ are $dd^c$-cohomologous, for every $k \in \mathbb{Z}_{\geq0}$. Hence, by formula \eqref{definitionofM} the first variation of $\mathcal{M}$ in the direction $h \in T_g \mathscr{G}^{2,\nu}(\mathbb{C}P^n)$ is given by: 
 		$$\restr{d \mathcal{M}}{g} \cdot h=\frac{(n!)^{\frac{n-1}{n}}}{(n-2)!} \left\{ \int_{\mathbb{C}P^n}\eta \wedge \omega^{n-2}_{g}\wedge{\omega_{FS}} - \int_{\mathbb{C}P^n}\eta \wedge \omega^{n-1}_{g} \right\} $$  
 		
 		 Moreover, arguing as Lemma $5.6$ of \cite{junior2023balanced}, we verify that under the identification~\eqref{formxmetrics} the tangent space $T_g \mathscr{G}^{2,\nu}(\mathbb{C}P^n)$ is given by $\{\eta \in C^{2,\nu}(\Lambda^{1,1}_{\mathbb{R}}) : dd^c(\eta\wedge{\omega_g}^{n-2} )=0\}$. Then, by Stoke' Theorem and integration by parts the result follows.  
 	\end{proof}
	
	\subsection*{Second Variational Formulae}
	For subsequent applications of the Taylor expansion, it will be necessary to compute the second derivative of $\mathcal{M}$ restricted to $\mathscr{G}^{2,\nu}(\mathbb{C}P^n)$ away from its critical points. This requires carrying out the computations within the global chart for $\mathscr{G}^{2,\nu}(\mathbb{C}P^n)$ established in Proposition \ref{chartsofKBG}.
	
	Let $\mathcal{K}^{2,\nu}(\mathbb{C}P^n)$, $\mathcal{B}^{2,\nu}(\mathbb{C}P^n)$, and $\mathcal{G}^{2,\nu}(\mathbb{C}P^n)$ denote the sets $\mathscr{K}^{2,\nu}(\mathbb{C}P^n)$, $\mathscr{B}^{2,\nu}(\mathbb{C}P^n)$, and $\mathscr{G}^{2,\nu}(\mathbb{C}P^n)$ respectively, under the identification \(
	\mathscr{H}^{2,\nu}(\mathbb{C}P^n) \ni g \mapsto \omega_g \in C^{2,\nu}(\Lambda^{1,1}_+)
	\). Furthermore, $\mathcal{K}_1^{2,\nu}(\mathbb{C}P^n)$, $\mathcal{B}_1^{2,\nu}(\mathbb{C}P^n)$, and $\mathcal{G}_1^{2,\nu}(\mathbb{C}P^n)$ will denote the respectively sets where an element $\omega$ is \emph{normalized}\footnote{Since the normalized holomorphic systole is invariant under homothety there is no lost of generality in work with normalized forms.} by $\int_{\mathbb{C}P^n}\omega^n =1$.

	The global trivialization from Proposition \ref{chartsofKBG} is then given by
	\begin{align}\label{trivialization}
		\begin{split}
			\Phi: \mathcal{G}^{2,\nu}(\mathbb{C}P^n) &\to C^{2,\nu}_{dd^c}(\Lambda^{n-1,n-1}_+) \\
		\omega &\mapsto \omega^{n-1}.
		\end{split}  
	\end{align} 
	Denoting its inverse by $\Psi: C^{2,\nu}_{dd^c}(\Lambda^{n-1,n-1}_+) \to \mathcal{G}^{2,\nu}(\mathbb{C}P^n)$, we define the functional
	\begin{align*}
		\mathcal{F}:  C^{2,\nu}_{dd^c}(\Lambda^{n-1,n-1}_+) &\to \mathbb{R} \\
		\sigma &\mapsto \mathcal{F}(\sigma) \doteq \frac{(n!)^{\frac{n-1}{n}}}{(n-1)!}\frac{\int_{\mathbb{C}P^n}{\sigma\wedge \omega_{FS}}}{ \left(\int_{\mathbb{C}P^n} \sigma \wedge \Psi(\sigma) \right)^{\frac{n-1}{n}}},   
	\end{align*}  
	From equation \eqref{definitionofM} we obtain $\mathcal{M}(g_{\omega}) = \mathcal{F}(\Phi(\omega))$. Now, following step-by-step the proof of Theorem 5.12 in \cite{junior2023balanced}, we derive the subsequently second variational formula for $\mathcal{M}$ restricted to $\mathscr{G}^{2,\nu}(\mathbb{C}P^n)$. 
		
	\begin{teo}[Second variational formula of $\mathcal{F}$]\label{svfforf}
		If $\omega \in \mathcal{G}^{2,\nu}_1(\mathbb{C}P^n)$, $\eta \in T_{\omega} \mathcal{G}^{2,\nu}(\mathbb{C}P^n)$ and $\mu= \restr{d\Phi}{\omega} \cdot \eta \in C_{cl}^{1,\nu}(\Lambda_{\mathbb{R}}^{n-1,n-1})$, then
		\begin{align*}
			\restr{d^2 \mathcal{F}}{\Phi(\omega)}(\mu,\mu) = &\frac{(n!)^{\frac{n-1}{n}}}{(n-1)!} \bigg\{ \left( \int_{\mathbb{C}P^{n}} \omega^{n-1} \wedge \omega_{FS} \right) \left( \frac{1}{n-1} \left( \int_{\mathbb{C}P^{n}} \mu \wedge \omega \right)^2 - \int_{\mathbb{C}P^{n}} \mu \wedge \eta \right) \\
				& +2\left( \int_{\mathbb{C}P^{n}} \mu \wedge \omega \right) \left( \left( \int_{\mathbb{C}P^{n}} \omega^{n-1} \wedge \omega_{FS} \right) \left( \int_{\mathbb{C}P^{n}} \mu \wedge \omega \right) - \int_{\mathbb{C}P^{n}} \mu \wedge \omega_{FS} \right) \bigg\}.
		\end{align*}
	\end{teo}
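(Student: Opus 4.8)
The plan is to reduce everything to the quotient rule and then concentrate all the real work on the second variation of the denominator, which is implicit through the inverse chart $\Psi$. I write $\mathcal{F}(\sigma)=c\,A(\sigma)\,B(\sigma)^{-\beta}$ with $c\doteq (n!)^{(n-1)/n}/(n-1)!$, $\beta\doteq (n-1)/n$, $A(\sigma)\doteq\int_{\mathbb{C}P^n}\sigma\wedge\omega_{FS}$ and $B(\sigma)\doteq\int_{\mathbb{C}P^n}\sigma\wedge\Psi(\sigma)$. Since $A$ is linear, $d^2A=0$, so differentiating the quotient twice along the affine path $\sigma_t\doteq\Phi(\omega)+t\mu$ and invoking the normalization $\int_{\mathbb{C}P^n}\omega^n=1$, which gives $B(\Phi(\omega))=1$, the second variation collapses to
\begin{equation*}
\restr{d^2\mathcal{F}}{\Phi(\omega)}(\mu,\mu)=c\left\{-2\beta\left(\int_{\mathbb{C}P^n}\mu\wedge\omega_{FS}\right)B'+\beta(\beta+1)\,A\,(B')^2-\beta\,A\,B''\right\},
\end{equation*}
where $B'$ and $B''$ are the first two $t$-derivatives of $B(\sigma_t)$ at $t=0$ and $A=\int_{\mathbb{C}P^n}\omega^{n-1}\wedge\omega_{FS}$. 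Everything now reduces to computing $B'$ and $B''$.

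To do this I set $\omega_t\doteq\Psi(\sigma_t)$, so the defining relation of $\Psi$ becomes the affine identity $\omega_t^{\,n-1}=\Phi(\omega)+t\mu$, and $B(\sigma_t)=\int_{\mathbb{C}P^n}\omega_t^{\,n-1}\wedge\omega_t=\int_{\mathbb{C}P^n}\omega_t^{\,n}$. Since $\mu=\restr{d\Phi}{\omega}\cdot\eta=(n-1)\,\omega^{n-2}\wedge\eta$, one has $\dot\omega_0=\eta$, and the first derivative is immediate: $B'=n\int_{\mathbb{C}P^n}\omega^{n-1}\wedge\eta=\tfrac{n}{n-1}\int_{\mathbb{C}P^n}\mu\wedge\omega$. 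The hard part will be $B''$, because it requires the second variation $\ddot\omega_0$ of the implicitly defined map $\Psi$, which has no closed form a priori. I would extract it purely algebraically from the affine constraint: differentiating $\omega_t^{\,n-1}=\Phi(\omega)+t\mu$ twice yields $(n-1)(n-2)\,\omega^{n-3}\wedge\eta\wedge\eta+(n-1)\,\omega^{n-2}\wedge\ddot\omega_0=0$, hence $\omega^{n-2}\wedge\ddot\omega_0=-(n-2)\,\omega^{n-3}\wedge\eta\wedge\eta$ (for $n=2$ the coefficient $n-2$ vanishes and the relation degenerates consistently).

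Combining $B''=n(n-1)\int_{\mathbb{C}P^n}\omega^{n-2}\wedge\eta\wedge\eta+n\int_{\mathbb{C}P^n}\omega^{n-1}\wedge\ddot\omega_0$ with the relation above—wedging it by $\omega$ and integrating—the two terms reduce to $B''=n\int_{\mathbb{C}P^n}\omega^{n-2}\wedge\eta\wedge\eta=\tfrac{n}{n-1}\int_{\mathbb{C}P^n}\mu\wedge\eta$, where I use $\mu\wedge\eta=(n-1)\,\omega^{n-2}\wedge\eta\wedge\eta$. Substituting $B'=\tfrac{n}{n-1}\int_{\mathbb{C}P^n}\mu\wedge\omega$ and $B''=\tfrac{n}{n-1}\int_{\mathbb{C}P^n}\mu\wedge\eta$ into the displayed quotient expression, the factors $n$ and $n-1$ cancel pairwise and a short regrouping of the terms involving $\int\mu\wedge\omega_{FS}$, $\int\mu\wedge\omega$, $\int\mu\wedge\eta$ and $A$ reproduces exactly the two-line formula in the statement. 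The remaining points—that the affine path stays inside the open cone $C^{2,\nu}_{dd^c}(\Lambda^{n-1,n-1}_+)$ for small $t$ and that $\Psi$ is twice Fréchet differentiable there—are routine consequences of Proposition~\ref{chartsofKBG}, since $dd^c$-closedness is preserved along $\sigma_t$ and $\Phi$ is a diffeomorphism of Banach manifolds.
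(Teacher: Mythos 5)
Your proposal is correct: the quotient-rule reduction, the identification $\dot\omega_0=\eta$, the implicit second differentiation of the constraint $\omega_t^{n-1}=\Phi(\omega)+t\mu$ giving $\omega^{n-2}\wedge\ddot\omega_0=-(n-2)\,\omega^{n-3}\wedge\eta\wedge\eta$, and the final substitution with $B'=\tfrac{n}{n-1}\int\mu\wedge\omega$, $B''=\tfrac{n}{n-1}\int\mu\wedge\eta$ all check out and reproduce the stated formula exactly (including the coefficient $\tfrac{2n-1}{n-1}=\tfrac{1}{n-1}+2$ after regrouping). This is essentially the same approach the paper takes—it simply outsources this chart computation to Theorem 5.12 of \cite{junior2023balanced}, of which your argument is a self-contained version.
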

 	
 	As a corollary we obtain a second variational formula over a Kähler metric. Since, the Kähler metrics are critical points for the functional $\mathcal{M}$ we do not require the trivialization to express the second variation.   
 	
 	\begin{prop}\label{SVFoverKahler}
 		Let $\omega \in \mathcal{K}^{2,\nu}_1(\mathbb{C}P^n)$, $h \in T_{g_{\omega}} \mathscr{G}^{2,\nu}(\mathbb{C}P^n)$ and $\eta=h(J\cdot, \cdot)$.  Then 
 		$$\restr{d^2 \mathcal{M}}{g_{\omega}}(h, h) = \frac{(n!)^{\frac{n-1}{n}}}{(n-2)!} \left\{\left(\int_{\mathbb{C}P^n}{\eta\wedge \omega^{n-1}}\right)^2 - \int_{\mathbb{C}P^n}{\eta \wedge \eta \wedge \omega^{n-2}}\right\}.  $$     
 	\end{prop}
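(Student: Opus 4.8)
The plan is to read off the result from the general second variational formula of Theorem~\ref{svfforf}, using crucially that Kähler metrics are critical points of $\restr{\mathcal{M}}{\mathscr{G}^{2,\nu}(\mathbb{C}P^n)}$ as recorded in Proposition~\ref{FVFforMrestrG}. Since $\mathcal{M}(g_\omega)=\mathcal{F}(\Phi(\omega))$, I would differentiate twice along a curve $t\mapsto\omega_t$ in $\mathcal{G}^{2,\nu}(\mathbb{C}P^n)$ with $\dot\omega_0=\eta$, setting $\sigma_t=\Phi(\omega_t)$. The chain rule produces the Hessian term $\restr{d^2\mathcal{F}}{\Phi(\omega)}(\dot\sigma_0,\dot\sigma_0)$ plus a correction $\restr{d\mathcal{F}}{\Phi(\omega)}\cdot\ddot\sigma_0$. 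Because $\Phi$ is a diffeomorphism onto $C^{2,\nu}_{dd^c}(\Lambda^{n-1,n-1}_+)$ (Proposition~\ref{chartsofKBG}), its differential $\restr{d\Phi}{\omega}$ is a Banach-space isomorphism onto the full tangent space $C^{2,\nu}_{dd^c}(\Lambda^{n-1,n-1})$; hence the vanishing of $\restr{d\mathcal{M}}{g_\omega}$ on $T_{g_\omega}\mathscr{G}^{2,\nu}(\mathbb{C}P^n)$ forces $\restr{d\mathcal{F}}{\Phi(\omega)}=0$ as a functional. The correction term therefore drops out, leaving $\restr{d^2\mathcal{M}}{g_\omega}(h,h)=\restr{d^2\mathcal{F}}{\Phi(\omega)}(\mu,\mu)$ with $\mu=\restr{d\Phi}{\omega}\cdot\eta=(n-1)\,\omega^{n-2}\wedge\eta$, the latter being just the derivative of the power map $\omega\mapsto\omega^{n-1}$.

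It then remains to substitute $\mu=(n-1)\,\omega^{n-2}\wedge\eta$ into Theorem~\ref{svfforf} and simplify. Using that $(1,1)$-forms have even degree and commute, the integrals reduce to $\int_{\mathbb{C}P^n}\mu\wedge\omega=(n-1)\int_{\mathbb{C}P^n}\eta\wedge\omega^{n-1}$, $\int_{\mathbb{C}P^n}\mu\wedge\eta=(n-1)\int_{\mathbb{C}P^n}\eta\wedge\eta\wedge\omega^{n-2}$, and $\int_{\mathbb{C}P^n}\mu\wedge\omega_{FS}=(n-1)\int_{\mathbb{C}P^n}\eta\wedge\omega^{n-2}\wedge\omega_{FS}$. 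Two consequences of the Kähler hypothesis close the computation. First, since $\omega$ is Kähler with $\int_{\mathbb{C}P^n}\omega^n=1$ its class equals $[\omega_{FS}]$, so the cohomological identity $\int_{\mathbb{C}P^n}\omega^{n-1}\wedge\omega_{FS}=1$ holds. Second, the critical point identity extracted from the first variation formula in Proposition~\ref{FVFforMrestrG} gives $\int_{\mathbb{C}P^n}\eta\wedge\omega^{n-2}\wedge\omega_{FS}=\int_{\mathbb{C}P^n}\eta\wedge\omega^{n-1}$, whence $\int_{\mathbb{C}P^n}\mu\wedge\omega_{FS}=\int_{\mathbb{C}P^n}\mu\wedge\omega$.

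With these substitutions the second summand of Theorem~\ref{svfforf}, namely $2\big(\int\mu\wedge\omega\big)\big((\int\omega^{n-1}\wedge\omega_{FS})(\int\mu\wedge\omega)-\int\mu\wedge\omega_{FS}\big)$, vanishes identically, while the first summand collapses to $\tfrac{1}{n-1}\big((n-1)\int\eta\wedge\omega^{n-1}\big)^2-(n-1)\int\eta\wedge\eta\wedge\omega^{n-2}=(n-1)\big\{(\int\eta\wedge\omega^{n-1})^2-\int\eta\wedge\eta\wedge\omega^{n-2}\big\}$. Absorbing $\tfrac{n-1}{(n-1)!}=\tfrac{1}{(n-2)!}$ into the prefactor then yields the stated expression.

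The computation itself is routine bookkeeping; the one point demanding genuine care is the passage in the first paragraph from $\restr{d^2\mathcal{F}}{\Phi(\omega)}(\dot\sigma_0,\dot\sigma_0)$ to $\restr{d^2\mathcal{M}}{g_\omega}(h,h)$, i.e.\ the chart-independence of the Hessian at the critical point. This rests on $\restr{d\mathcal{F}}{\Phi(\omega)}$ annihilating the entire second-derivative vector $\ddot\sigma_0\in C^{2,\nu}_{dd^c}(\Lambda^{n-1,n-1})$, which is precisely what the surjectivity of $\restr{d\Phi}{\omega}$ onto that space guarantees. I expect this to be the main conceptual step, with everything afterward being a direct consequence of the two Kähler identities above.
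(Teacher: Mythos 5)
Your proposal is correct and follows essentially the same route as the paper: compute $\mu=\restr{d\Phi}{\omega}\cdot\eta=(n-1)\,\eta\wedge\omega^{n-2}$, substitute into Theorem~\ref{svfforf} using the normalization $\int_{\mathbb{C}P^n}\omega^{n-1}\wedge\omega_{FS}=1$, and kill the cross term via the Kähler criticality/Stokes identity of Proposition~\ref{FVFforMrestrG}. Your explicit justification of chart-independence of the Hessian at a critical point is exactly the point the paper disposes of in the remark preceding the proposition (``since Kähler metrics are critical points\dots we do not require the trivialization'').
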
 
 	\begin{proof}
 		First set $\mu = \restr{d\Phi}{\omega} \cdot \eta$, then by definition \eqref{trivialization} we obtain $\mu=(n-1)\eta \wedge \omega ^{n-2}$. Hence, by Theorem \ref{svfforf} the following holds:     
 		\begin{align*}
 			\restr{d^2 \mathcal{M}}{g_\omega}(h,h) = &\frac{(n!)^{\frac{n-1}{n}}}{(n-1)!} \bigg\{ \left(\int_{\mathbb{C}P^n}{\eta\wedge \omega^{n-1}}\right)^2 - \int_{\mathbb{C}P^n}{\eta \wedge \eta \wedge \omega^{n-2}} \\
 			& +2\left( \int_{\mathbb{C}P^{n}} \mu \wedge \omega \right) \left(\int_{\mathbb{C}P^n}{\mu \wedge (\omega-\omega_{FS})} \right) \bigg\}.
 		\end{align*}
 	Where we used that $\int_{\mathbb{C}P^n} \omega^{n-1} \wedge \omega_{FS}=1$ by the normalization of the Kähler form. The result follows as in Proposition \ref{FVFforMrestrG}.  
 	\end{proof}
 
 	\section{Isosystolic Inequalities}\label{section:IsosystolicInequalities}
 	
 	Having derived the first and second variational formulas, we now begin the proof of Theorem~\ref{maintheoremA} by establishing an infinitesimal analogue of the statement. Specifically, we demonstrate that the Hessian of the mean functional over a Kähler metrics constitutes a semi-positive definite bilinear form, whose kernel coincides precisely with the directions generated by Kähler deformations.
 	
 	To this end, fix a smooth normalized Kähler metric $g_{\omega} \in \mathscr{K}(\mathbb{C}P^n)$ and define the bilinear form $\mathcal{Q}_{\omega}: T_{\omega} \mathcal{G}^{2,\nu}(\mathbb{C}P^n) \times T_{\omega} \mathcal{G}^{2,\nu}(\mathbb{C}P^n) \to \mathbb{R}$ as follows:
 	\begin{equation}\label{definitionofQ}
 		\mathcal{Q}_{\omega}(\eta,\xi) \doteq \left(\int_{\mathbb{C}P^n}\eta \wedge \omega^{n-1}\right)\left(\int_{\mathbb{C}P^n}\xi \wedge \omega^{n-1}\right) - \int_{\mathbb{C}P^n}\eta \wedge \xi \wedge \omega^{n-2}.
 	\end{equation}
 	By Proposition~\ref{SVFoverKahler}, the bilinear form $\mathcal{Q}_\omega$ coincides with the Hessian of $\restr{\mathcal{M}}{\mathscr{G}}$ (up to multiplicative constants) under the identification~\eqref{formxmetrics}.
 	 
 	The main tool we will need for study the quadratic form $\mathcal{Q}_\omega$ is the following decomposition of tangent space of $ \mathcal{G}^{2,\nu}(\mathbb{C}P^n)$ at ${\omega}$.
 	\begin{teo}\label{T_omegaG}
 		Let $g_\omega \in \mathscr{K}(\mathbb{C}P^n)$ be a smooth normalized Kähler metric, then it holds 
 		$$T_{\omega}\mathcal{G}^{2,\nu}(\mathbb{C}P^n) = T_{\omega}\mathcal{B}^{2,\nu}(\mathbb{C}P^n) \oplus \mathcal{D}_{\omega},$$
 		where $\mathcal{D}_{\omega} \doteq \left\{ dJ\beta + d^c\beta : \beta \in C^{3,\nu}(\Lambda^1_{\mathbb{R}}) \mbox{ and $\delta_g \beta = \delta_g^c \beta=0$}\right\}$.  
 	\end{teo}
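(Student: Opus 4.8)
The plan is to transport both tangent spaces to a single first-order condition on a $(1,0)$-form and then exploit the invertibility of the complex Laplacian on $(1,0)$-forms of $\mathbb{C}P^n$.

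\emph{Reduction.} Since $g_\omega$ is Kähler, $\omega$ is both $d$- and $d^c$-closed, so the Lefschetz operator $L_\omega=\omega\wedge(\cdot)$ commutes with $d$, $d^c$, $\partial$, $\bar\partial$ and with $J$. Arguing as in the proof of Proposition~\ref{FVFforMrestrG} (cf.\ Lemma~5.6 of~\cite{junior2023balanced}), I identify $T_\omega\mathcal{B}^{2,\nu}(\mathbb{C}P^n)=\{\eta\in C^{2,\nu}(\Lambda^{1,1}_\mathbb{R}):d(\eta\wedge\omega^{n-2})=0\}$ and $T_\omega\mathcal{G}^{2,\nu}(\mathbb{C}P^n)=\{\eta:dd^c(\eta\wedge\omega^{n-2})=0\}$. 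Writing $\mu=\eta\wedge\omega^{n-2}=L_\omega^{n-2}\eta$, using $dd^c=2i\partial\bar\partial$, and noting that $\mu$ is real (so $\bar\partial\mu=\overline{\partial\mu}$), the balanced condition reads $\partial\mu=0$ and the Gauduchon condition reads $\partial\bar\partial\mu=0$. Introduce $c(\eta):=\Lambda_\omega\partial\eta\in C^{1,\nu}(\Lambda^{1,0})$. As $L_\omega^{n-2}$ annihilates exactly the primitive $(2,1)$-forms, $\partial\mu=L_\omega^{n-2}\partial\eta=0$ iff $\partial\eta$ is primitive, i.e.\ $c(\eta)=0$; hence $T_\omega\mathcal{B}^{2,\nu}(\mathbb{C}P^n)=\ker c$. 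Likewise $\partial\bar\partial\mu=L_\omega^{n-2}\partial\bar\partial\eta$, so the Gauduchon condition is $\Lambda_\omega^{2}\partial\bar\partial\eta=0$.

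\emph{The operator $c$ on $\mathcal D_\omega$.} For $\eta_D=dJ\beta+d^c\beta$ with $\delta_g\beta=\delta^c_g\beta=0$, set $b=\beta^{1,0}$; the two coclosedness conditions are equivalent to $\partial^*b=0$. A direct computation gives $\eta_D=2i(\bar\partial b-\partial\bar b)$, and from $\eta_D\wedge\omega^{n-2}=d(J\beta\wedge\omega^{n-2})+d^c(\beta\wedge\omega^{n-2})$ together with $dd^cd=dd^cd^c=0$ one gets $\mathcal D_\omega\subset T_\omega\mathcal{G}^{2,\nu}(\mathbb{C}P^n)$. Using the Kähler identities $[\Lambda_\omega,\partial]=i\bar\partial^{*}$, $[\Lambda_\omega,\bar\partial]=-i\partial^{*}$ and $\Lambda_\omega b=0$, I find
\[
c(\eta_D)=\Lambda_\omega\partial\eta_D=2i\,\Lambda_\omega\partial\bar\partial b=-2\,\Box_\partial b,\qquad \Box_\partial:=\partial\partial^{*}+\partial^{*}\partial .
\]
Since $\ker c=T_\omega\mathcal{B}^{2,\nu}(\mathbb{C}P^n)$, the asserted splitting is equivalent to $c$ restricting to a linear isomorphism from $\mathcal D_\omega$ onto $c\big(T_\omega\mathcal{G}^{2,\nu}(\mathbb{C}P^n)\big)$.

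\emph{Isomorphism.} Because $\mathbb{C}P^n$ has $H^{1,0}=0$, the elliptic operator $\Box_\partial$ is invertible on $(1,0)$-forms. Injectivity is then immediate: $c(\eta_D)=-2\Box_\partial b=0$ forces $b=0$, whence $\eta_D=0$, so $\mathcal D_\omega\cap T_\omega\mathcal{B}^{2,\nu}(\mathbb{C}P^n)=\{0\}$. For surjectivity, fix $\eta'\in T_\omega\mathcal{G}^{2,\nu}(\mathbb{C}P^n)$, put $c'=c(\eta')$, and solve $\Box_\partial b=-\tfrac12 c'$ (uniquely, with $b\in C^{3,\nu}$ by elliptic regularity); then $\eta_D:=2i(\bar\partial b-\partial\bar b)$ satisfies $c(\eta_D)=c'$, so $\eta'-\eta_D\in\ker c=T_\omega\mathcal{B}^{2,\nu}(\mathbb{C}P^n)$. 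It only remains to check the gauge $\partial^*b=0$ that places $\eta_D$ in $\mathcal D_\omega$; as $\Box_\partial$ commutes with $\partial^*$, this reduces to $\partial^*c'=0$. Here the Gauduchon hypothesis enters through the identity
\[
\partial^{*}(\Lambda_\omega\partial\eta')=-\tfrac{i}{2}\,\Lambda_\omega^{2}\partial\bar\partial\eta',
\]
derived from the same Kähler identities; its right-hand side vanishes exactly because $\eta'\in T_\omega\mathcal{G}^{2,\nu}(\mathbb{C}P^n)$ gives $\Lambda_\omega^{2}\partial\bar\partial\eta'=0$. Thus $\partial^*b=0$, $\beta=b+\bar b$ obeys $\delta_g\beta=\delta^c_g\beta=0$, and $\eta_D\in\mathcal D_\omega$, which finishes the splitting.

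\emph{Main obstacle.} I expect the delicate point to be the surjectivity step — namely, ensuring that the unique solution $b$ of $\Box_\partial b=-\tfrac12 c'$ lands in the gauge slice $\{\partial^*b=0\}$ cutting out $\mathcal D_\omega$. Everything rests on the identity $\partial^{*}(\Lambda_\omega\partial\eta')=-\tfrac{i}{2}\Lambda_\omega^{2}\partial\bar\partial\eta'$, which is precisely what converts the Gauduchon condition on $\eta'$ into the vanishing of the gauge defect of $c'$; establishing it (together with the auxiliary facts that $L_\omega^{n-2}$ sees only the non-primitive part on $(2,1)$- and $(2,2)$-forms) is the careful Kähler-identity bookkeeping on which the argument depends.
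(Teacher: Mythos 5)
Your proof is correct, but it takes a genuinely different route from the paper's. The paper works at the level of $(n-1,n-1)$-forms: it Hodge-decomposes the twisted tensor $\eta\wedge\omega^{n-2} - \bigl(\int_{\mathbb{C}P^n}\eta\wedge\omega^{n-1}\bigr)\omega^{n-1}$, uses the surjectivity of $L_g^{n-1}$ to write the co-exact part as $\delta_g(\theta\wedge\omega^{n-1})$, applies the K\"ahler identities and the $dd^c$-condition to force $\delta_g\theta=0$, inverts $L_g^{n-2}$ on $2$-forms to extract the balanced summand, and finally gauge-fixes $\beta\mapsto\beta+d^c\phi$ (solving $\Delta_g\phi=-\delta_g^c\beta$) to achieve $\delta_g^c$-closedness; the trivial intersection is then a separate argument via $\Lambda_g(dd^c\beta)=\Delta_g\beta$ and $H^1_{dR}(\mathbb{C}P^n)=0$. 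You instead collapse everything onto the single first-order operator $c(\eta)=\Lambda_\omega\partial\eta$, identifying $T_\omega\mathcal{B}^{2,\nu}=\ker c$ and the Gauduchon condition with $\Lambda_\omega^2\partial\bar\partial\eta=0$, and then exhibit $c|_{\mathcal{D}_\omega}$ as $-2\Box_\partial$ on gauge-fixed $(1,0)$-forms, so that both injectivity and surjectivity of the splitting come from invertibility of $\Box_\partial$ (i.e.\ $h^{1,0}(\mathbb{C}P^n)=0$); the pivot identity $\partial^{*}(\Lambda_\omega\partial\eta')=-\tfrac{i}{2}\Lambda_\omega^{2}\partial\bar\partial\eta'$, which I checked follows from $[\Lambda_\omega,\partial^*]=0$, $[\Lambda_\omega,\bar\partial]=-i\partial^*$ and $\Lambda_\omega^2|_{\Lambda^{2,1}}=0$, is exactly what converts the Gauduchon hypothesis into the compatibility $\partial^*c'=0$. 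Your approach buys a cleaner, more conceptual proof in which the complement $\mathcal{D}_\omega$ is produced by solving one linear elliptic equation (with transparent Schauder regularity, $b\in C^{3,\nu}$), whereas the paper's longer manipulation stays inside the Hodge--Lefschetz framework used throughout and yields the decomposition in the form that Corollary~\ref{decompTGinKandB} reuses to split $T_\omega\mathcal{B}^{2,\nu}$ further. Two small points to tighten: the kernel of $\Box_\partial$ on functions consists of the constants, not just $0$, so concluding $\partial^*b=0$ from $\Box_\partial(\partial^*b)=0$ requires the extra (one-line) observation that $\partial^*b$ has zero mean; and the constants in $\eta_D=2i(\bar\partial b-\partial\bar b)$ and $c(\eta_D)=-2\Box_\partial b$ depend on the sign convention for $d^c$ and the $J$-action on $1$-forms, though this affects nothing structural.
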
   
 	\begin{proof}
 		First, fix $\eta \in T_{\omega}\mathcal{G}^{2,\nu}(\mathbb{C}P^n) = \{\eta \in C^{2,\nu}(\Lambda^{1,1}_{\mathbb{R}}) : dd^c(\eta \wedge \omega^{n-2}) = 0\}$. Let $\hat{\eta} \in C_{dd^c}^{2,\nu}(\Lambda^{n-1,n-1}_{\mathbb{R}})$ be defined by 
 		$$ \hat{\eta} \doteq \eta\wedge \omega^{n-2} - \left(\int_{\mathbb{C}P^n}{\eta \wedge \omega ^{n-1}}\right)\omega ^{n-1}.$$
 		
 		The Hodge decomposition guarantee the existence of $\zeta \in C^{3,\nu}(\Lambda^{2n-3})$ and $\xi \in C^{3,\nu}(\Lambda^{2n-1})$, such that $\hat{\eta} = d \zeta + \delta_{g} \xi$. On the other hand, by Proposition 1.2.30 of \cite{huybrechts2005complex}, there exists ${\theta} \in C^{3,\nu}(\Lambda^1)$ such that $\xi=\theta \wedge \omega^{n-1}$. Now, the Kähler identities yields to
 		$$ \delta_g \xi = \left(\delta_g L_{g}^{n-2}\right) \theta = L_g^{n-1} \delta_{g} \theta - (n-1)d^c L_{g}^{n-2} \theta.$$  
 		Hence, $\hat{\eta} = f\omega^{n-1} + d \zeta - (n-1)d^c L_{g}^{n-2} \theta$, for $f=\delta_g\theta$.
 		
 		 Applying the $dd^c$ operator to $\hat{\eta}$ we obtain
 		$$0 = dd^c \hat{\eta} = dd^cf \wedge \omega^{n-1} = \left(\Delta_{g}f\right) \omega^{n},$$
 		where in the last equality we used that for every $(1,1)$-form it holds that $\alpha\wedge \omega^{n-1} = \Lambda_{g}(\alpha)\, \omega^{n}$. Therefore, $\delta_{g}\theta = 0$ and $\hat{\eta} = d \zeta - (n-1)d^c L_{g}^{n-2} \theta$. 
 		
 		On the other hand, since $\hat{\eta}$ is $J$-invariant, there exists $\lambda \in C^{4,\nu}(\Lambda^{2n-4})$ satisfying $d\zeta = -(n-1)d J\left(L_{g}^{n-2} \theta\right) + dd^c \lambda$. In conclusion, 
 		$$\eta\wedge \omega^{n-2} = \left(\int_{\mathbb{C}P^n}{\eta \wedge \omega ^{n-1}}\right)\omega ^{n-1} + dd^c \lambda - (n-1) \left(dJ\theta + d^c \theta\right)\wedge \omega^{n-2}.$$
 		
 		Again applying Proposition 1.2.30 of \cite{huybrechts2005complex}, we observe that the multiplication by $\omega^{n-2}$ is bijective when restricted to the $2$-forms. Hence, there exists $\rho$ with 
 		$$\rho \wedge \omega^{n-2} = \left(\int_{\mathbb{C}P^n}{\eta \wedge \omega ^{n-1}}\right)\omega ^{n-1} + dd^c \lambda,$$ 
 		which implies the decomposition $ \eta = \rho + dJ\beta + d^{c}\beta,$ where $\beta \doteq (1-n)\theta$. 
 		
 		Note that, by definition $\rho \in T_{\omega} \mathcal{B}^{2,\nu}(\mathbb{C}P^n)$ and also $\beta$ is $\delta_g$-closed. However, $\beta$ may not be $\delta^{c}_g$-closed.
 		
 		In order to fix this problem we observe that the map $\beta \mapsto dJ\beta + d^c \beta$ is invariant by the $C^{4,\nu}(\mathbb{C}P^n)$-action $\phi \mapsto \beta + d^c\phi$. Moreover, since $d^c$ and $\delta_g$ anti-commute this action preserve the $\delta_g$-closedness. Now, since $\int_{\mathbb{C}P^n}\delta_{g}\beta=0$, there is a unique $\phi \in C^{4,\nu}(\mathbb{C}P^n)$ satisfying the equations: 
 		\begin{equation*}
 			\begin{cases}
 				\Delta_{g} \phi = -\delta_{g}^c\beta \\
 				\int_{\mathbb{C}P^n} \phi^2 \,d \mathrm{Vol}_{g} = 1.
 			\end{cases}
 		\end{equation*}
 		In particular, $\delta_g^c(\beta + d^c\phi) = \delta^c_g\beta + \Delta_g\phi =0$. Hence, changing $\beta$ by $\beta+ d^c \phi$ if necessary, we obtain our desired decomposition.  
 		
 		It remains to prove that $T_{\omega}\mathcal{B}^{2,\nu}(\mathbb{C}P^n) \cap \mathcal{D}_{\omega} = \{0\}$. Let $\eta \in T_{\omega}\mathcal{B}^{2,\nu}(\mathbb{C}P^n) \cap \mathcal{D}_{\omega}$. By definition there exists a $\beta \in C^{3,\nu}(\Lambda^1_{\mathbb{R}})$ with  
 		\[
 		\delta_g \beta = 0, \quad \delta_g^c \beta = 0, \quad \text{and} \quad \eta = dJ\beta + d^c\beta,
 		\] 
 		such that
 		\[
 		0 = d\eta\wedge \omega^{n-2} = dd^c\beta \wedge \omega^{n-2}.
 		\]
 		
 		However, by Proposition 1.2.30 of \cite{huybrechts2005complex} we have that $\ker L_g^{n-2} = \ker \Lambda_g$, when restricted to the $3$-forms, hence by the Kähler identities 
 		\[
 		0 = \Lambda_g (dd^c \beta) = dd^c\left(\Lambda_g \beta\right) + d\delta_g \beta - \delta^c_g d^c \beta.
 		\]
 		
 		Using that $\beta$ is a $1$-form with $\delta_g \beta = 0$ and $\delta_g^c \beta = 0$, along with the fact that the Hodge Laplacian on a Kähler manifold can be written as $\Delta_g = d^c\delta^c_g + \delta^c_g d^c$, we obtain
 		\[
 		0 = \Lambda_g (dd^c \beta) = \Delta_g \beta.
 		\]
 		Since $H^1_{dR}(\mathbb{C}P^n)$ is trivial we have that $\beta=0$. Therefore, $\eta = 0$ as claimed.                                 
 	\end{proof}

	Using the techniques of \cite{junior2023balanced} we can further decompose the tangent space of the space of balanced metrics, in order to obtain the following corollary. 
	
	\begin{coro}\label{decompTGinKandB}
		 Let $g_\omega \in \mathscr{K}(\mathbb{C}P^n)$ be a smooth normalized K\"ahler metric. Then, the following decomposition holds
		 \begin{equation}\label{decompositionofTgG}
		 	T_{\omega}\mathcal{G}^{2,\nu}(\mathbb{C}P^n) = T_{\omega}\mathcal{K}^{2,\nu}(\mathbb{C}P^n) \oplus \mathcal{C}_\omega \oplus \mathcal{D}_\omega,
		 \end{equation}
		 where:
		 \begin{itemize}
		 	\item $\mathcal{C}_\omega = \{\delta_g^c \delta_g \theta \mid \theta \in C^{4,\nu}(\Lambda^4)\} \cap T_{\omega}\mathcal{B}^{2,\nu}(\mathbb{C}P^n)$;
		 	
		 	\item $\mathcal{D}_\omega \doteq \{dJ\beta + d^c\beta \mid \beta \in C^{3,\nu}(\Lambda^1_{\mathbb{R}}),\ \delta_g \beta = \delta_g^c \beta = 0\}$.
		 \end{itemize}
	\end{coro}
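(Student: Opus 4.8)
The plan is to read the three-term splitting off Theorem~\ref{T_omegaG}, which already provides $T_{\omega}\mathcal{G}^{2,\nu}(\mathbb{C}P^n) = T_{\omega}\mathcal{B}^{2,\nu}(\mathbb{C}P^n)\oplus\mathcal{D}_\omega$. Since $\mathcal{C}_\omega\subset T_{\omega}\mathcal{B}^{2,\nu}(\mathbb{C}P^n)$ by definition, and $\mathcal{D}_\omega$ is precisely the complement of $T_{\omega}\mathcal{B}^{2,\nu}$ inside $T_{\omega}\mathcal{G}^{2,\nu}$ furnished by that theorem, everything reduces to establishing the two-term orthogonal decomposition
\[
T_{\omega}\mathcal{B}^{2,\nu}(\mathbb{C}P^n) = T_{\omega}\mathcal{K}^{2,\nu}(\mathbb{C}P^n)\oplus\mathcal{C}_\omega ,
\]
after which the direct sum \eqref{decompositionofTgG} is automatic. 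Throughout I use the descriptions $T_{\omega}\mathcal{K}^{2,\nu} = \{\eta\in C^{2,\nu}(\Lambda^{1,1}_{\mathbb{R}}) : d\eta=0\} = \{\,c\,\omega + dd^c\phi\,\}$ (valid because $H^2_{dR}(\mathbb{C}P^n)=\mathbb{R}[\omega]$ together with the $dd^c$-Lemma at the $(1,1)$-level) and $T_{\omega}\mathcal{B}^{2,\nu} = \{\eta\in C^{2,\nu}(\Lambda^{1,1}_{\mathbb{R}}) : d(\eta\wedge\omega^{n-2})=0\}$, exactly as in the proof of Theorem~\ref{T_omegaG}. Note that a $d$-closed real $(1,1)$-form is automatically $d^c$-closed, a fact I will exploit below.

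I would first dispatch directness. Let $\eta\in T_{\omega}\mathcal{K}^{2,\nu}\cap\mathcal{C}_\omega$, so that $\eta=\delta^c_g\delta_g\theta$ for some $\theta\in C^{4,\nu}(\Lambda^4)$ while simultaneously $\eta$ is a closed real $(1,1)$-form, whence $d^c\eta=0$. Since $\delta^c_g$ is the formal $L^2$-adjoint of $d^c$, integration by parts gives
\[
\|\eta\|_{L^2}^2 = \langle \delta^c_g\delta_g\theta,\,\eta\rangle = \langle \delta_g\theta,\, d^c\eta\rangle = 0,
\]
forcing $\eta=0$. The same computation shows more, namely that $\mathcal{C}_\omega$ is $L^2$-orthogonal to $T_{\omega}\mathcal{K}^{2,\nu}$, so the asserted decomposition is genuinely orthogonal and the sum with $\mathcal{D}_\omega$ remains direct.

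The substantive step is spanning: every $\eta\in T_{\omega}\mathcal{B}^{2,\nu}$ should split as $\eta_K+\eta_C$ with $\eta_K\in T_{\omega}\mathcal{K}^{2,\nu}$ and $\eta_C\in\mathcal{C}_\omega$. Here I would transport the problem to the middle via the Lefschetz isomorphism $L_g^{n-2}\colon \Lambda^{1,1}\xrightarrow{\ \sim\ }\Lambda^{n-1,n-1}$ (Proposition~$1.2.30$ of~\cite{huybrechts2005complex}): the form $\mu\doteq\eta\wedge\omega^{n-2}$ is a closed real $(n-1,n-1)$-form, so the $dd^c$-Lemma yields $\mu = c\,\omega^{n-1}+dd^c\psi$ with $\psi\in C^{4,\nu}(\Lambda^{n-2,n-2}_{\mathbb{R}})$. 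Splitting $\psi$ into its $\omega^{n-2}$-multiple part and the complementary part with $\Lambda_g^{n-2}\psi^{\perp}=0$, the first piece contributes $dd^c(\phi\,\omega^{n-2})=L_g^{n-2}(dd^c\phi)$, i.e.\ the Kähler direction $\eta_K=c\,\omega+dd^c\phi$, while the remainder $dd^c\psi^{\perp}$ must be realized as $L_g^{n-2}(\delta^c_g\delta_g\theta)$ for a $4$-form $\theta$ built from $\psi^{\perp}$ by Hodge duality, using the identity $\delta^c_g\delta_g=\pm\, {*}\,dd^c\,{*}$ and the Kähler identities $[L_g,\delta_g]=-d^c$, $[L_g,\delta^c_g]=d$ to commute $L_g^{n-2}$ through $\delta^c_g\delta_g$. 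Setting $\eta_C=\delta^c_g\delta_g\theta$, one checks $\eta_C\wedge\omega^{n-2}=dd^c\psi^{\perp}$ is closed, so $\eta_C\in\mathcal{C}_\omega$, and then $\eta=\eta_K+\eta_C$ follows from injectivity of $L_g^{n-2}$.

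The main obstacle is precisely this last realization. The Lefschetz operator $L_g^{n-2}$ and the Hodge star $*$ act by different scalars on the trace and primitive parts of a $(1,1)$-form, so the naive choice $\theta={*}\psi^{\perp}$ reproduces $dd^c\psi^{\perp}$ only up to a trace term; one must absorb that discrepancy into the Kähler factor $\eta_K$, which is legitimate but requires the careful constant bookkeeping between $*$, $L_g^{n-2}$, $\Lambda_g$, and the primitive decomposition. This is exactly the computation carried out in~\cite{junior2023balanced}, whose techniques I would invoke verbatim; the only genuinely new input here is Theorem~\ref{T_omegaG}, supplying the $\mathcal{D}_\omega$ summand that upgrades the balanced decomposition to the Gauduchon one.
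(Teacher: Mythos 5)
Your proposal follows the same skeleton as the paper's proof: read the $\mathcal{D}_\omega$ summand off Theorem~\ref{T_omegaG}, reduce everything to the two-term balanced splitting $T_{\omega}\mathcal{B}^{2,\nu}(\mathbb{C}P^n) = T_{\omega}\mathcal{K}^{2,\nu}(\mathbb{C}P^n)\oplus\mathcal{C}_\omega$, and outsource the substance of that splitting to \cite{junior2023balanced}. Where you diverge is in how the description of $\mathcal{C}_\omega$ as $\{\delta_g^c\delta_g\theta\}\cap T_{\omega}\mathcal{B}^{2,\nu}(\mathbb{C}P^n)$ is reached. You try to manufacture $\delta_g^c\delta_g\theta$ directly, pushing $L_g^{n-2}$ through $\delta_g^c\delta_g$ via Hodge duality, and you correctly flag the scalar bookkeeping between $*$, $L_g^{n-2}$ and the primitive decomposition as the main obstacle; you then propose to invoke the computation of \cite{junior2023balanced} ``verbatim.'' The paper's actual route dissolves that obstacle: it quotes Lemma 5.7 of \cite{junior2023balanced}, which gives the balanced splitting with $\mathcal{C}_\omega$ described as \emph{co-exact} forms $\{\delta_g\theta : \theta\in C^{3,\nu}(\Lambda^3)\}\cap T_{\omega}\mathcal{B}^{2,\nu}(\mathbb{C}P^n)$, and then upgrades in one line: any such $\delta_g\theta$ is a real $(1,1)$-form, hence $J$-invariant, hence $\delta_g^c$-closed, so the dual ($\delta\delta^c$) form of the $dd^c$-Lemma rewrites it as $\delta_g^c\delta_g\theta'$ with $\theta'\in C^{4,\nu}(\Lambda^4)$. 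Note this also means your ``verbatim'' deferral is slightly imprecise --- the cited lemma does not literally produce the $\delta_g^c\delta_g$ description, and the short conversion step above (or your harder Lefschetz-transport computation) is still required. On the credit side, your $L^2$-orthogonality argument for directness, $\|\eta\|_{L^2}^2=\langle\delta_g\theta, d^c\eta\rangle=0$, is correct, cleaner than inheriting directness from the citation, and is exactly the kind of orthogonality the paper exploits later (e.g., the $L^2$-orthogonality of $\mathcal{C}_\omega$ and $\mathcal{D}_\omega$ in the proof of Theorem~\ref{maintheorem1inF}).
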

	\begin{proof}
		By the proof of Lemma 5.7 of \cite{junior2023balanced} we observe that $ T_{\omega}\mathcal{B}^{2,\nu}(\mathbb{C}P^n) = T_{\omega}\mathcal{K}^{2,\nu}(\mathbb{C}P^n) \oplus \mathcal{C}_\omega$, with 
		$$\mathcal{C}_\omega = \{ \delta_g \theta \mid \theta \in C^{3,\nu}(\Lambda^3)\} \cap T_{\omega}\mathcal{B}^{2,\nu}(\mathbb{C}P^n).$$
		
		But, every $\delta_g\theta \in \mathcal{C}_\omega$ is $J$-invariant, in particular it is $\delta^c_g$-closed. Hence, the conclusion follows from the $dd^c$-Lemma.
	\end{proof}
	
	For this decomposition to be effective in the analysis of the Hessian, we must first establish its $\mathcal{Q}_\omega$-orthogonality, which we prove in the following proposition.    
	
	\begin{prop}
			Let $g_\omega \in \mathscr{K}(\mathbb{C}P^n)$ be a smooth normalized Kähler metric, then: 
			\begin{enumerate}[label=\alph*),ref=(\alph*)]
				\item\label{KernelofQ} $\mathcal{Q}_\omega\left(T_{\omega}\mathcal{K}^{2,\nu}(\mathbb{C}P^n),T_{\omega}\mathcal{G}^{2,\nu}(\mathbb{C}P^n)\right)=0;$
				\item\label{Q(V,W)=0} $\mathcal{Q}_\omega(\mathcal{C}_\omega, \mathcal{D}_{\omega})=0$. 
			\end{enumerate}
	\end{prop}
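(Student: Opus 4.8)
The plan is to verify both identities by expanding the relevant subspaces along the concrete generators supplied by Corollary~\ref{decompTGinKandB} and reducing everything to two elementary mechanisms: integration by parts against the closed operators $d$ and $d^c$, and the fact that a $d$-closed form of pure bidegree $(p,p)$ is automatically $d^c$-closed (split $d\Theta = \partial\Theta + \bar\partial\Theta = 0$ into its $(p+1,p)$ and $(p,p+1)$ components). I will use throughout that for a real $(1,1)$-form $\eta$ one has $\int_{\mathbb{C}P^n}\eta\wedge\omega^{n-1} = \int_{\mathbb{C}P^n}(\Lambda_g\eta)\,\omega^n$, which is proportional to $\langle\eta,\omega\rangle_{L^2}$.

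For part~\ref{KernelofQ}, I would first record that a Kähler tangent vector $\eta \in T_\omega\mathcal{K}^{2,\nu}(\mathbb{C}P^n)$ is a $d$-closed real $(1,1)$-form; since $H^2_{\mathrm{dR}}(\mathbb{C}P^n) = \mathbb{R}[\omega]$ and the $dd^c$-Lemma holds, one may write $\eta = c\,\omega + dd^c\phi$ for a constant $c$ and a real function $\phi$, and check $\mathcal{Q}_\omega(\cdot,\xi) = 0$ on each summand for arbitrary $\xi \in T_\omega\mathcal{G}^{2,\nu}(\mathbb{C}P^n)$. On the $c\,\omega$ part the normalization $\int_{\mathbb{C}P^n}\omega^n = 1$ gives $\int_{\mathbb{C}P^n}\omega\wedge\omega^{n-1} = 1$ and $\int_{\mathbb{C}P^n}\omega\wedge\xi\wedge\omega^{n-2} = \int_{\mathbb{C}P^n}\xi\wedge\omega^{n-1}$, so the two terms of $\mathcal{Q}_\omega(\omega,\xi)$ cancel. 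On the $dd^c\phi$ part, integrating by parts twice moves $dd^c$ onto the remaining factors: the first factor becomes $\int_{\mathbb{C}P^n}\phi\,dd^c\omega^{n-1} = 0$ because $\omega$ is Kähler, and the wedge term becomes $\int_{\mathbb{C}P^n}\phi\,dd^c(\xi\wedge\omega^{n-2}) = 0$ precisely because $\xi \in T_\omega\mathcal{G}^{2,\nu}(\mathbb{C}P^n)$ means $dd^c(\xi\wedge\omega^{n-2}) = 0$. This settles part~\ref{KernelofQ}.

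For part~\ref{Q(V,W)=0}, the plan is to annihilate the two terms of $\mathcal{Q}_\omega$ separately. First I would show that every element of $\mathcal{C}_\omega$ and of $\mathcal{D}_\omega$ is $L^2$-orthogonal to the harmonic form $\omega$, equivalently $\int_{\mathbb{C}P^n}\eta\wedge\omega^{n-1} = 0$: the generator $\delta_g^c\delta_g\theta$ pairs as $\langle\delta_g\theta, d^c\omega\rangle_{L^2} = 0$, while $dJ\beta + d^c\beta$ pairs as $\langle J\beta, \delta_g\omega\rangle_{L^2} + \langle\beta, \delta_g^c\omega\rangle_{L^2} = 0$, all vanishing because $\omega$ is Kähler (so $d^c\omega = \delta_g\omega = \delta_g^c\omega = 0$). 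Hence the product term of $\mathcal{Q}_\omega(\eta_C,\eta_D)$ is $0\cdot 0 = 0$, and only $\int_{\mathbb{C}P^n}\eta_C\wedge\eta_D\wedge\omega^{n-2}$ survives. Here I would use that $\eta_C \in T_\omega\mathcal{B}^{2,\nu}(\mathbb{C}P^n)$, so $\Theta \doteq \eta_C\wedge\omega^{n-2}$ is $d$-closed; being of bidegree $(n-1,n-1)$ it is then $d^c$-closed as well. Writing $\eta_D = dJ\beta + d^c\beta$ and integrating by parts, $\int_{\mathbb{C}P^n}dJ\beta\wedge\Theta = \pm\int_{\mathbb{C}P^n}J\beta\wedge d\Theta = 0$ and $\int_{\mathbb{C}P^n}d^c\beta\wedge\Theta = \pm\int_{\mathbb{C}P^n}\beta\wedge d^c\Theta = 0$, so that $\int_{\mathbb{C}P^n}\eta_C\wedge\eta_D\wedge\omega^{n-2} = 0$.

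The main obstacle is bookkeeping rather than conceptual: one must justify integration by parts for the twisted operator $d^c$ (using $\int_{\mathbb{C}P^n}d^c(\cdot) = 0$ on the closed manifold together with the Leibniz rule for $d^c$), keep the signs correct through the double integrations by parts, and invoke cleanly the fact that a $d$-closed $(p,p)$-form is $d^c$-closed. I would also check that the various potentials $\phi$, $\theta$, $\beta$ carry enough Hölder regularity for these manipulations, which is guaranteed by the Hodge-theoretic constructions underlying Theorem~\ref{T_omegaG} and Corollary~\ref{decompTGinKandB}.
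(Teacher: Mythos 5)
Your proof is correct and follows essentially the same route as the paper: in part~\ref{KernelofQ} the paper likewise writes the K\"ahler direction as $a\omega + dd^c u$ via the $dd^c$-Lemma and kills both terms by Stokes' Theorem using $dd^c(\xi\wedge\omega^{n-2})=0$, and in part~\ref{Q(V,W)=0} it likewise pairs $\xi = dJ\beta + d^c\beta$ against $\eta\wedge\omega^{n-2}$, which is $d$-closed (tangent to balanced) and hence $d^c$-closed by $J$-invariance, so everything vanishes by integration by parts. The only cosmetic differences are that the paper also decomposes $\xi\wedge\omega^{n-2}$ by the $dd^c$-Lemma in part~\ref{KernelofQ}, and in part~\ref{Q(V,W)=0} it states the argument directly for all of $T_\omega\mathcal{B}^{2,\nu}(\mathbb{C}P^n)\supset\mathcal{C}_\omega$, exactly as your $\Theta$-argument does.
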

	\begin{proof}
		First we prove item \ref{KernelofQ}. Consider $\eta \in T_{\omega}\mathcal{K}^{2,\nu}(\mathbb{C}P^n)$ and $\xi \in T_{\omega}\mathcal{G}^{2,\nu}(\mathbb{C}P^n)$. Applying the $dd^c$-Lemma to both Kähler and Gauduchon forms, we obtain
		 \[
		 \eta = a\omega +dd^cu, \quad \text{and} \quad \xi \wedge \omega^{n-2} = b\omega^{n-1} + dd^c \theta,
		 \]
		for a constant $a \in \mathbb{R}$, $u \in C^{4,\nu}(\mathbb{C}P^n)$ and $\theta \in C^{4,\nu}(\Lambda^{2n-3})$. Hence, by Stokes' Theorem 
		
		\begin{align*}
			\mathcal{Q}_\omega(\eta, \xi)&= \left(\int_{\mathbb{C}P^n}a\omega^n \right)\left(\int_{\mathbb{C}P^n}(\xi\wedge \omega^{n-2}) \wedge \omega \right) - \left( \int_{\mathbb{C}P^n}{(a\omega + dd^cu)\wedge \xi \wedge \omega^{n-2}} \right)\\
			&= ab \left(\int_{\mathbb{C}P^n}\omega^n \right)^2 - \left( \int_{\mathbb{C}P^n}{a \left(\xi \wedge \omega^{n-2}\right) \wedge \omega} + u\wedge dd^c\left(\xi \wedge \omega^{n-2} \right) \right)\\
			&= ab-ab =0.    
		\end{align*}

		Now we follow with the prove of item \ref{Q(V,W)=0}. Fix $\eta \in T_{\omega}\mathcal{B}^{2,\nu}(\mathbb{C}P^n)$ and $\xi = dJ\beta + d^c\beta$, with $\delta_g\beta= \delta^c_g\beta = 0$. Then
		$$\mathcal{Q}_{\omega}(\eta,\xi)=\left(\int_{\mathbb{C}P^n}\eta\wedge \omega^{n-1}\right)\left(\int_{\mathbb{C}P^n}(dJ\beta + d^c\beta)\wedge \omega^{n-1}\right) - \left( \int_{\mathbb{C}P^n}{(dJ\beta + d^c \beta)\wedge \eta \wedge \omega^{n-2}} \right). $$  
		On the other hand, $\eta$ is $J$-invariant hence $d^c(\eta \wedge \omega ^{n-2})=0$, therefore the desired result follows again by Stokes' Theorem.   
	\end{proof}

	The $\mathcal{Q}_{\omega}$-orthogonality of decomposition~\eqref{decompositionofTgG} allow us to separate the analysis of the Hessian of $\mathcal{M}$ on each component. This approach yields both the semi-positive definiteness and the kernel characterization.
	
	\begin{prop}\label{QonVandW}
			Let $g_\omega \in \mathscr{K}(\mathbb{C}P^n)$ be a smooth normalized Kähler metric. Then, 
			\begin{enumerate}[label=\alph*),ref=(\alph*)]
				\item\label{QinBal} $\mathcal{Q}_\omega(\eta, \eta) = (n-2)! \int_{\mathbb{C}P^n}{|| \Pi_{\delta,g}(\eta) ||^2_{g}\, d\mathrm{Vol}_g }$, for every $\eta \in T_{\omega}\mathcal{B}^{2,\nu}(\mathbb{C}P^n)$.
				\item\label{QinVomega} $\mathcal{Q}_{\omega}(\xi,\xi) = (n-2)!\int_{\mathbb{C}P^n} ||\xi||^2_g \, d\mathrm{Vol}_g$, for every $\xi\in \mathcal{C}_{\omega}$.    
			\end{enumerate}
		In particular, the kernel of the quadratic form $\mathcal{Q}_\omega$ is given by $T_{\omega} \mathcal{K}^{2,\nu}(\mathbb{C}P^n)$. 
	\end{prop}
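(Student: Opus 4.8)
The plan is to reduce every value of $\mathcal{Q}_\omega$ to a single pointwise algebraic identity combined with the $g_\omega$-Hodge decomposition. The identity I would establish first is that for any real $(1,1)$-form $\alpha$,
\[
\alpha\wedge\alpha\wedge\omega^{n-2}=(n-2)!\,\bigl[(\Lambda_g\alpha)^2-\|\alpha\|_g^2\bigr]\,d\mathrm{Vol}_g .
\]
This follows from the Lefschetz splitting $\alpha=\tfrac1n(\Lambda_g\alpha)\,\omega+\alpha_0$ into trace and primitive parts, together with the Hodge--Riemann relation $\ast\alpha_0=-\tfrac{1}{(n-2)!}\,\omega^{n-2}\wedge\alpha_0$ for primitive $(1,1)$-forms (see \cite{huybrechts2005complex}); the constant is pinned down by testing on $\alpha=\omega$. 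In particular, if $\alpha$ is primitive then $-\alpha\wedge\alpha\wedge\omega^{n-2}=(n-2)!\,\|\alpha\|_g^2\,d\mathrm{Vol}_g$, which is the positive-definite mechanism behind both items.

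For item~\ref{QinBal} I would write the Hodge decomposition $\eta=a\omega+d\gamma+\delta_g\psi$ of $\eta\in T_\omega\mathcal{B}^{2,\nu}$; since $\Delta_g$ preserves bidegree and $H^{1,1}(\mathbb{C}P^n)=\mathbb{R}\omega$, each summand is a real $(1,1)$-form and $\delta_g\psi=\Pi_{\delta,g}(\eta)$. Using $d\omega=0$ and the $L^2$-orthogonality of $\delta_g\psi$ to the harmonic form $\omega$ gives $\int_{\mathbb{C}P^n}\eta\wedge\omega^{n-1}=a$ after the normalization $\int\omega^n=1$. Expanding $\int_{\mathbb{C}P^n}\eta\wedge\eta\wedge\omega^{n-2}$, Stokes kills every cross term; the one subtle cancellation, $\int_{\mathbb{C}P^n}d\gamma\wedge\delta_g\psi\wedge\omega^{n-2}=0$, uses precisely that the balanced hypothesis yields $d(\delta_g\psi\wedge\omega^{n-2})=d(\eta\wedge\omega^{n-2})=0$. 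Hence $\mathcal{Q}_\omega(\eta,\eta)=-\int_{\mathbb{C}P^n}\delta_g\psi\wedge\delta_g\psi\wedge\omega^{n-2}$, which by the identity above equals $(n-2)!\int\|\Pi_{\delta,g}(\eta)\|_g^2\,d\mathrm{Vol}_g$ as soon as $\beta:=\Pi_{\delta,g}(\eta)$ is shown to be primitive.

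This primitivity is the step I expect to be the main obstacle, as it is where the Hodge-theoretic and balanced hypotheses must be fused. The form $\beta$ is coclosed, its trace has zero average (coexact forms are $L^2$-orthogonal to $\omega$), and it inherits $d(\beta\wedge\omega^{n-2})=0$. Rewriting $\beta\wedge\omega^{n-2}=(n-2)!\,\ast\!\bigl((\Lambda_g\beta)\,\omega-\beta\bigr)$ and using $d\ast=\pm\ast\delta_g$ on $2$-forms turns this into $\delta_g\bigl((\Lambda_g\beta)\,\omega-\beta\bigr)=0$; subtracting $\delta_g\beta=0$ leaves $\delta_g\bigl((\Lambda_g\beta)\,\omega\bigr)=0$, i.e. $d^c(\Lambda_g\beta)=0$, whence $\Lambda_g\beta$ is constant and the zero-average condition forces $\Lambda_g\beta=0$. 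Item~\ref{QinVomega} is then immediate: an element of $\mathcal{C}_\omega$ is coexact, hence equal to its own $\Pi_{\delta,g}$-part, so~\ref{QinBal} applies verbatim.

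Finally, for the kernel claim I would use the $\mathcal{Q}_\omega$-orthogonal splitting $T_\omega\mathcal{G}^{2,\nu}=T_\omega\mathcal{K}^{2,\nu}\oplus\mathcal{C}_\omega\oplus\mathcal{D}_\omega$ of Corollary~\ref{decompTGinKandB}, together with items~\ref{KernelofQ} and~\ref{Q(V,W)=0}, to write $\mathcal{Q}_\omega(\eta,\eta)=\mathcal{Q}_\omega(\eta_{\mathcal{C}},\eta_{\mathcal{C}})+\mathcal{Q}_\omega(\eta_{\mathcal{D}},\eta_{\mathcal{D}})$ for the corresponding components. The $\mathcal{C}_\omega$-term is positive definite by~\ref{QinVomega}; for the $\mathcal{D}_\omega$-term, given $\xi=dJ\beta+d^c\beta$ with $\delta_g\beta=\delta_g^c\beta=0$, I would note $\int_{\mathbb{C}P^n}\xi\wedge\omega^{n-1}=0$ (because $\delta_g\omega=0$ and $\delta_g^c\omega=0$), while the identity $dJ\beta=Jd^c\beta$ gives $\xi=(1+J)d^c\beta$, so that, since $\Lambda_g$ annihilates forms of type $(2,0)$ and $(0,2)$ and the Kähler identity reads $[\Lambda_g,d^c]=\delta_g$, one gets $\Lambda_g\xi=2\,\Lambda_g(d^c\beta)=2\,\delta_g\beta=0$. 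Thus $\xi$ is primitive and $\mathcal{Q}_\omega(\xi,\xi)=(n-2)!\int\|\xi\|_g^2\,d\mathrm{Vol}_g>0$ for $\xi\neq0$. Consequently $\mathcal{Q}_\omega$ is positive definite on $\mathcal{C}_\omega\oplus\mathcal{D}_\omega$, and by~\eqref{definitionofQ} its kernel is exactly $T_\omega\mathcal{K}^{2,\nu}(\mathbb{C}P^n)$.
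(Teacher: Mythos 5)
Your treatment of $\mathcal{D}_\omega$ and of the kernel statement is essentially the paper's own argument for item \ref{QinVomega}: the paper likewise shows $\Lambda_g(dJ\beta+d^c\beta)=0$ via the K\"ahler identities and then invokes the Riemann--Hodge bilinear relations for primitive $(1,1)$-forms. For item \ref{QinBal}, however, the paper simply cites Corollary 5.14 of \cite{junior2023balanced}, whereas you attempt a self-contained proof, and that is where your argument has a genuine gap. You assert that in the Hodge decomposition $\eta=a\omega+d\gamma+\delta_g\psi$ of $\eta\in T_{\omega}\mathcal{B}^{2,\nu}(\mathbb{C}P^n)$ ``each summand is a real $(1,1)$-form'' because ``$\Delta_g$ preserves bidegree and $H^{1,1}(\mathbb{C}P^n)=\mathbb{R}\omega$''. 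This inference is false: $\Delta_g$ (hence the Green operator $G$ and the harmonic projection) preserves bidegree, but the operators $d\delta_g G$ and $\delta_g d G$ do not. Concretely, the $(2,0)$-component of $\Pi_{\delta,g}(\eta)=\delta_g dG\eta$ is $\bar{\partial}^*\partial G\eta=-\partial G\bar{\partial}^*\eta$, which is nonzero for a generic real $(1,1)$-form: taking $\eta=\partial\bar{\theta}+\bar{\partial}\theta$ for a $(1,0)$-form $\theta$, this component equals $-G\bar{\partial}^*\bar{\partial}(\partial\theta)$, which vanishes only when $\partial\theta=0$ (there being no nonzero holomorphic $2$-forms on $\mathbb{C}P^n$). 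This matters for the signs: every identity you use downstream --- $\beta\wedge\omega^{n-2}=(n-2)!\,\ast\bigl((\Lambda_g\beta)\,\omega-\beta\bigr)$ and $-\beta\wedge\beta\wedge\omega^{n-2}=(n-2)!\,\|\beta\|_g^2\,d\mathrm{Vol}_g$ --- holds only for forms of pure type $(1,1)$; a $(2,0)+(0,2)$ component enters the Hodge--Riemann pairing with the \emph{opposite} sign, so if $\Pi_{\delta,g}(\eta)$ had such a component the claimed formula in item \ref{QinBal} would actually be false, not merely unproved.

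The statement you need is true, but proving it requires invoking the balanced hypothesis a second time (you use it only for the cross-term cancellation). For $n\geq 3$, the condition $d(\eta\wedge\omega^{n-2})=0$ is equivalent to $d\eta$ being a primitive $3$-form, and the K\"ahler identity $[\Lambda_g,d]=-\delta^c_g$ then converts $\Lambda_g(d\eta)=0$ into $\delta_g^c\eta=d(\Lambda_g\eta)$; comparing $(1,0)$-parts gives $\bar{\partial}^*\eta=i\,\partial(\Lambda_g\eta)$ up to sign conventions, whence $(\Pi_{\delta,g}\eta)^{2,0}=-\partial G\bar{\partial}^*\eta=\mp i\,\partial\partial G(\Lambda_g\eta)=0$. (For $n=2$ the issue is vacuous, since then $T_{\omega}\mathcal{B}^{2,\nu}$ consists of closed forms and $\Pi_{\delta,g}(\eta)=0$.) Once this type-purity lemma is in place, your cross-term computation, your primitivity argument for $\Lambda_g\beta$ (which is correct and rather clean), and your deduction of the kernel from the $\mathcal{Q}_\omega$-orthogonal splitting all go through.
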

	\begin{proof}
	First, the proof of item $\ref{QinBal}$ is given in Corollary 5.14 of \cite{junior2023balanced}. Now, to prove item $\ref{QinVomega}$, we use the same idea. Let
	\begin{align*}
		\mathcal{RH}: C^{2,\nu}{(\Lambda^{2})} \times C^{2,\nu}{(\Lambda^{2})} &\to C^{2,\nu}{(\Lambda^{2n})}\\
		(\eta, \xi) &\mapsto - \eta \wedge \xi \wedge \omega^{n-2},
	\end{align*}
	denote the \emph{Riemann-Hodge pairing} associated with $\omega$ (see also Definition B.5 of \cite{junior2023balanced}). Then, for every $\xi = dJ\beta + d^c \beta \in \mathcal{C}_\omega$, we have
	\begin{align*}
		\mathcal{Q}_{\omega}(\xi,\xi)&=\left(\int_{\mathbb{C}P^n} \xi\wedge \omega^{n-1}\right)^2-\left(\int_{\mathbb{C}P^n}{\xi\wedge \xi \wedge \omega^{n-2}} \right) \\
		&=\int_{\mathbb{C}P^n}{\mathcal{RH}(\xi, \xi)},
	\end{align*}
	Here, we used Stokes' Theorem to infer that $\int_{\mathbb{C}P^n} \xi\wedge \omega^{n-1}=0$. 
	
	On the other hand, by the Kähler identities and the fact that $\Lambda_g(\beta) = 0$, we obtain
	\begin{align*}
		\Lambda_g(\xi) &= (\Lambda_g \circ d)J\beta + (\Lambda_g \circ d^c)\beta \\
		&=-\delta^c_gJ\beta + \delta_g\beta\\
		&=0,
	\end{align*}
	where in the last equality we used that $\delta_g \beta = 0$. Hence, $\xi \in C^{2,\nu}(\Lambda_{\mathbb{R}}^{1,1})$ is a primitive form, which allows us to apply the \emph{Riemann-Hodge bilinear relations} (see Theorem B.6 in \cite{junior2023balanced}). Therefore,
	$$ \mathcal{Q}_{\omega}(\xi, \xi ) = \int_{\mathbb{C}P^n}{\mathcal{RH}(\xi, \xi)}= (n-2)! \int_{\mathbb{C}P^n} || \xi ||^2_g \, d\mathrm{Vol}_g.$$
 	\end{proof}
	
	\subsection*{Isosystolic Inequality for Gauduchon metrics}
	Proposition~\ref{QonVandW} establishes an infinitesimal analog of Theorem~\ref{maintheoremA}. In this section, we use Taylor expansions to lift this infinitesimal result to a local statement. The central technical task is to express the mean functional~\eqref{meanfunctional} in appropriate coordinates in order to properly apply the Taylor expansion.
        
	Prior to construct the aforementioned coordinates, we recall that under the global chart $\Phi: \mathcal{G}^{2,\nu}(\mathbb{C}P^n) \to C^{2,\nu}_{dd^c}(\Lambda^{n-1,n-1}_+)$, defined in~\eqref{trivialization}, the mean functional $\mathcal{M}$ on $\mathscr{G}^{2,\nu}(\mathbb{C}P^n)$ is expressed as $\mathcal{F} = \mathcal{M} \circ \Phi$.
	
	\begin{lema}\label{summarizinglemmablc} 
		Let $g_\omega \in \mathscr{K}(\mathbb{C}P^n)$ be a smooth normalized Kähler metric. There exist open neighborhoods $U \subset \mathcal{K}^{2,\nu}(\mathbb{C}P^n)$ of $\omega$ and $V\subset \mathcal{C}_\omega \oplus \mathcal{D}_{\omega}$ of $0$, along with a smooth diffeomorphism $\rho:U\times V \to \rho(V\times U)\subset  C_{dd^c}^{2,\nu}(\Lambda_+^{n-1,n-1})$, such that the map $F \doteq \mathcal{F}\circ \rho:U \times V  \to \mathbb{R}$ satisfies the following properties:  
		\begin{enumerate}[label=\alph*),ref=(\alph*)]
			\item\label{summarizinglemmablc1}  ${F}$ is constant over the set $U\times \{0\}$.
			\item\label{summarizinglemmablc2}  $\restr{d {F}}{\omega}\equiv 0$, for every $\omega \in U$.
			\item\label{summarizinglemmablc3}  The Hessian map of $F$ at $(\omega,0)$ is a symmetric, semi-positive definite bilinear form. Moreover, its kernel is given by $T_{\omega}\mathcal{K}^{2,\nu}(\mathbb{C}P^n)$.
			\item\label{summarizinglemmablc4}  Given $(\theta,\lambda) \in U \times V$, the restriction $\restr{d^2 {F}}{(\theta,\lambda)}: \mathcal{C}_\omega \oplus \mathcal{D}_{\omega} \times \mathcal{C}_\omega \oplus \mathcal{D}_{\omega} \to \mathbb{R}$ is given by 
			\begin{equation}\label{transfooftheHess}
				\restr{d^2 {F}}{(\theta, \lambda)}\left(\eta,\eta\right)= \restr{d^2\mathcal{F}}{\rho(\theta,\lambda)}\left( \restr{d\Phi}{\omega}\cdot \eta,\restr{d\Phi}{\omega}\cdot \eta \right).
			\end{equation}
			In particular, $\restr{d^2F}{\omega}\left( (\zeta,\xi),(\zeta,\xi)\right)=(n!)^{\frac{n-1}{n}} \left\{ ||\zeta||^2_{L^2(g)} + ||\xi||^2_{L^2(g)}  \right\}$, for every $\zeta \in \mathcal{C}_\omega$ and $\xi \in \mathcal{D}_\omega$.
		\end{enumerate} 
	\end{lema}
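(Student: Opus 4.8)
The plan is to straighten the splitting of Corollary~\ref{decompTGinKandB} using the linear structure of the ambient Banach space $C^{2,\nu}_{dd^c}(\Lambda^{n-1,n-1})$ in which $\mathcal{F}$ lives. Concretely, I would set
\[
\rho(\theta,v) \doteq \Phi(\theta) + \restr{d\Phi}{\omega}\cdot v, \qquad \theta \in U \subset \mathcal{K}^{2,\nu}(\mathbb{C}P^n),\ v \in V \subset \mathcal{C}_\omega \oplus \mathcal{D}_\omega,
\]
where $\restr{d\Phi}{\omega}\cdot v = (n-1)\,v \wedge \omega^{n-2}$ by~\eqref{trivialization}. The crucial structural feature is that $\rho$ is \emph{affine} in the second variable, which I will exploit for item~\ref{summarizinglemmablc4}. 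Since $\Phi(\theta)=\theta^{n-1}$ is $d$-closed and each $v \in \mathcal{C}_\omega\oplus\mathcal{D}_\omega \subset T_\omega\mathcal{G}^{2,\nu}(\mathbb{C}P^n)$ satisfies $dd^c(v\wedge\omega^{n-2})=0$, the map $\rho$ takes values in $C^{2,\nu}_{dd^c}(\Lambda^{n-1,n-1})$; wedging with the \emph{smooth} form $\omega^{n-2}$ preserves the Hölder class, so no derivative is lost, and after shrinking $V$ the image lies in the positive cone $C^{2,\nu}_{dd^c}(\Lambda^{n-1,n-1}_+)$. The first task is to verify that $\rho$ is a local diffeomorphism near $(\omega,0)$: its differential there is $\restr{d\rho}{(\omega,0)}(\dot\theta,\dot v)=\restr{d\Phi}{\omega}\cdot(\dot\theta+\dot v)$, which, read through the decomposition $T_\omega\mathcal{G}^{2,\nu}(\mathbb{C}P^n)=T_\omega\mathcal{K}^{2,\nu}(\mathbb{C}P^n)\oplus\mathcal{C}_\omega\oplus\mathcal{D}_\omega$ of Corollary~\ref{decompTGinKandB}, is exactly $\restr{d\Phi}{\omega}$, a Banach-space isomorphism since $\Phi$ is a global chart (Proposition~\ref{chartsofKBG}). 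The inverse function theorem then produces $U$ and $V$ on which $\rho$ is a diffeomorphism onto an open subset of the positive cone.

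Items~\ref{summarizinglemmablc1} and~\ref{summarizinglemmablc2} are then immediate. For~\ref{summarizinglemmablc1}, $F(\theta,0)=\mathcal{F}(\Phi(\theta))=\mathcal{M}(g_\theta)$, and since $[\omega_\theta]\in\mathbb{R}[\omega_{FS}]$ for Kähler $\theta$, formula~\eqref{definitionofM} collapses to the constant $\frac{(n!)^{\frac{n-1}{n}}}{(n-1)!}$. For~\ref{summarizinglemmablc2}, the chain rule gives $\restr{dF}{(\theta,0)}=\restr{d\mathcal{F}}{\Phi(\theta)}\circ\restr{d\rho}{(\theta,0)}$, and $\restr{d\mathcal{F}}{\Phi(\theta)}=0$ because every Kähler metric is a critical point of $\restr{\mathcal{M}}{\mathscr{G}}$ (Proposition~\ref{FVFforMrestrG}). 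Item~\ref{summarizinglemmablc4} is where the affineness pays off: since $\rho(\theta,\cdot)$ is affine, $\partial_v^2\rho=0$, so the second-order chain rule loses its first-order correction term and produces~\eqref{transfooftheHess} at \emph{every} point $(\theta,\lambda)$, not merely at critical points.

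The crux is item~\ref{summarizinglemmablc3}. Evaluating~\eqref{transfooftheHess} at $(\omega,0)$---where $\rho(\omega,0)=\Phi(\omega)$ is the Kähler critical point---Proposition~\ref{SVFoverKahler} identifies $\restr{d^2\mathcal{F}}{\Phi(\omega)}(\restr{d\Phi}{\omega}\cdot\eta,\restr{d\Phi}{\omega}\cdot\eta)$ with $\frac{(n!)^{\frac{n-1}{n}}}{(n-2)!}\,\mathcal{Q}_\omega(\eta,\eta)$, the quadratic form~\eqref{definitionofQ}. Decomposing a general tangent vector as $\eta=\dot\theta+\zeta+\xi$ along $T_\omega\mathcal{K}^{2,\nu}\oplus\mathcal{C}_\omega\oplus\mathcal{D}_\omega$ and invoking the $\mathcal{Q}_\omega$-orthogonality (of $T_\omega\mathcal{K}^{2,\nu}$ against everything, and of $\mathcal{C}_\omega$ against $\mathcal{D}_\omega$), the cross terms drop out and Proposition~\ref{QonVandW} evaluates the diagonal contributions as $(n-2)!\,(\|\zeta\|^2_{L^2(g)}+\|\xi\|^2_{L^2(g)})$. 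This is symmetric and semi-positive definite, vanishes precisely when $\zeta=\xi=0$, i.e.\ on $T_\omega\mathcal{K}^{2,\nu}(\mathbb{C}P^n)$, and yields the stated ``in particular'' formula. The main obstacle is the bookkeeping rather than a single hard estimate: one must keep the regularity classes consistent under $\Phi$ and $\restr{d\Phi}{\omega}$, confirm that the inverse function theorem applies in this Banach-manifold setting, and---most importantly---deploy the $\mathcal{Q}_\omega$-orthogonality carefully so that the kernel is no larger than $T_\omega\mathcal{K}^{2,\nu}(\mathbb{C}P^n)$.
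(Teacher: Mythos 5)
Your proposal is correct and follows essentially the same route as the paper: the same affine map $\rho(\theta,\lambda)=\Phi(\theta)+\restr{d\Phi}{\omega}\cdot\lambda$, the inverse function theorem via the identification $\restr{d\rho}{(\omega,0)}=\restr{d\Phi}{\omega}$ through Corollary~\ref{decompTGinKandB}, Proposition~\ref{FVFforMrestrG} for item~\ref{summarizinglemmablc2}, the affineness-in-$\lambda$ trick for~\eqref{transfooftheHess}, and Proposition~\ref{QonVandW} with the $\mathcal{Q}_\omega$-orthogonality for item~\ref{summarizinglemmablc3}. The only (harmless) deviations are that you verify constancy in item~\ref{summarizinglemmablc1} by a direct cohomological evaluation of~\eqref{definitionofM} rather than citing Proposition~\ref{systolexM,MsigmaonGaud}, and that you explicitly check $\rho$ lands in the positive $dd^c$-closed cone, a point the paper leaves implicit.
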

	\begin{proof}
	Fix $g_\omega \in \mathscr{K}(\mathbb{C}P^n)$ and define $\mathcal{E}_{\omega} = \mathcal{C}_\omega \oplus \mathcal{D}_{\omega}$. Consider the smooth map
	\begin{align}\label{defofrho}
		\begin{split}
			\rho \colon \mathcal{K}^{2,\nu}(\mathbb{C}P^n) \times \mathcal{E}_{\omega} &\to C^{2,\nu}_{dd^c}(\Lambda^{n-1,n-1}_{\mathbb{R}}), \\
		(\theta, \lambda) &\mapsto \Phi(\theta) + d\Phi|_{\omega}\cdot \lambda,
		\end{split} 
	\end{align}
	 By Corollary~\ref{decompTGinKandB}, we can identify the differential $d\rho|_{(\omega,0)}$ with $d\Phi|_{\omega}$. Since $\Phi$ is a diffeomorphism, the map
	\[
	d\rho|_{(\omega,0)} \colon T_{\omega} \mathcal{G}^{2,\nu}(\mathbb{C}P^n) \to C^{2,\nu}_{dd^c}(\Lambda^{n-1,n-1}_{\mathbb{R}})
	\]
	is a linear isomorphism of Banach spaces. Applying the inverse function theorem for Banach spaces, we conclude that $\rho$ is a local diffeomorphism at $(\omega,0)$. Consequently, there exist neighborhoods $U \subset \mathcal{K}^{2,\nu}(\mathbb{C}P^n)$ of $\omega$ and $V \subset \mathcal{E}_{\omega}$ of $0$ such that the restriction
	\[
	\rho \colon U \times V \to \rho(U \times V) \subset C^{2,\nu}_{dd^c}(\Lambda^{n-1,n-1}_+)
	\]
	is a diffeomorphism onto its image.

	We now establish the claimed properties of the diffeomorphism $\rho$. 
	Item~\ref{summarizinglemmablc1} follows from Proposition~\ref{systolexM,MsigmaonGaud}, 
	and Item~\ref{summarizinglemmablc2} follows directly from Proposition~\ref{FVFforMrestrG}. Moreover, Item~\ref{summarizinglemmablc3} follows from the fact that $\Phi(\omega)$ is a critical point for $\mathcal{F}$ together with Proposition~\ref{QonVandW}.    
	
	Finally, to establish Item~\ref{summarizinglemmablc4}, take $(\theta, \lambda) \in U \times V$ and $\eta \in \mathcal{E}_\omega$. The definition of $\rho$ in~\eqref{defofrho} implies that for all sufficiently small $t \in \mathbb{R}$,
	\[
	F(\theta, \lambda + t\eta) = \mathcal{F}\left( \rho(\theta, \lambda) + t \, d\Phi|_{\omega} \cdot \eta \right).
	\]
	Differentiating this equality twice with respect to $t$ at $t = 0$ yields the identity~\eqref{transfooftheHess}. In particular, Proposition~\ref{QonVandW} implies the stated expression for the Hessian at $\omega$.       
	\end{proof}

	The preceding Lemma, specifically Item~\ref{summarizinglemmablc4}, establishes that the Hessian of the normalized holomorphic $(n-1)$-systole lacks coercivity in the $C^{2,\nu}$-topology. Consequently, a direct application of the Taylor expansion argument is not possible. To proceed with the argument we must first demonstrate continuity of the mapping
	\begin{align*}
		\mathcal{K}^{2,\nu}(\mathbb{C}P^n) &\to \mathcal{S}_2\left(C^{2,\nu}(\Lambda^{n-1,n-1})\right) \\
		\omega &\mapsto \left. d^2\mathcal{F} \right|_{\Phi(\omega)},
	\end{align*}
	where the space of symmetric bilinear forms $\mathcal{S}_2\left(C^{2,\nu}(\Lambda^{n-1,n-1})\right)$ is equipped with the $L^2$-norm topology. 
	
	In view of the expression derived for the map $\omega \mapsto \restr{d^2\mathcal{F}}{\Phi(\omega)}$ in Theorem~\ref{svfforf}, we can carry the proof of Lemma 5.16 in~\cite{junior2023balanced} without any significant changes in order to prove the following. 
	\begin{lema}\label{L2boundofHess}
		Let $g_\omega \in \mathscr{K}(\mathbb{C}P^n)$ be a smooth normalized Kähler metric. Then, there exist a neighborhood $\mathcal{N} \subset \mathcal{G}^{2,\nu}(\mathbb{C}P^n)$ of $\omega$, in the $C^{2,\nu}$-topology, such that for each $\theta \in \mathcal{N}$ and $\eta \in T_{\omega}\mathcal{G}^{2,\nu}(\mathbb{C}P^n)$ the inequality
		\begin{equation*}
			\left| \restr{d^2 \mathcal{F} }{\Phi(\omega)}\left(\mu,\mu\right) - \restr{d^2 \mathcal{F} }{\Phi(\theta)}\left(\mu,\mu\right) \right| \leq \frac{(n!)^{\frac{n-1}{n}}}{2} \,||\eta||^2_{L^2(g)},
		\end{equation*}
		holds, where $\mu= \restr{d \Phi}{\omega} \cdot \eta \in C^{2,\nu}_{dd^c}\big(\Lambda^{n-1,n-1}_{\mathbb{R}} \big)$. 
	\end{lema}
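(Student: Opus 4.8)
The plan is to read the second Hessian off Theorem~\ref{svfforf} and estimate the difference of the two bilinear forms one monomial at a time, keeping the vector $\mu=\restr{d\Phi}{\omega}\cdot\eta$ fixed throughout. At the nearby basepoint $\theta$ this same $\mu$ is the image of $\tilde\eta\doteq\restr{d\Psi}{\Phi(\theta)}\cdot\mu=(\restr{d\Phi}{\theta})^{-1}\mu$ (using that $\Phi$ is a diffeomorphism by Proposition~\ref{chartsofKBG}), so Theorem~\ref{svfforf} yields $\restr{d^2\mathcal{F}}{\Phi(\theta)}(\mu,\mu)$ as the identical polynomial expression with $\omega$ replaced by $\theta$ and $\eta$ by $\tilde\eta$. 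Writing $a_\omega=\int_{\mathbb{C}P^n}\omega^{n-1}\wedge\omega_{FS}$, $b_\omega=\int_{\mathbb{C}P^n}\mu\wedge\omega$, $c=\int_{\mathbb{C}P^n}\mu\wedge\omega_{FS}$ and $d_\omega=\int_{\mathbb{C}P^n}\mu\wedge\eta$ (with $a_\theta,b_\theta,d_\theta$ the analogues at $\theta$, while $c$ is independent of the basepoint), the difference $\restr{d^2\mathcal{F}}{\Phi(\omega)}(\mu,\mu)-\restr{d^2\mathcal{F}}{\Phi(\theta)}(\mu,\mu)$ becomes a fixed polynomial in these scalars, which I would expand telescopically into a sum of products each containing exactly one of the factors $a_\omega-a_\theta$, $b_\omega-b_\theta$ or $d_\omega-d_\theta$.

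The first bookkeeping observation is that every monomial of the Hessian carries total degree two in $\eta$. Indeed, since $\mu=(n-1)\eta\wedge\omega^{n-2}$ gives $\|\mu\|_{L^2(g)}\leq C\|\eta\|_{L^2(g)}$ by multiplication with the bounded form $\omega^{n-2}$, Cauchy--Schwarz yields $|b_\omega|,|c|=O(\|\eta\|_{L^2(g)})$, while $|d_\omega|=O(\|\eta\|_{L^2(g)}^2)$ and $a_\omega=O(1)$; the same estimates hold at $\theta$, using that $\restr{d\Psi}{\Phi(\theta)}$ is $L^2$-bounded near $\omega$. Hence each term of the telescoped difference is the product of one variation factor with remaining factors whose combined $\eta$-homogeneity completes the total to two, so the whole term is $O(\|\omega-\theta\|_{C^0}\,\|\eta\|_{L^2(g)}^2)$ once the variations are controlled. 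Two of the three variations are immediate: $|a_\omega-a_\theta|=\bigl|\int_{\mathbb{C}P^n}(\omega^{n-1}-\theta^{n-1})\wedge\omega_{FS}\bigr|=O(\|\omega-\theta\|_{C^0})$ and $|b_\omega-b_\theta|=\bigl|\int_{\mathbb{C}P^n}\mu\wedge(\omega-\theta)\bigr|\leq\|\mu\|_{L^2(g)}\|\omega-\theta\|_{L^2(g)}=O(\|\eta\|_{L^2(g)}\|\omega-\theta\|_{C^0})$, both following from the continuity of $\theta\mapsto\theta^{n-1}$ and $\theta\mapsto\theta$ in the $C^{2,\nu}$-topology.

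The crux---and the only place where the absence of $C^{2,\nu}$-coercivity flagged above could cause trouble---is the term $d_\omega-d_\theta=\int_{\mathbb{C}P^n}\mu\wedge(\eta-\tilde\eta)$, since $\eta-\tilde\eta=(\restr{d\Psi}{\Phi(\omega)}-\restr{d\Psi}{\Phi(\theta)})\mu$ a priori involves a full $C^{2,\nu}$-norm of $\mu$, i.e.\ of $\eta$. The key point is that $\Phi(\omega)=\omega^{n-1}$ and its inverse $\Psi$ are \emph{fibrewise algebraic} (Nemytskii) operators: extracting the $(1,1)$-form whose $(n-1)$-st power is a given positive $(n-1,n-1)$-form is a pointwise operation, so $\restr{d\Psi}{\sigma}$ and $\restr{d^2\Psi}{\sigma}$ are zeroth-order multiplication bundle maps whose coefficients depend smoothly, hence $C^0$-boundedly, on $\sigma$. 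Writing $\eta-\tilde\eta=-\int_0^1\restr{d^2\Psi}{\Phi(\omega)+s(\Phi(\theta)-\Phi(\omega))}\bigl(\Phi(\theta)-\Phi(\omega),\mu\bigr)\,ds$ and estimating the integrand pointwise then gives $\|\eta-\tilde\eta\|_{L^2(g)}\leq C\,\|\theta-\omega\|_{C^0}\,\|\mu\|_{L^2(g)}$, so that $|d_\omega-d_\theta|\leq\|\mu\|_{L^2(g)}\|\eta-\tilde\eta\|_{L^2(g)}=O(\|\theta-\omega\|_{C^0}\|\eta\|_{L^2(g)}^2)$, with no loss of derivatives.

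Combining the three estimates, every monomial of the telescoped difference is bounded by $C(\omega)\,\|\theta-\omega\|_{C^{2,\nu}}\,\|\eta\|_{L^2(g)}^2$ with a constant depending only on the fixed Kähler metric. Choosing the neighbourhood $\mathcal{N}$ of $\omega$ in the $C^{2,\nu}$-topology small enough that $C(\omega)\,\|\theta-\omega\|_{C^{2,\nu}}\leq\frac{(n!)^{\frac{n-1}{n}}}{2}$ for all $\theta\in\mathcal{N}$ then yields the claimed inequality. The main obstacle is precisely the $d_\omega-d_\theta$ term, and the essential ingredient for handling it in the $L^2$-norm of $\eta$ alone is the fibrewise-algebraic nature of $\Phi$ and $\Psi$; this is what makes the argument of Lemma~5.16 in~\cite{junior2023balanced} transfer without significant changes.
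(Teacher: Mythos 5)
Your proof is correct and is essentially the argument the paper intends when it asserts that Lemma~5.16 of \cite{junior2023balanced} carries over: telescope the difference of the two polynomial Hessian expressions from Theorem~\ref{svfforf}, bound the scalar coefficients in $L^2$ by Cauchy--Schwarz, and handle the only dangerous term $\int_{\mathbb{C}P^n}\mu\wedge(\eta-\tilde{\eta})$ by the observation you correctly single out, namely that $\Phi$ and $\Psi$ are fibrewise algebraic, so $d\Psi$ acts by pointwise multiplication with coefficients depending continuously on the base point and no derivatives of $\eta$ are lost. The one detail you gloss over is that Theorem~\ref{svfforf} is stated only for normalized forms while $\theta\in\mathcal{N}$ need not satisfy $\int_{\mathbb{C}P^n}\theta^n=1$; this is repaired by the scale invariance of $\mathcal{F}$, which yields $\restr{d^2\mathcal{F}}{\Phi(\theta)}(\mu,\mu)=\left(\int_{\mathbb{C}P^n}\theta^n\right)^{-2(n-1)/n}\restr{d^2\mathcal{F}}{\Phi(\theta_1)}(\mu,\mu)$ for the normalized rescaling $\theta_1$ of $\theta$, an extra factor of the form $1+O(\|\theta-\omega\|_{C^0})$ that your telescoping absorbs.
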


	Now we move on to prove Theorem~\ref{maintheoremA}, which we rewrite below in terms of the functional $\mathcal{F}:  C^{2,\nu}_{dd^c}(\Lambda^{n-1,n-1}_+) \to \mathbb{R}$.    
	
	\begin{teo}\label{maintheorem1inF}
		There is an open set $ {\Phi}\left(\mathcal{K}^{2,\nu} (\mathbb{C}P^n)\cap \Omega^{1,1}(\mathbb{C}P^n)\right) \subset \mathcal{U} \subset C_{dd^c}^{2,\nu}\big(\Lambda_{+}^{n-1,n-1}\big)$, in the ${C}^{2,\nu}$-topology, such that for every form $\lambda \in \mathcal{U}$ 
		$$\mathcal{F}(\lambda) \geq \mathcal{F}(\omega_{FS}^{n-1}).$$
		Moreover, $\lambda \in \mathcal{U}$ satisfies the equality if and only if $\lambda \in {\Phi}\left(\mathcal{K}^{2,\nu}(\mathbb{C}P^n)\right)\cap \mathcal{U}$.
	\end{teo}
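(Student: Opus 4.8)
The plan is to prove the inequality locally around each smooth normalized Kähler form and then patch the resulting neighborhoods into a single tube $\mathcal{U}$ about the Kähler locus. First I would fix a smooth normalized Kähler form $\omega$ and pass to the coordinates $\rho\colon U\times V\to C^{2,\nu}_{dd^c}(\Lambda^{n-1,n-1}_+)$ furnished by Lemma~\ref{summarizinglemmablc}, with $U\subset\mathcal{K}^{2,\nu}(\mathbb{C}P^n)$ a neighborhood of $\omega$ and $V\subset\mathcal{C}_\omega\oplus\mathcal{D}_\omega$ a neighborhood of $0$, writing $F=\mathcal{F}\circ\rho$. The decisive structural fact is that in these coordinates the Kähler locus is exactly $U\times\{0\}$, along which $F$ is \emph{constant} (Item~\ref{summarizinglemmablc1}) and \emph{critical} (Item~\ref{summarizinglemmablc2}); thus each nearby Gauduchon form $\rho(\theta,\lambda)$ should be compared to its Kähler foot $\rho(\theta,0)=\Phi(\theta)$ through a one-parameter Taylor expansion in the transverse variable $\lambda$.

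Carrying out that expansion for fixed $\theta\in U$ and $\lambda\in V$ gives
\[
F(\theta,\lambda)=F(\theta,0)+\left.\tfrac{d}{dt}\right|_{t=0}F(\theta,t\lambda)+\int_0^1(1-t)\,\tfrac{d^2}{dt^2}F(\theta,t\lambda)\,dt.
\]
The zeroth-order term $F(\theta,0)$ equals the common Kähler value $\mathcal{F}(\omega_{FS}^{n-1})$: for Kähler $\theta$ the integrals in~\eqref{definitionofM} are cohomological and every Kähler class on $\mathbb{C}P^n$ is a multiple of $[\omega_{FS}]$, so $\mathcal{F}$ takes the single value $(n!)^{\frac{n-1}{n}}/(n-1)!$ on the whole Kähler locus (equivalently, combine Item~\ref{summarizinglemmablc1} with connectedness of that locus). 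The first-order term vanishes by Item~\ref{summarizinglemmablc2}, and by the transformation rule~\eqref{transfooftheHess} the integrand is $\tfrac{d^2}{dt^2}F(\theta,t\lambda)=\restr{d^2\mathcal{F}}{\rho(\theta,t\lambda)}(\mu,\mu)$ with $\mu=\restr{d\Phi}{\omega}\cdot\lambda$. Hence the whole problem reduces to bounding the Hessian of $\mathcal{F}$ from below along the segment $t\mapsto\rho(\theta,t\lambda)$.

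The hard part — and the reason a naive Taylor argument does not apply — is the mismatch of topologies: by Item~\ref{summarizinglemmablc4} (itself a consequence of Proposition~\ref{QonVandW}) the Hessian is positive definite only in the $L^2$-norm, whereas we work in the $C^{2,\nu}$-topology, so the quadratic term cannot be expected to dominate a generic $C^{2,\nu}$-small remainder. The resolution is to measure everything in $L^2$. Shrinking $U$ and $V$ so that the entire segment $\rho(\theta,t\lambda)=\Phi(\theta)+t\,\restr{d\Phi}{\omega}\cdot\lambda$ stays, under $\Psi$, inside the neighborhood $\mathcal{N}$ of Lemma~\ref{L2boundofHess} — possible by continuity, since the segment is affine in the target Banach space with both endpoints near $\Phi(\omega)$ — I would estimate, for every $t\in[0,1]$,
\[
\restr{d^2\mathcal{F}}{\rho(\theta,t\lambda)}(\mu,\mu)\;\geq\;\restr{d^2\mathcal{F}}{\Phi(\omega)}(\mu,\mu)-\tfrac12(n!)^{\frac{n-1}{n}}\|\lambda\|^2_{L^2(g)}\;=\;\tfrac12(n!)^{\frac{n-1}{n}}\|\lambda\|^2_{L^2(g)},
\]
where the inequality is Lemma~\ref{L2boundofHess} and the final identity is the explicit value $\restr{d^2\mathcal{F}}{\Phi(\omega)}(\mu,\mu)=(n!)^{\frac{n-1}{n}}\|\lambda\|^2_{L^2(g)}$ from Item~\ref{summarizinglemmablc4}. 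The argument closes precisely because the Lipschitz constant of Lemma~\ref{L2boundofHess} is exactly half the coercivity constant, leaving a strictly positive $L^2$-margin.

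Integrating this pointwise bound yields $F(\theta,\lambda)\geq\mathcal{F}(\omega_{FS}^{n-1})+\tfrac14(n!)^{\frac{n-1}{n}}\|\lambda\|^2_{L^2(g)}$, which is $\geq\mathcal{F}(\omega_{FS}^{n-1})$ with strict inequality unless $\lambda=0$; this simultaneously establishes the local inequality and shows that equality forces $\lambda=0$, i.e.\ $\rho(\theta,\lambda)=\Phi(\theta)\in\Phi(\mathcal{K}^{2,\nu}(\mathbb{C}P^n))$. Finally I would take $\mathcal{U}$ to be the union of the chart images $\rho(U\times V)$ over all smooth normalized Kähler forms $\omega$: this set is $C^{2,\nu}$-open, contains $\Phi(\mathcal{K}^{2,\nu}(\mathbb{C}P^n)\cap\Omega^{1,1}(\mathbb{C}P^n))$, and both the global inequality and the equality characterization follow, since every point of $\mathcal{U}$ lies in some such chart while $\mathcal{F}$ is constant on the Kähler locus.
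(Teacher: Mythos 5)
Your proposal is correct and follows essentially the same route as the paper's proof: the coordinates of Lemma~\ref{summarizinglemmablc}, the $L^2$ Hessian comparison of Lemma~\ref{L2boundofHess} along the transverse segment, the resulting margin $\tfrac{1}{4}(n!)^{\frac{n-1}{n}}\|\lambda\|^2_{L^2(g)}$, and the union of chart images over the Kähler locus. The only differences are cosmetic — you use the integral form of the Taylor remainder where the paper uses the Lagrange form, and you spell out the cohomological reason that $\mathcal{F}$ is constant on Kähler forms (the paper should note, as you implicitly must, that non-normalized Kähler forms are absorbed via homothety invariance of $\mathcal{F}$).
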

	\begin{proof}
		First we fix $g_{\omega} \in \mathscr{K}(\mathbb{C}P^n)$ a normalized smooth Kähler metric. Now let 
		$$ \rho:U\times V  \subset \mathcal{K}^{2,\nu}(\mathbb{C}P^n)\times \mathcal{C}_\omega \oplus \mathcal{D}_\omega \to \rho(V\times U)\subset  C_{dd^c}^{2,\nu}(\Lambda_+^{n-1,n-1})$$ 
		be the coordinates constructed in Lemma~\ref{summarizinglemmablc}, and also set $F = \mathcal{F} \circ \rho : U \times V \to \mathbb{R}$. 
		
		Without loss of generality we can suppose that $U$ and $V$ are open convex sets such that $\mathcal{W}_\omega \doteq \rho(U\times V)$ is contained in $\Phi(\mathcal{N})$, for $\mathcal{N}$ neighborhood of $\omega$ satisfying the properties of Lemma~\ref{L2boundofHess}.   
		
		The second order Taylor expansion with Lagrange remainder around $\theta \in U$ implies that for every $\eta \in V$ there is a constant $c=c(\eta) \in (0,1)$ such that:
		
		\begin{align*}
			{F}(\theta,\eta) &={F}(\theta) + \restr{d{F}}{\theta}\cdot \eta + \frac{1}{2}\restr{d^2 {F}}{(\theta,c \eta)}(\eta,\eta)  \\
			& = {F}(\theta) + \restr{d{F}}{\theta}\cdot \eta + \frac{1}{2}\restr{d^2{F}}{\omega}(\eta,\eta) + \frac{1}{2}\left( \restr{d^2 {F}}{(\theta,c \eta)}(\eta,\eta) - \restr{d^2 {F}}{\omega}(\eta,\eta) \right)\\
			&= \mathcal{F}(\omega^{n-1}_{FS}) + \frac{1}{2}\restr{d^2{F}}{\omega}(\eta,\eta) + \frac{1}{2}\left( \restr{d^2 {F}}{(\theta,c \eta)}(\eta,\eta) - \restr{d^2 {F}}{\omega}(\eta,\eta) \right).   
		\end{align*}    
	Were in the last equality we have used Items~\ref{summarizinglemmablc1} and \ref{summarizinglemmablc2} of Lemma~\ref{summarizinglemmablc}. 
	
	Since $\mathcal{C}_{\omega} \subset \operatorname{im}(\delta_g \delta^c_g)$ and $\mathcal{D}_{\omega} \subset \operatorname{im}(dJ + d^c)$, we conclude that $\mathcal{C}_{\omega}$ and $\mathcal{D}_{\omega}$ are $L^2(g)$-orthogonal. In particular, by Item~\ref{summarizinglemmablc4} of Lemma~\ref{summarizinglemmablc}, we obtain that
	$$\restr{d^2{F}}{\omega}(\eta,\eta) = (n!)^{\frac{n-1}{n}}||\eta||^2_{L^2(g)}. $$  
	
	In particular, combining equation~\eqref{transfooftheHess} with Lemma~\ref{summarizinglemmablc}, we obtain that for every element $\lambda = \rho(\theta, \eta) \in \mathcal{W}_\omega$, 
	$$\mathcal{F}(\lambda) \geq \mathcal{F}(\omega^{n-1}_{FS}) + \frac{(n!)^{\frac{n-1}{n}}}{4}||\eta||^2_{L^2(g)}.$$ 
	And hence the desired inequality in the open set $\mathcal{W}_\omega$. Moreover, equality holds if and only if $\lambda=\rho(\theta, 0) \in \Phi(\mathcal{K}^{2,\nu}(\mathbb{C}P^n))$. 
	
	The open set $\mathcal{U}$ is obtained by taking the union of $\mathcal{W}_\omega$ along the set of Kähler forms, noting that we can drop the normalization hypotheses via the homothety invariance of the normalized holomorphic $(n-1)$-systole.  
	\end{proof}
	   
	\section{Applications}\label{section:Applications}
	
	This section is devoted to applications of the results derived in Sections~\ref{section:VariationalFormulae} and~\ref{section:IsosystolicInequalities}. In particular, we establish the proofs of Corollaries~\ref{maintheoremB}, \ref{maintheoremC} and Theorem~\ref{maintheoremE}.
	
    In our first applications, we prove that every tangent direction to the space of Hermitian metrics can be integrated into a $1$-parameter perturbation of  the Fubini-Study metric, along which the normalized holomorphic $(n-1)$-systole remains constant, is strictly increasing, or is strictly decreasing.
    
    \begin{prop}
    	For any $h \in \Omega^{1,1}(\mathbb{C}P^n)$, there exists a $1$-parameter family of Hermitian metrics 
    	$g_t \in \mathscr{H}(\mathbb{C}P^n)$ with the initial conditions $g_0 = g_{\text{FS}}$ and $\left.\frac{d}{dt}g_t\right|_{t=0} = h$, 
    	satisfying exactly one of the following conditions:
    	\begin{enumerate}[label=\alph*),ref=(\alph*)]
    		\item $\rho(g_t) = \rho(g_{\text{FS}})$ for all $t$;
    		\item $\rho(g_t) > \rho(g_{\text{FS}})$ for all $t \neq 0$;
    		\item $\rho(g_t) < \rho(g_{\text{FS}})$ for all $t \neq 0$.
    	\end{enumerate}   
    \end{prop}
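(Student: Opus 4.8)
The plan is to reduce the statement to a trichotomy governed by the position of $h$ within the nested tangent spaces $T_{g_{FS}}\mathscr{K}^{2,\nu}(\mathbb{C}P^n) \subset T_{g_{FS}}\mathscr{G}^{2,\nu}(\mathbb{C}P^n) \subset T_{g_{FS}}\mathscr{H}^{2,\nu}(\mathbb{C}P^n) \cong \Omega^{1,1}(\mathbb{C}P^n)$. The three alternatives $h \notin T_{g_{FS}}\mathscr{G}$, $h \in T_{g_{FS}}\mathscr{K}$, and $h \in T_{g_{FS}}\mathscr{G}\setminus T_{g_{FS}}\mathscr{K}$ are mutually exclusive and exhaust all directions, so matching them to (c), (a), and (b) respectively will also make the word \emph{exactly} automatic, since for any fixed $t\neq 0$ the sign of $\rho(g_t)-\rho(g_{FS})$ is determined, and hence a single family cannot satisfy two of the three pointwise sign conditions at once.

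If $h \notin T_{g_{FS}}\mathscr{G}^{2,\nu}(\mathbb{C}P^n)$, I would simply take the linear family $g_t \doteq g_{FS} + t\,h(J\cdot,\cdot)$, which lies in the open set $\mathscr{H}^{2,\nu}(\mathbb{C}P^n)$ and satisfies $\left.\tfrac{d}{dt}g_t\right|_{t=0} = h$ for $|t|$ small. Corollary~\ref{firstorderdesc} then applies directly and yields $\rho(g_t) < \rho(g_{FS})$ for all small $t \neq 0$, which is alternative (c) after shrinking the parameter interval. If instead $h \in T_{g_{FS}}\mathscr{K}^{2,\nu}(\mathbb{C}P^n)$, I would integrate $h$ inside the Kähler locus: since every Kähler class on $\mathbb{C}P^n$ is a positive multiple of $[\omega_{FS}]$, the $dd^c$-Lemma gives $h(J\cdot,\cdot) = a\,\omega_{FS} + dd^c u$, so that $\omega_t \doteq (1+ta)\omega_{FS} + t\,dd^c u$ is a Kähler form for $|t|$ small and the associated metrics $g_t \in \mathscr{K}^{2,\nu}(\mathbb{C}P^n)$ realize the direction $h$ (equivalently, one integrates within the Banach submanifold of Proposition~\ref{chartsofKBG}). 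Keeping $t$ small enough that $g_t$ lies in the neighborhood $\mathcal{U}$ of Theorem~\ref{maintheoremA}, the equality clause of that theorem forces $\rho(g_t) = \rho(g_{FS})$ for all such $t$, which is alternative (a).

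The remaining and genuinely delicate case is $h \in T_{g_{FS}}\mathscr{G}^{2,\nu}(\mathbb{C}P^n)\setminus T_{g_{FS}}\mathscr{K}^{2,\nu}(\mathbb{C}P^n)$. Using the decomposition of Corollary~\ref{decompTGinKandB} at $\omega_{FS}$, I would split $h = h_K + h_\perp$ with $h_K \in T_{\omega_{FS}}\mathcal{K}^{2,\nu}(\mathbb{C}P^n)$ and $0 \neq h_\perp \in \mathcal{C}_{\omega_{FS}} \oplus \mathcal{D}_{\omega_{FS}}$, the subspace on which the Hessian $\mathcal{Q}_{\omega_{FS}}$ is strictly positive (Proposition~\ref{QonVandW}). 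Let $\rho\colon U\times V \to \mathcal{W}_{\omega_{FS}}$ denote the adapted chart of Lemma~\ref{summarizinglemmablc} based at $\omega_{FS}$, choose a curve $\theta(t) \in U$ with $\theta(0)=\omega_{FS}$ and $\dot\theta(0)=h_K$, and define $g_t$ through $\omega_{g_t}^{n-1} \doteq \rho\big(\theta(t),\, t\,h_\perp\big)$. Because $d\rho|_{(\omega_{FS},0)}$ is identified with $d\Phi|_{\omega_{FS}}$ in Lemma~\ref{summarizinglemmablc}, differentiating at $t=0$ gives $\left.\tfrac{d}{dt}\omega_{g_t}\right|_{t=0} = h_K + h_\perp = h$, so the family is Gauduchon and realizes the prescribed direction. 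The key point is that the $V$-coordinate of $g_t$ equals $t\,h_\perp$, which is nonzero for $t \neq 0$; hence the coercive lower bound obtained in the proof of Theorem~\ref{maintheorem1inF}, namely $F(\theta,\eta) \geq \rho(g_{FS}) + \tfrac{(n!)^{(n-1)/n}}{4}\|\eta\|_{L^2(g_{FS})}^2$ uniformly over $\theta \in U$, yields $\rho(g_t) \geq \rho(g_{FS}) + \tfrac{(n!)^{(n-1)/n}}{4}\,t^2\,\|h_\perp\|_{L^2(g_{FS})}^2 > \rho(g_{FS})$ for all small $t \neq 0$, which is alternative (b).

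The hard part is exactly this last construction: one must realize the \emph{full} direction $h$, including its possibly nonzero Kähler component $h_K$, by a curve whose second-order growth is still controlled by the strictly positive part $h_\perp$. Routing the curve through the product chart of Lemma~\ref{summarizinglemmablc} resolves the difficulty, because there the Kähler locus is precisely the slice $V=\{0\}$ (so $g_t\notin\mathscr{K}$ for $t\neq 0$), the component $h_K$ is absorbed into the choice of $\theta(t)$ without disturbing the $V$-coordinate $t\,h_\perp$, and the quantitative lower bound is uniform in $\theta$. Everywhere else the argument is a direct citation of the first-variation descent result and of the local minimization theorem, and the exclusivity clause follows formally from the partition of $\Omega^{1,1}(\mathbb{C}P^n)$ into the three tangential strata.
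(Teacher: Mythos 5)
Your proof is correct and follows essentially the same route as the paper's: the same trichotomy by tangent strata, Corollary~\ref{firstorderdesc} for case (c), linear integration inside the Kähler locus for case (a), and the decomposition of Corollary~\ref{decompTGinKandB} combined with the coercive Taylor lower bound from the proof of Theorem~\ref{maintheorem1inF} for case (b). If anything, your handling of the third case—defining the curve explicitly through the chart as $\omega_{g_t}^{n-1} = \rho\big(\theta(t), t\,h_\perp\big)$ so that it stays Gauduchon and its velocity at $t=0$ is $h$—is a cleaner rendering of what the paper writes somewhat loosely as the linear curve $\omega_{FS} + t\zeta + t\xi$.
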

	\begin{proof}
	First, suppose $h \notin T_{g_{FS}} \mathscr{G}(\mathbb{C}P^n)$. By Corollary~\ref{firstorderdesc}, the family of Hermitian metrics $g_t \doteq g_{FS} + t h$ satisfies $\rho(g_t) < \rho(g_{FS})$ for all sufficiently small $t \neq 0$.
	
	Now set $\eta = h(\cdot, J\cdot)$ and assume $\eta \in T_{\omega_{FS}} \mathcal{G}^{2,\nu}(\mathbb{C}P^n)$. If $\eta \in T_{\omega_{FS}} \mathcal{K}^{2,\nu}(\mathbb{C}P^n)$, then the smooth family of metrics $g_t$ defined by their fundamental forms as $\omega_{g_t} = \omega_{FS} + t\eta$ consists of Kähler metrics. In particular, the normalized holomorphic $(n-1)$-systole remains constant for all $t$.
	
	Finally, suppose $\eta \in T_{\omega_{FS}} \mathcal{G}^{2,\nu}(\mathbb{C}P^n) \setminus T_{\omega_{FS}} \mathcal{K}^{2,\nu}(\mathbb{C}P^n)$. By Corollary~\ref{decompTGinKandB}, we have the decomposition
	\[
	T_{\omega_{FS}} \mathcal{G}^{2,\nu}(\mathbb{C}P^n) = T_{\omega_{FS}} \mathcal{K}^{2,\nu}(\mathbb{C}P^n) \oplus \mathcal{C}_{\omega_{FS}} \oplus \mathcal{D}_{\omega_{FS}}
	\]
	and it is an straightforward consequence of the proof of that corollary that the decomposition preserves the smooth regularity of the forms. Thus, there exist smooth forms $\zeta \in T_{\omega_{FS}} \mathcal{K}^{2,\nu}(\mathbb{C}P^n)$ and $\xi \in \mathcal{C}_{\omega_{FS}} \oplus \mathcal{D}_{\omega_{FS}}$ such that $\eta = \zeta + \xi$.    
	
	Define $\omega_t = \omega_{FS} + t\zeta + t\xi$. For sufficiently small $t$, $\omega_t$ lies in the coordinate domain from Lemma~\ref{summarizinglemmablc}. By the proof of Theorem~\ref{maintheorem1inF}, for $g_t = g_{\omega_t}$ we have
	$$\rho(g_t) \geq \rho(g_{FS})+tc(n)\,||\xi||^2_{L^2(g_{FS})}>\rho(g_{FS})$$
	for all $t\neq 0$, where $c(n) > 0$ is a dimensional constant. This completes the proof.  
	\end{proof}

	As our next application, we derive Theorem~\ref{maintheoremE} by combining P. Gauduchon's seminal work on the existence of Gauduchon metrics (stated below) with M. Gromov's observation that the integral geometric formula~\eqref{IGF} represents an open condition.    
	
	\begin{teo}[\cite{Gauduchon77}]\label{GauduchonThm}
		In each conformal class of Hermitian metrics on a closed complex manifold of complex dimension $n \geq 2$ there exists a Gauduchon
		metric. Moreover, this Gauduchon metric is unique, up to homothety.
	\end{teo}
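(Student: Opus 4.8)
The plan is to reduce this apparently nonlinear statement to a scalar linear elliptic equation and then to solve that equation with a \emph{positive} solution via the maximum principle and the Krein--Rutman theorem. Fix any reference Hermitian metric in the given conformal class on the closed complex manifold $M$, with fundamental form $\omega_0$. Every representative has fundamental form $e^{2f}\omega_0$ for a smooth real function $f$, so that its $(n-1)$-st power is $e^{2(n-1)f}\omega_0^{n-1}$. Setting $u = e^{2(n-1)f} > 0$, the Gauduchon condition $dd^c\big((e^{2f}\omega_0)^{n-1}\big)=0$ becomes the \emph{linear} equation $dd^c(u\,\omega_0^{n-1})=0$ in the positive unknown $u$. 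Since $dd^c(u\,\omega_0^{n-1})$ is a real $(n,n)$-form, it equals $(Lu)\,\omega_0^n$ for a uniquely determined function $Lu$, defining a second-order linear differential operator $L\colon C^\infty(M)\to C^\infty(M)$. Expanding, its top-order term is $dd^c u\wedge \omega_0^{n-1}$; since $n\geq 2$, wedging the $(1,1)$-form $dd^c u$ with the positive power $\omega_0^{n-1}$ is a pointwise nondegenerate contraction, proportional to the complex Laplacian of $u$ times $\omega_0^n$, so the principal symbol of $L$ agrees with that of a Laplacian and $L$ is elliptic.

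Next I would extract the key structural identity from Stokes' theorem. Because $dd^c\gamma=d(d^c\gamma)$ is exact for every form $\gamma$, integration over the closed manifold $M$ gives $\int_M (Lu)\,\omega_0^n = \int_M dd^c(u\,\omega_0^{n-1}) = 0$ for every $u\in C^\infty(M)$. With respect to the $L^2$-pairing induced by the volume form $\omega_0^n$, this says precisely that the constant function $1$ lies in the kernel of the formal adjoint $L^*$; equivalently $L^*(1)=0$, so $L^*$ has no zeroth-order term and annihilates constants. By E.~Hopf's strong maximum principle, a second-order elliptic operator with vanishing zeroth-order coefficient has only constants in its kernel, and hence $\ker L^* = \mathbb{R}\cdot 1$ is exactly one-dimensional.

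Existence and uniqueness then follow from the principal eigenvalue theory for non--self-adjoint second-order elliptic operators. Because $\omega_0$ need not be Kähler, $L$ carries a genuine first-order drift coming from $d(\omega_0^{n-1})$ (the Lee form), so $L$ is not self-adjoint and I cannot appeal to self-adjoint spectral theory. Instead, for a suitable real constant $c$ the shifted operator $L+c$ is invertible with a compact, positivity-improving Green's operator on $C^{0,\nu}(M)$; the Krein--Rutman theorem then yields a principal eigenvalue of $L$ carrying a strictly positive, algebraically simple eigenfunction, and the principal eigenvalues of $L$ and $L^*$ coincide. For $L^*$ the positive function $1$ is an eigenfunction with eigenvalue $0$, and since the eigenvalue admitting a positive eigenfunction is unique, the principal eigenvalue of $L^*$, and therefore of $L$, is $0$. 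Thus $\ker L = \mathbb{R}\cdot u_0$ with $u_0>0$; the function $f=\tfrac{1}{2(n-1)}\log u_0$ produces the desired Gauduchon representative, and simplicity of the principal eigenvalue gives uniqueness up to the homothety $u_0\mapsto c\,u_0$, $c>0$.

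The main obstacle is this final positivity-and-simplicity step: the non--self-adjointness of $L$ forces an appeal to Krein--Rutman (equivalently, to the Berestycki--Nirenberg--Varadhan principal eigenvalue) rather than elementary spectral theory, and one must verify carefully that the Green's operator is positivity-improving so that the principal eigenfunction is \emph{strictly} positive, which is what allows the conformal factor $u_0$ to be written as an exponential. By comparison, the symbol computation establishing ellipticity and the Stokes identity $L^*(1)=0$ are routine; it is worth noting that the hypothesis $n\geq 2$ is exactly what renders $L$ second-order elliptic, and correspondingly both the argument and the uniqueness conclusion degenerate in complex dimension one, where $\omega^{n-1}$ reduces to a constant and every conformal representative is trivially Gauduchon.
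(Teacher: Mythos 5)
Your proposal is correct and follows essentially the same route as the paper's sketch of Gauduchon's argument: the same linearization $dd^c(u\,\omega_0^{n-1})=0$ via the conformal factor of $\omega^{n-1}$, the same second-order elliptic operator $L$ defined by $dd^c(u\,\omega_0^{n-1})=(Lu)\,\omega_0^n$, and the same strategy of deducing positivity and uniqueness of the kernel element by studying the adjoint (for which $L^*1=0$ by Stokes) together with the maximum principle. Your appeal to Krein--Rutman/principal-eigenvalue theory simply makes explicit the ``adjoint plus maximum principle'' step that the paper leaves compressed, so the two arguments coincide in substance.
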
 
 	
	For our purposes, it will be useful to give a sketch of the proof in $\mathbb{C}P^n$. Fix $g \in \mathscr{H}(\mathbb{C}P^n)$, let $\omega$ be its fundamental form, and consider the following differential operator 
	$$L_g(f) \doteq \langle dd^c(f\omega^{n-1}),\omega^{n}\rangle_g.$$
	The unique (normalized) Gauduchon metric conformal to $g$ is given by $\phi^{\frac{1}{n-1}} g$, where $\phi$ is the unique solution to:
	\begin{equation}\label{harmoniceq}
		\begin{cases}
			L_g (\phi) = 0 \\
			\dashint_{\mathbb{C}P^n} \phi^2 \,d \mathrm{Vol}_{g_{FS}} = 1.
		\end{cases}
	\end{equation}   
	The differential operator $L_g$ is a second order elliptic differential operator, hence existence and regularity for solutions of~\eqref{harmoniceq} are guarantee by Schauder estimates and classical theory. Moreover, uniqueness and positive of the solution are given studying its adjoint together with the maximum principle.     
	 
	 To summarize this construction, we define the projection map 
	 \begin{align*}
	 	\Pi_{\mathscr{G}}: \mathscr{H}(\mathbb{C}P^n) &\to \mathscr{G}(\mathbb{C}P^n) \\
	 	g &\mapsto \phi^{\frac{1}{n-1}} g,
	 \end{align*}
	 where $\phi$ denotes the unique solution to equation~\eqref{harmoniceq} for the metric $g$. Applying Schauder estimates, we establish that $\Pi_{\mathscr{G}}$ is continuous in the $C^{\infty}$-topology.
	 
	 The next proposition establishes that the integral geometric formula~\eqref{IGF} constitutes an open condition, which is based on the arguments developed in \cite[\S4.A.2]{gromov_systole}. 
	 
	 \begin{prop}\label{geneGIF}
	 	In a $C^{\infty}$-neighborhood of the Fubini-Study metric in $\mathscr{G}(\mathbb{C}P^n)$, every metric $g$ admits a positive Radon measure $d\mu_g$ for which the integral geometric formula
	 	\begin{equation}\label{IGFforg}
	 		\int_{\mathbb{C}P^n}\left(\dashint_{{\Sigma}_{\sigma}}f \,d \mathrm{A}_g \right) d\mu_g = \int_{\mathbb{C}P^n}f \, d \mathrm{Vol}_g
	 	\end{equation}
	 	holds identically for $f \in C^{\infty}(\mathbb{C}P^n)$.   
	 \end{prop}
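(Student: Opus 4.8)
The plan is to recast the sought integral-geometric identity as a single linear equation for the density of $d\mu_g$, solve it explicitly at the Fubini--Study metric, and then propagate the solution to nearby Gauduchon metrics by a structured perturbation argument, monitoring positivity throughout.

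First I would look for $d\mu_g$ of the form $d\mu_g = \psi_g\, d\mathrm{Vol}_{g_{FS}}$ with $\psi_g \in C^\infty(\mathbb{C}P^n)$, and write the volume density as $w_g \doteq d\mathrm{Vol}_g/d\mathrm{Vol}_{g_{FS}} > 0$. Pairing \eqref{IGFforg} against an arbitrary $f \in C^\infty(\mathbb{C}P^n)$ and introducing the $g$-averaged Radon transform $R_g f(\sigma) \doteq \dashint_{\Sigma_\sigma} f\, d\mathrm{A}_g$ together with its $L^2(\mathrm{Vol}_{g_{FS}})$-adjoint $R_g^{*}$, the statement becomes equivalent to solving
\[
R_g^{*}\psi_g = w_g, \qquad \psi_g > 0 .
\]
Here the Gauduchon hypothesis enters pleasantly through Theorem~\ref{charofgaudmetric}: since $\mathrm{Area}_g(\Sigma_\sigma)$ is independent of $\sigma$, the normalization in $R_g$ is a constant and $R_g^{*}$ is a clean, smoothly varying deformation of the Fubini--Study operator. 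At $g = g_{FS}$ one has $R_{FS}^{*} = \mathcal{R}^{*}$, the dual Radon transform $f \mapsto \dashint_{\Xi_p} f\, d\mathrm{A}_{g_{FS}}$, and $w_{FS} \equiv 1$; the classical integral geometric formula \eqref{IGF}, i.e. $\mathcal{R}^{*}(1) = 1$, exhibits the strictly positive solution $\psi_{FS} \equiv 1$.

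Next I would invert and perturb. All the operators $R_g^{*}$ are associated to the \emph{same} fixed incidence correspondence \eqref{doublefibration}; only the integration weights change, and they stay $C^\infty$-close to the Fubini--Study ones. Regarding $\mathcal{R}^{*}$ as a Fourier integral operator of some negative order $-m$ attached to this correspondence, Helgason's inversion formula (as used in Proposition~\ref{Radonker}) provides a two-sided inverse $(\mathcal{R}^{*})^{-1}$ of the opposite order, bounded $H^{s+m} \to H^{s}$. Writing $R_g^{*} = \mathcal{R}^{*}(\mathrm{Id} + K_g)$ with $K_g \doteq (\mathcal{R}^{*})^{-1}(R_g^{*} - \mathcal{R}^{*})$, the difference $R_g^{*} - \mathcal{R}^{*}$ shares the canonical relation of $\mathcal{R}^{*}$ but carries a small amplitude, so it is bounded $H^{s} \to H^{s+m}$ with norm tending to $0$ as $g \to g_{FS}$; hence $K_g$ is bounded on the fixed space $H^{s}$ with $\|K_g\| \to 0$. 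A Neumann series then inverts $\mathrm{Id} + K_g$ and yields the smooth solution $\psi_g = (\mathrm{Id} + K_g)^{-1}(\mathcal{R}^{*})^{-1} w_g$, which satisfies $\psi_g \to \psi_{FS} \equiv 1$ in $C^{0}$ as $g \to g_{FS}$. Positivity of $\psi_g$ on a sufficiently small $C^\infty$-neighborhood follows, and $d\mu_g \doteq \psi_g\, d\mathrm{Vol}_{g_{FS}}$ is the desired positive Radon measure.

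The principal obstacle is precisely the transfer of invertibility from $g_{FS}$ to nearby metrics: the operators $R_g^{*}$ are smoothing, so invertibility is not open for free and a naive implicit function theorem on a single Banach space is unavailable, since $(\mathcal{R}^{*})^{-1}$ loses derivatives. The argument is made to work by exploiting two structural facts together---that the incidence geometry underlying every $R_g^{*}$ is identical, so that $R_g^{*} - \mathcal{R}^{*}$ is a genuinely small perturbation of fixed regularizing order rather than an uncontrolled operator, and that Helgason's explicit inverse at $g_{FS}$ supplies exactly the compensating order---so that the composite $K_g$ is small on a fixed Sobolev space. This is the integral-geometric ``openness'' observation of Gromov~\cite[\S4.A.2]{gromov_systole} realized in the present setting.
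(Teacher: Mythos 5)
Your reduction is exactly the paper's: both arguments seek the measure as a positive density, observe that \eqref{IGFforg} for all $f$ is equivalent to a linear equation for that density involving the dual of the metric-dependent Radon transform (the paper solves $\mathcal{R}^*_g(\psi_g)=1$ and takes $d\mu_g=\psi_g\,d\mathrm{Vol}_g$; your $R_g^*\psi_g=w_g$ with $d\mu_g=\psi_g\,d\mathrm{Vol}_{g_{FS}}$ is an equivalent normalization), solve it at $g_{FS}$ via Helgason, and then perturb. The difference lies in the perturbation mechanism. The paper never inverts the one-sided transform: it passes to the normal operator $\mathcal{T}_g=\mathcal{R}^*_g\circ\mathcal{R}_g$, which by the Bolker condition of the double fibration is an \emph{elliptic pseudodifferential operator} of order $2-2n$, hence Fredholm of index zero on Sobolev spaces; injectivity of $\mathcal{T}_{FS}$ (Helgason) makes it an isomorphism, and surjectivity of nearby $\mathcal{T}_g$ follows from norm-continuity of $g\mapsto\mathcal{T}_g$, which the paper anchors in Quinto's analysis \cite{Quinto81} of how generalized Radon transforms depend on their defining measures; smoothness and positivity of the solution then come from elliptic regularity. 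Your route---factor $R_g^*=\mathcal{R}^*(\mathrm{Id}+K_g)$ and run a Neumann series---is more explicit but shifts the burden to three points the paper's scheme avoids. First, you need $(\mathcal{R}^*)^{-1}$ to be a \emph{two-sided} inverse: Helgason's formula $\mathrm{Id}=\mathcal{R}^*\,p(\Updelta_{g_{FS}})\,\mathcal{R}$ only exhibits a right inverse, and you must add that $\mathcal{R}^*$ is injective (true here because the dual family $\{\Xi_p\}$ consists again of linear equators, so the argument of Proposition~\ref{Radonker} applies to it); the paper's index-zero argument needs only injectivity at $g_{FS}$. Second, your key estimate $\|R_g^*-\mathcal{R}^*\|_{H^s\to H^{s+m}}\to 0$ is asserted from general Fourier-integral-operator principles (same canonical relation, small amplitude); this is plausible and is morally the same continuity statement, but at the FIO level it is less standard than the pseudodifferential-operator statement the paper takes from \cite{Quinto81}, so it deserves a precise reference or proof. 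Third, the Neumann series inverts $\mathrm{Id}+K_g$ only on the single fixed $H^s$ (the neighborhood where $\|K_g\|_{H^t\to H^t}<1$ may shrink as $t\to\infty$), so your claim that $\psi_g$ is \emph{smooth} does not follow from what you wrote; this is harmless for the statement, since $\psi_g\in H^s$ with $s>n$ is continuous and positive near $\psi_{FS}\equiv 1$, and a positive continuous density already defines a positive Radon measure satisfying \eqref{IGFforg} by the adjoint identity---but you should either drop the word ``smooth'' or supply a bootstrap (e.g.\ a parametrix for the elliptic FIO $R_g^*$, or the paper's normal-operator regularity argument).
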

 	\begin{proof}
 		Let $g$ be a Gauduchon metric. Generalizing the ideas from Section~\ref{section:ClassificationofGauduchonMetrics}, we define the \emph{Radon transform associated to $g$}:
 		\begin{align*}
 			\mathcal{R}_g : C^{\infty}(\mathbb{C}P^n) &\to C^{\infty}(\mathbb{C}P^n) \\
 			f &\mapsto \mathcal{R}_g(f)(\sigma) = \dashint_{\Sigma_\sigma} f  d\mathrm{A}_{g}.
 		\end{align*}
 		 We similarly define the dual Radon transform $\mathcal{R}^*_g : C^{\infty}(\mathbb{C}P^n) \to C^{\infty}(\mathbb{C}P^n)$. Which is well defined by means of the coarea formula applied to the double fibration~\eqref{doublefibration}. 
 		
 		We claim that if $g$ admits a positive function $\psi_g \in C^{\infty}(\mathbb{C}P^n)$ satisfying
 		\begin{equation}\label{dualRadonissurj}
 			\mathcal{R}^{*}_g(\psi_g) = 1,
 		\end{equation}
 		then the positive Radon measure $d\mu_g = \psi_g  d\mathrm{Vol}_g$ satisfies the integral geometric formula~\eqref{IGFforg}. Indeed, for any $f \in C^{\infty}(\mathbb{C}P^n)$:
 		\begin{align*}
 			\int_{\mathbb{C}P^n} \left( \dashint_{\Sigma_\sigma} f  d\mathrm{A}_g \right) d\mu_g 
 			&= \int_{\mathbb{C}P^n} \mathcal{R}_g(f)  \psi_g  d\mathrm{Vol}_g \\
 			&= \int_{\mathbb{C}P^n} f  \mathcal{R}^*_g(\psi_g)  d\mathrm{Vol}_g \\
 			&= \int_{\mathbb{C}P^n} f  d\mathrm{Vol}_g,
 		\end{align*}
 		Therefore, it remains to establish that in some $C^{\infty}$-neighborhood of the Fubini-Study metric, every metric admits a positive function $\psi_g$ satisfying~\eqref{dualRadonissurj}.  
 		
 	To establish its existence we prove surjectivity of the operator $\mathcal{T}_g \doteq \mathcal{R}_g^* \circ \mathcal{R}_g$. The double fibration~\eqref{doublefibration} induces bijections via the projections
 	\begin{equation*}\label{diffdoublefibration}
 		\begin{xy}\xymatrix{
 				& N\mathcal{I}-0 \ar[dl]_{\kappa} \ar[dr]^{\upsilon} & \\
 				T\mathbb{C}P^{n} & & T\mathbb{C}P^{n}
 		}\end{xy}
 	\end{equation*}
 	This allows us to apply the general theory of Radon transforms from \cite[Chapter~VI, \S6]{GuilleminGeoAsym}, which guarantees that $\mathcal{T}_g$ is an elliptic pseudodifferential operator of order $2-2n$. 
 	
 	Since $\mathbb{C}P^n$ is simply connected, for any integer $s > 0$ the operator $\mathcal{T}_g$ extends continuously to a Fredholm operator 
 	\[
 	\mathcal{T}_g: W^{s,2}(\mathbb{C}P^n,g_{FS}) \to W^{s+2n-2,2}(\mathbb{C}P^n,g_{FS})
 	\]
 	with index zero (see \cite[\S 3.2.4--3.2.5]{Egorov_Schulze}). Fix $s$ sufficiently large so that $W^{s,2}(\mathbb{C}P^n,g_{FS})$ embeds continuously into $C^{0}(\mathbb{C}P^n)$.

 	The Helgason's inversion formula \cite[Chapter 3, Theorem 2.2]{HelgasonBook1999} combined with elliptic regularity implies that $\mathcal{T}_{FS}$ is injective. Since it has index zero, $\mathcal{T}_{FS}$ is an isomorphism. Moreover, Eric T. Quinto's computation of the symbol of $\mathcal{T}_g$ in \cite{Quinto81} shows that the map
 	\[
 	g \mapsto \left\| \mathcal{T}_g - \mathcal{T}_{FS} \right\|_{\mathscr{L}(W^{s,2}, W^{s+2n-2,2})}
 	\]
 	is continuous on $\mathscr{G}(\mathbb{C}P^n)$ in the $C^{\infty}$-topology. Consequently, there exists a neighborhood $\mathcal{U}$ of $g_{FS}$ where $\mathcal{T}_g$ is surjective. 
 	
 	Therefore, for each $g \in \mathcal{U}$, there exists $\varphi_g \in W^{s,2}(\mathbb{C}P^n,g_{FS})$ satisfying 
 	\begin{equation}\label{Tphi=1}
 		\mathcal{T}_g(\varphi_g) = 1.
 	\end{equation}
 	Elliptic regularity applied to equation~\eqref{Tphi=1} implies that $\varphi_g$ is smooth, and by our choice of $s>0$ shrinking $\mathcal{U}$ if necessary, we ensure $\varphi_g > 0$. Defining $\psi_g \doteq \mathcal{R}_{g}(\varphi_g)$, we obtain a smooth positive function satisfying $\mathcal{R}^*_g(\psi_g) = 1$, which completes the proof.            
 	\end{proof}
 	
 	\begin{proof}[Proof of Theorem~\ref{maintheoremE}]
 		Let $\mathcal{U}$ be the neighborhood of the Fubini-Study metric obtained by intersecting the neighborhoods from Theorem~\ref{maintheoremA} and Proposition~\ref{geneGIF}. The proof of Item~\ref{mainthrm2itemb} follows immediately from Theorem~\ref{maintheoremA}, it remains to establish Item~\ref{mainthrm2itema}.
 		
 		Consider $g \in \Pi_{\mathscr{G}}^{-1}(\mathcal{U})$. By definition, there exists a smooth positive function $f \in C^{\infty}(\mathbb{C}P^n)$ such that $g = f\, \hat{g}$, for $\hat{g} \doteq \Pi_{\mathscr{G}}(g)$. Since $\hat{g} \in \mathcal{U}$, it admits a Radon measure satisfying the integral geometric formula~\eqref{IGFforg}. 
 		
 		Following the argument in \cite[Theorem A.2]{junior2023balanced}, we obtain:
 		\[
 		\frac{\inf_{\sigma \in \mathbb{C}P^n} \mathrm{Area}(\Sigma_\sigma, g)}{\mathrm{Vol}(g)^{\frac{n-1}{n}}} 
 		\leq 
 		\frac{\mathrm{Sys}^{\mathrm{Hol}}_{n-1}(\hat{g})}{\mathrm{Vol}(\hat{g})^{\frac{n-1}{n}}}.
 		\]
 		As each $\Sigma_\sigma$ is a homologically non-trivial holomorphic submanifold, we conclude $\rho(g) \leq \rho(\hat{g})$, which completes the proof of Item~\ref{mainthrm2itema}.  
 	\end{proof} 
 
 	 Now, we move to the proof of Corollary~\ref{maintheoremB}, which is a consequence of Theorem~\ref{maintheoremA}, Corollary~\ref{decompTGinKandB}, and Gromov-Berger's result on the local maximality of the Fubini-Study metric for the normalized 2-systole functional on $\mathbb{C}P^2$ (see \cite[Section 0.2.B]{gromov_pshol}). 
 	\begin{proof}[Proof of Corollary~\ref{maintheoremB}]
 		By Gromov-Berger Theorem there exist a neighborhood $\mathcal{U}' \subset \mathscr{G}(\mathbb{C}P^2)$ of the  Fubini-Study metric, such that, within this set $g_{FS}$ is a maximum for the normalized $2$-systole. Now, let $\mathcal{U}'' \subset  \mathscr{G}(\mathbb{C}P^2)$ be the neighborhood of $g_{FS}$ given by Theorem~\ref{maintheoremA}. We claim that the set $\mathcal{U} = \mathcal{U}'  \cap \mathcal{U}''$ satisfies the desired property. 
 		
 		In fact, let $g \in \mathcal{U}$ and assume that the $2$-systole is achieved by a holomorphic $1$-chain. Therefore, by the construction of the set $\mathcal{U}$ we have the following string of inequalities:    
 		$$\frac{\mathrm{Sys}_{2}(g)}{\mathrm{Vol}(g)^{\frac{1}{2}}}\leq \frac{\mathrm{Sys}_{2}(g_{FS})}{\mathrm{Vol}(g_{FS})^{\frac{1}{2}}} \leq \frac{\mathrm{Sys}^{\mathrm{Hol}}_{1}(g)}{\mathrm{Vol}(g)^{\frac{1}{2}}}\leq \frac{\mathrm{Sys}_{2}(g)}{\mathrm{Vol}(g)^{\frac{1}{2}}}. $$
 		Consequently, $\rho(g)=\rho(g_{FS})$. Then by the rigidity of Theorem~\ref{maintheoremA} the Gauduchon metric $g$ is Kähler as desired.   
 		
 		Moreover, fix $\beta \in \Omega^{1}(\mathbb{C}P^2)$ with $\delta\beta = 0$. Following the argument in Theorem~\ref{T_omegaG}, we may assume $\delta^c\beta = 0$. Define the $(1,1)$-form $\eta = dJ\beta + d^c\beta$ and the symmetric tensor $h \doteq \eta(J\cdot, \cdot)$. Hence, by Corollary~\ref{decompTGinKandB} the family of Gauduchon metrics $g_t \doteq g_{FS} + th$ is non-Kähler. Consequently, its $2$-systole cannot by realized by a holomorphic chain.               
 	\end{proof}
 
 	We conclude this section by presenting the proof of Corollary~\ref{maintheoremC}, that follows from a combination of the previous proof and the existence of minimal branched immersions $2$-spheres provided by the seminal work of J. Sacks and K. Uhlenbeck~\cite{Sacks_Uhlenbeck81}.   
 	\begin{proof}[Proof of Corollary~\ref{maintheoremC}]
 		In the subsection \textit{Small perturbations of $\mathbb{C}P^2$} of~\cite[\S4.A.2]{gromov_systole}, M. Gromov establishes the existence of a $C^{\infty}$-neighborhood $\mathcal{U}' \subset \mathrm{Riem}(\mathbb{C}P^2)$ of the Fubini-Study metric such that for every $g \in \mathcal{U}'$, there exists a smooth embedding $j: \mathbb{C}P^1 \to \mathbb{C}P^2$ homologous to a linear equator, satisfying:  
 		\begin{equation}\label{isosysforj}
 			\frac{\mathrm{Area}(\mathbb{C}P^1,j^*g)}{\mathrm{Vol}(g)^{1/2}} \leq \frac{\mathrm{Sys}_2(g_{FS})}{\mathrm{Vol}(g_{FS})^{1/2}}.
 		\end{equation}  
 		
 		Meanwhile, Theorem~\ref{maintheoremA} yields a neighborhood $\mathcal{U}'' \subset \mathscr{G}(\mathbb{C}P^2)$ where $\rho(g) > \rho(g_{FS})$ for all non-Kähler metric $g \in \mathcal{U}''$, moreover we can assume that every metric in the neighborhood $\mathcal{U}''$ has positive sectional curvature. We claim that every non-Kähler metric $g \in \mathcal{U} \doteq \mathcal{U}' \cap \mathcal{U}''$ admits a stable minimal conformal branched immersion $f_g: \mathbb{C}P^1 \to (\mathbb{C}P^2,g)$ that is neither holomorphic nor anti-holomorphic.  
 		
 		In fact, fix such $g$ and let $j$ be as before. Once that $\mathbb{C}P^2$ is simple connected the Hurewicz's Theorem implies that $\pi_2(\mathbb{C}P^2)$ and $H_2(\mathbb{C}P^2,\mathbb{Z})$ are isomorphic. Moreover, since any linear equator generates $H_2(\mathbb{C}P^2,\mathbb{Z})$ we can conclude that $j$ generated $\pi_2(\mathbb{C}P^2)$. Therefore, by~\cite[Theorem 5.9]{Sacks_Uhlenbeck81}, there exists a stable minimal conformal branched immersion $f_g: \mathbb{C}P^1 \to \mathbb{C}P^2$, such that: 
 		$$\mathrm{Area}(\mathbb{C}P^1,f_g^*g) \leq \mathrm{Area}(\mathbb{C}P^1,j^*g).$$		
 		Inequality~\eqref{isosysforj} then gives:  
 		\begin{equation}\label{mainBeq1}
 			\frac{\mathrm{Area}(\mathbb{C}P^1,f_g^*g)}{\mathrm{Vol}(g)^{1/2}} \leq \frac{\mathrm{Sys}_2(g_{FS})}{\mathrm{Vol}(g_{FS})^{1/2}}.
 		\end{equation}
 	 
 		It remain to show that $f_g$ is neither holomorphic or anti-holomorphic. Suppose by contradiction that was the case, the \emph{Proper Mapping Theorem}~\cite[p.395]{Griffiths_Harris94} would imply that $f_g(\mathbb{C}P^1)$ is holomorphic $1$-chain, and subsequently:  
 		\begin{equation}\label{mainBeq2}
 			\mathrm{Sys}_1^{\mathrm{Hol}}(g)\leq \mathrm{Area}(\mathbb{C}P^1,f_g^*g).
 		\end{equation}
 
 		Finally, inequalities~\eqref{mainBeq1} and~\eqref{mainBeq2} together with Theorem~\ref{maintheoremA} would then imply:  
 		\[
 		\frac{\mathrm{Area}(\mathbb{C}P^1,f_g^*g)}{\mathrm{Vol}(g)^{1/2}} \leq \frac{\mathrm{Sys}_2(g_{FS})}{\mathrm{Vol}(g_{FS})^{1/2}} < \frac{\mathrm{Sys}_1^{\mathrm{Hol}}(g)}{\mathrm{Vol}(g)^{1/2}} \leq \frac{\mathrm{Area}(\mathbb{C}P^1,f_g^*g)}{\mathrm{Vol}(g)^{1/2}},
 		\] 		          
 		leading to our desired contradiction.
  	\end{proof}
 
 	\bibliographystyle{halpha}
	\bibliography{refs}
\end{document}